\def\theequation{\thesection.\@arabic \c@equation}
\def\@citecolor{blue}
\def\@linkcolor{blue}
\def\@urlcolor{blue}
\def\@citecolor{blue}
\def\@linkcolor{blue}
\def\@urlcolor{blue}
\def\theequation{\arabic{equation}}
\def\theequation{\thesection.\arabic{equation}}
\numberwithin{equation}{section}
\def\ann{\operatorname{Ann}}
\def\ass{\operatorname{Ass}}
\def\deg{\operatorname{deg}}
\def\depth{\operatorname{depth}}
\def\dim{\operatorname{dim}}
\def\height{\operatorname{ht}}
\def\ker{\operatorname{ker}}
\def\reg{\operatorname{reg}}
\newcommand{\kk}{\Bbbk}
\def\lrar{{\longrightarrow}}
\def\P{\mathbb P}
\def\F{\mathcal F}
\def\R{\mathcal R}
\def\I{\mathcal I}
\newcommand{\ncom}{\newcommand}
\ncom{\bq}{\begin{equation}}
\ncom{\eq}{\end{equation}}
\ncom{\beqn}{\begin{eqnarray*}}
\ncom{\eeqn}{\end{eqnarray*}}
\ncom{\beq}{\begin{eqnarray}}
\ncom{\eeq}{\end{eqnarray}}
\ncom{\been}{\begin{enumerate}}
\ncom{\eeen}{\end{enumerate}}
\ncom{\olin}{\overline}
\ncom{\f}{\frac}
\ncom{\rar}{\rightarrow}
\def\nno{\nonumber}
\newcommand{\p}{\mathfrak p}
\newcommand{\m}{\mathfrak m}
\theoremstyle{plain}
\newtheorem{theorem}[equation]{Theorem}
\newtheorem{corollary}[equation]{Corollary}
\newtheorem{proposition}[equation]{Proposition}
\newtheorem{lemma}[equation]{Lemma}
\theoremstyle{definition}
\def\sta{\stackrel}
\ncom{\bib}{\bibitem}
\ncom{\limnn}{\underset{\underset{}{n \longrightarrow \infty}}{\lim}}
\ncom{\limns}{\underset{\underset{}{s \longrightarrow \infty}}{\lim}}
\ncom{\maxi}{\underset{\underset{}{i}}{\max}}
\ncom{\limm}{\underset{{ n \to \infty}}{\lim}}
\ncom{\Tprime}{T^{\prime}}
\ncom{\mprime}{\m^{\prime}}
\def\z{{\bf z}}
\begin{document}
 \title[ ] {  Symbolic blowup algebras and invariants associated to certain monomial curves in $\P^3$  }
 \author{Clare D'Cruz$^*$}
 \address{Chennai Mathematical Institute, Plot H1 SIPCOT IT Park, Siruseri, 
Kelambakkam 603103, Tamil Nadu, 
India
} 
\email{clare@cmi.ac.in} 

\author[Mousumi Mandal] {Mousumi Mandal$^\dag$}
\address{Department of Mathematics, Indian Institute of Technology Kharagpur, 721302, India}
\email{mousumi@maths.iitkgp.ac.in}

\keywords{Symbolic  Rees algebra, resurgence, Waldschmidt  constant, Castelnuovo-Mumford regularity}
\thanks{$^*$ partially supported by a grant from Infosys Foundation}
 \thanks{$^\dag$ Supported by ISIRD(IIT KGP) grant No.: IIT/SRIC/MA/ANL/2017-18/45, India}
 \subjclass[2010]{Primary: 13A30, 1305, 13H15, 13P10} 

\begin{abstract}
 In this paper we explicitly describe the symbolic powers of the ideal defining the  curve ${\mathcal C}(q,m)$ in $\P^3$  parametrized by 
 $(  x^{d+2m}, x^{d+m} y^m, x^{d} y^{2m}, y^{d+2m})$, where  $q,m$ are positive integers,  $d=2q+1$ and   $\gcd(d,m)=1$. 
 We show that the symbolic blowup algebra is Noetherian and Gorenstein. An explicit  formula for the resurgence and the Waldschmidt constant of the prime ideal $\p:=\p_{ {  \mathcal C}(q,m) }$ defining the curve ${\mathcal C}(q,m)$ is computed.  We also give a formula for the Castelnuovo-Mumford regularity of the symbolic powers $\p^{(n)}$ for all $n \geq 1$. 
  \end{abstract}

 \maketitle
  \section{Introduction}
 
  Let $\kk$ be a field, $A= \kk[x_1, \ldots, x_t]$  a polynomial ring and $I$  a homogeneous ideal in $A$ with no embedded components. For every $n 
 \geq 1$, the $n$-th 
  symbolic power of $I$ is defined as $I^{(n)} := \displaystyle{\bigcap_{\p \in \ass(A/I) } (I^n A_{\p}\cap A)}$. By a classical result of Zariski and Nagata  $n$-th symbolic power of a given prime ideal consists of the elements that vanish up to order $n$ on the corresponding variety.
    However, describing the generators of symbolic powers is not easy.
One can verify  that $I^n\subseteq I^{(n)}$ and in fact for $0  \not= I\subset A,$ $I^r\subseteq I^{(n)}$ holds if and only if $r \geq n$.  
It is a challenging problem to determine for which $n$ and $r$ the containment  $I^{(n)}\subseteq I^r$ holds true. The
    results in \cite {ein-laz-smith} and \cite{ {hochster-huneke}} show that $I^{(n)}\subseteq I^r$ for $n\geq (t-1)r$. In the direction of comparing the symbolic powers and ordinary powers of ideals,  B.~Harbourne raised the following conjecture in \cite[Conjecture~8.4.3]{BDHKKASS}:   For any homogeneous ideal $I \subset A$, $I^{(n)} \subseteq I^r $ if $n \geq r(t-1) - (t-2)$. 
It is of interest to study the least integer $n$ for which $I^{(n)}\subseteq I^r$ holds for a given ideal $I$ and  for an integer $r$. To answer this question C. Bocci and B. Harbourne  defined an asymptotic quantity called resurgence which is defined as $\rho(I) = \sup\{ m/r \mid 
  I^{(m)}  \not \subseteq I^r\}$ (see \cite{BH}). From the results in \cite {ein-laz-smith} and \cite{ {hochster-huneke}} it follows that this quantity exists for radical ideals.
In fact,   $1\leq \rho(I)\leq t-1$ (see \cite{BH}).   Since it is hard to compute the exact value of resurgence, in the same paper \cite{BH},  they  defined another invariant 
   was first introduced by Waldschmidt  in \cite{waldschmidt}. They call this invariant the Waldschmidt constant and denote it by $\gamma(I)$ in \cite{BH}. 
This invariant is defined as
  ${\gamma(I) }= \displaystyle{\lim_{n\rightarrow \infty}}\frac{\alpha(I^{(n)}) }{n}$, where  $ \alpha(I)$ denotes  the least degree of a homogeneous generator of $I$. They showed that if $I$ is an homogeneous ideal, then $\alpha(I) / {\gamma}(I) \leq \rho(I)$ and in addition if $I$ is a  zero dimensional subscheme  in a projective space, then
   $\alpha(I) / {\gamma}(I) \leq \rho(I) \leq \reg (I) / {\gamma}(I)$, where $\reg(I)$ denotes the Castelnuovo-Mumford regularity \cite[Theorem~1.2.1]{BH}. Hence, if  $\alpha(I)=\reg(I)$, then $\rho(I)=\alpha(I)/{\gamma}(I)$.
   Later,  in \cite[Conjecture~2.1]{harb-huneke} Harbourne and Huneke raised the following Conjecture: Let $I$ be an ideal of fat points  in $A$ and $\m = (x_1, \ldots, x_t)$. Then   $I^{ (n(t-1) )  } \subseteq \m^{n(t-2)} I^n$ holds true for all $I$ and  $n$. In the same paper they  showed that the conjecture is true for fat point ideals arising as
  symbolic powers of radical ideals generated in a single degree in  $\P^2$.  

 The resurgence  and the Waldschmidt constant has been studied in a few cases: for certain general points in $\P^2$  \cite{BH0},  smooth subschemes  \cite{guardo}, fat linear subspaces 
 \cite{fatabbi},  special point configurations \cite{duminicki} and monomial ideals \cite{bocco-waldschmidt}. The behaviour of  Castelnuovo-Mumford regularity of symbolic powers  is not easy to predict. From a 
  result of Cutkosky, Herzog and Trung, it follows that if $I$ is an ideal of points in a projective space and the symbolic Rees algebra $\displaystyle{\bigoplus_{n \geq 0} } I^{(n)}$ is Noetherian, then  
  $\reg(I^{(n)})$ is a quasi-polynomial  (\cite[Theorem~4.3]{cutkosky1}). Moreover, ${\displaystyle \lim_{n\rightarrow \infty}\left( \f{\reg(I^{(n)})}{n} \right)}$ exists and can even be irrational  \cite{cutkosky2}.

Though there are several results available for the the resurgence, the Waldschmidt constant and the Castelnuovo-Mumford regularity of symbolic powers, there is no precise result for monomial curves in a projective space. Though it is well known that $\p^{(2)}/\p^2$ is a cyclic module (for example see \cite[Lemma~2.1]{morales-simis}), the explicit description of the generator has been crucial in our study of the various invariants. 
  In this paper we focus on  the ideal defining the monomial curve  ${\mathcal C}(q,m)$ in $\P^3$  parametrized by 
 $(  x^{d+2m}, x^{d+m} y^m, x^{d} y^{2m}, y^{d+2m})$, where  $q,m$ are positive integers, $d=2q+1$ with $\gcd(d,m)=1$. 
 
  Another  topic of interest is the Gorenstein property of the symbolic blowup algebras. If $q=1$, i.e., $d=3$, then the monomial curves we consider coincide with the curves in \cite[Theorem~3.2(v)(2)(a)]{morales-simis}.  However, our  proof here is different. 
We use   properties of   monomial ideals to obtain our results. These computations are also useful in computing the Castelnuovo-Mumford regularity of the symbolic powers. The Gorenstein property of monomial curves in $\P^3$ have also been studied by Schenzel in \cite{schenzel}. However, their  curves do not overlap with  the  monomial curves we consider in this paper.

 We now briefly describe the contents of  our paper. Let $\phi: R = \kk[x_1, x_2, x_3, x_4]\lrar S = \kk[x,y]$ be a homomorphism  given by
  $\phi(x_1) = x^{d+2m}$,  $\phi(x_2) = x^{d+m} y^{m}$,    $\phi(x_3) = x^{d} y^{2m}$    and $\phi(x_4) = y^{d+2m}$.    For the rest of this paper $\p := \p_{ {  \mathcal C}(q,m) } := \ker~\phi$. In Section~2, we prove a few preliminary results. 
  In Section~3, we prove some results on monomial ideals which will be used in the subsequent sections.  
In Section~4, we explicitly describe the generators of $\p^{(n)}$ (Theorem~\ref{symbolic power}) and show that the symbolic Rees algebra $\displaystyle{\bigoplus_{n \geq 0} \p^{(n)}}$ is Noetherian. As computing symbolic powers is not easy we use a simple trick. We consider the ring  $T = R/ (x_1, x_4)$.  Then  $\p T$ is a monomial ideal and eventually  for all
 $n \geq 1 $, $\p^{(n)}T$ is a monomial ideal (Proposition~\ref{main theorem 1}).  In Section~5, we show that $\R_s(\p)$ is Gorenstein. Moreover, the symbolic fiber cone ${\mathcal F}_s(\p) := \R(\p) \otimes_R R/\m = \displaystyle{\bigoplus_{n \geq 0} \p^{(n)}  /  \m \p^{(n)}}$ is Cohen-Macaulay (Theorem~\ref{cm-rees}). 
 
Section 6 is devoted to study certain invariants associated to   $\p$, namely the resurgence, the Waldschmidt constant and the Castelnuovo-Mumford regularity. 
    We verify that  Conjecture~8.4.3 in \cite{BDHKKASS}  and Conjecture~2.1 in \cite{harb-huneke} is true for 
   $\p$ (Corollary~\ref{corollary containment 1}, Corollary~\ref{corollary containment 1}). We express  the resurgence for the monomial curve  ${\mathcal C}(q,m)$  in terms of the degree of the curve ${  \mathcal C}(q,m)$. In particular we show that  
   ${\displaystyle \rho(\p) = \f{e(R/ \p) -1}{ e(R / \p) -2}}$,  where $e(R/ \p)$ is the degree of ${ \mathcal C}(q,m)$ (Theorem~\ref{theorem-resurgence}). 
The Waldschmidt constant is  calculated for the same (Theorem~\ref{theorem-waldschmidt}).  
 We next give an explicit formula for  the  Castelnuovo-Mumford regularity for all the symbolic powers  $\p^{(n)}$ and show that it is a quasi-polynomial (Theorem~\ref{theorem-regularity}). As a consequence we show that ${\displaystyle \limnn \reg\left( \f{R}{ \p^{(n) } } \right) = \f{e(R/ \p) }{2}}$ (Corollary~\ref{limit regularity}). 
We end this paper by  comparing all these invariants and show that there exist monomial curves for which Theorem 1.2.1(b)  in \cite{BH} may not  hold true (Lemma~\ref{negative result}).

 \section{Basic results}
  \label{prelim}
 In this section we prove several results which are probably well known in literature. We provide proofs for the sake of convenience. 
  For the rest of  this paper $q$, $m$ and $d$ are as in the introduction and $\p :=  \p_{{  \mathcal C}(q,m)} \subseteq R$. 
  
 It is well known that the generators for $\p$  are the $2 \times 2$ minors of the matrix 
$
{\displaystyle
\begin{pmatrix}
  x_1   &  x_2 &  x_3^{q+m}\\
   x_2   &  x_3  &  x_1^q x_4^m
\end{pmatrix}
}$ \cite{morales}. 
In particular, if  
\beq
\label{defining equations}
  g_1 =  x_1^q x_2  x_4^m - x_3^{q+m+1} ,    \hspace{.2in}
 g_2 = x_1^{q+1} x_4^{m} - x_2  x_3^{q+m}   \hspace{.2in}
 \mbox{ and } 
 g_3 =  x_1x_3 - x_2^2   
 \eeq  then 
  $\p = (g_1, g_2, g_3)$.

\begin{lemma}
\label{cohen-macaulay}
  \been
     \item 
      \label{cohen-macaulay-1}
  $R/ \p$ is Cohen-Macaulay. In particular $x_1, x_4$ is a  regular sequence in $R/ \p$.
  
  \item
   \label{corollary multiplicity}
$
    {\displaystyle e   \left( (x_1,x_4); \f{R}{\p}\right)= 2(q+m) + 1 = d+2m.}
   $
    \eeen
  \end{lemma}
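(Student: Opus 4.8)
The plan is to read both statements off the determinantal presentation of $\p$. Since $\p$ is the homogeneous ideal of the curve $\mathcal{C}(q,m) \subseteq \P^3$, we have $\dim(R/\p) = 2$ and hence $\height \p = 4 - 2 = 2$. On the other hand $\p$ is generated by the $2 \times 2$ minors $g_1, g_2, g_3$ of the displayed $2 \times 3$ matrix, and $2 = (2-2+1)(3-2+1)$ is exactly the generic grade of the ideal of maximal minors of such a matrix. Thus $\p$ attains the maximal grade and is a perfect ideal of grade $2$; by the Hilbert--Burch theorem the Eagon--Northcott complex
\[
0 \lrar R^2 \lrar R^3 \xrightarrow{(g_1,\,g_2,\,g_3)} R \lrar R/\p \lrar 0,
\]
whose left-hand map is the transpose of the presenting matrix, is a minimal free resolution. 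Consequently $\projdim(R/\p) = 2$, and the Auslander--Buchsbaum formula gives $\depth(R/\p) = 4 - 2 = 2 = \dim(R/\p)$, so $R/\p$ is Cohen--Macaulay.

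For the regular-sequence assertion in part (1) I would first check that $x_1, x_4$ form a system of parameters and then invoke Cohen--Macaulayness, under which every system of parameters is a regular sequence. The crucial step is to reduce the three generators modulo $(x_1,x_4)$: since $q, m \geq 1$, the terms $x_1^q x_2 x_4^m$, $x_1^{q+1} x_4^m$ and $x_1 x_3$ all vanish, so
\[
\p + (x_1,x_4) = \big(x_1,\ x_4,\ x_3^{q+m+1},\ x_2 x_3^{q+m},\ x_2^2\big),
\]
whence $R/(\p + (x_1,x_4)) \cong \kk[x_2,x_3]/(x_3^{q+m+1},\, x_2 x_3^{q+m},\, x_2^2)$. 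This is a monomial quotient of finite $\kk$-dimension, so $(x_1,x_4)$ cuts $R/\p$ down to dimension $0$; thus $x_1, x_4$ is a system of parameters and, by the Cohen--Macaulay property just established, a regular sequence.

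The same reduction settles part (2). Since $x_1, x_4$ is a regular sequence in the Cohen--Macaulay ring $R/\p$, the parameter ideal $(x_1,x_4)$ is generated by a maximal $R/\p$-regular sequence, and for such an ideal the Hilbert--Samuel multiplicity coincides with the colength:
\[
e\!\left((x_1,x_4);\, \f{R}{\p}\right) = \ell\!\left( \f{R}{\p + (x_1,x_4)} \right) = \ell\!\left( \f{\kk[x_2,x_3]}{(x_3^{q+m+1},\, x_2 x_3^{q+m},\, x_2^2)} \right).
\]
It then remains to count the standard monomials $x_2^a x_3^b$ of the last quotient: the relation $x_2^2$ forces $a \in \{0,1\}$, with $0 \leq b \leq q+m$ allowed when $a = 0$ and $0 \leq b \leq q+m-1$ allowed when $a = 1$. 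This gives $(q+m+1) + (q+m) = 2(q+m)+1 = d+2m$ monomials, which is the claimed multiplicity.

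The conceptual heart is simply recognizing the determinantal structure, after which Cohen--Macaulayness and the regular-sequence property are formal. I expect the only genuine computation to be the reduction modulo $(x_1,x_4)$ together with the monomial count; the mild point needing care is verifying that the mixed monomials in $g_1, g_2$ really drop out, which relies on $q \geq 1$ and $m \geq 1$ and is exactly what produces the clean value $2(q+m)+1$.
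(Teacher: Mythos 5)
Your argument is correct and follows essentially the same route as the paper: Cohen--Macaulayness via the Hilbert--Burch resolution of the grade-two determinantal ideal, the regular-sequence claim from ``system of parameters in a Cohen--Macaulay ring,'' and the multiplicity as the colength of $\p+(x_1,x_4)$. Your explicit reduction $\p+(x_1,x_4)=(x_1,x_4,x_2^2,x_2x_3^{q+m},x_3^{q+m+1})$ and the resulting monomial count are in fact more careful than the paper's displayed intermediate ideal (which appears to contain a typo), though both yield $2(q+m)+1$.
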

  \begin{proof} 
  
(\ref{cohen-macaulay-1})  From the Hilbert-Burch theorem it follows that the minimal  free resolution of $\p$ is of the form
  \beq
  \label{minimal free resolution}
  0  
\lrar R[-(q+m+2)]^2  
\sta{\phi}{\lrar} R[-(q+m+1)]^2 \oplus R[-2]   
\sta{\psi} {\lrar}  \p 
 \lrar 0
  \eeq
 where 
 $
 {\displaystyle
\phi = \begin{pmatrix}
x_2 & x_1 \\
-x_3 & -x_2\\
x_1^qx_4^m & x_3^{q+m}
\end{pmatrix}
 }$ 
 and $
 {\displaystyle
\psi = \begin{pmatrix}
g_1 & g_2 & g_3\\
  \end{pmatrix}
 }$.
 Hence $\depth(R/ \p) =2=\dim(R/ \p) =2$. This implies that  $R/ \p$ is Cohen-Macaulay. As $x_1, x_4$ is a system of parameters for $R/\p$, it is a regular sequence by \cite[Corollary~11.12]{hio}. 
 
 (\ref{corollary multiplicity})
Since $x_1, x_4$ is a regular sequence, 
 \beqn
 e   \left( (x_1,x_4); \f{R}{\p}\right)
 = \ell \left(   \f{R} {(\p + ( x_1, x_4))}\right)
 = \ell \left( \f{R}{(x_1, x_4, x_2^2, x_3^{2(q+m)+1})}\right)
  =  2(q+m)+1 .
  \eeqn
\end{proof}

Put
  \beq
  \label{definition of f2}
 f 
  &:=& 
  x_3^{q+m} g_1 - x_1^{q} x_4^m g_2 + x_1^{q-1}  x_3^{q+m-1}x_2 x_4^m g_3\\
&=&   - x_3^{2(q+m)+1} 
  -  x_1^{q-1}  x_2^3 x_3^{q+m-1}  x_4^{m} 
  +  3 x_1^{q}   x_2  x_3^{q+m} x_4^{m}
   - x_1^{2q+1}  x_4^{2m}. 
   \eeq

\begin{lemma}
\label{lemma on f_2}
    \been
 \item \label{lemma on f_2-1}
 For  $i=1,2,3$, $x_i f \in \p^2$.
 \item    \label{lemma on f_2-2}
   $f \in \p^{(2)}$.
 \item \label{lemma on f_2-3}
 For all  $n = 1, \ldots, q+m+1$, $f^n \in \p^{2n-1}$.  
 \eeen
\end{lemma}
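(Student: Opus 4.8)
I would prove the three parts in order, using each for the next; the substance is in part~(\ref{lemma on f_2-3}). For part~(\ref{lemma on f_2-1}) I would start from the two Hilbert--Burch syzygies read off the columns of $\phi$ in \eqref{minimal free resolution}, namely $x_2g_1-x_3g_2+x_1^qx_4^mg_3=0$ and $x_1g_1-x_2g_2+x_3^{q+m}g_3=0$. Multiplying $f$ by $x_i$ in the grouped form \eqref{definition of f2} and repeatedly substituting these syzygies together with the defining relations \eqref{defining equations} to trade a high power such as $x_3^{q+m+1}$ or a square $x_2^2$ for a generator, I expect to reach the closed identities
\[
x_1f=x_3^{q+m-1}g_1g_3-g_2^2,\qquad x_2f=-g_1g_2-x_1^{q-1}x_3^{q+m-1}x_4^mg_3^2,\qquad x_3f=-g_1^2-x_1^{q-1}x_4^mg_2g_3,
\]
each of which is visibly an $R$-combination of products $g_ig_j$ and hence lies in $\p^2$; this is part~(\ref{lemma on f_2-1}). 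For part~(\ref{lemma on f_2-2}) I would note that $\phi(x_1)=x^{d+2m}\neq0$, so $x_1\notin\p$ and $x_1$ is a unit in $R_\p$; dividing the first identity by $x_1$ then gives $f\in\p^2R_\p\cap R=\p^{(2)}$ (using $\ass(R/\p)=\{\p\}$).

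For part~(\ref{lemma on f_2-3}) the plan is induction on $n$, the base case $n=1$ being immediate from $f\in\p$. Assume $f^{n-1}\in\p^{2n-3}$. The first ingredient comes from multiplying the identities of part~(\ref{lemma on f_2-1}) by $f^{n-1}$: since $x_if\in\p^2$ and $f^{n-1}\in\p^{2n-3}$, one gets $x_if^{\,n}\in\p^{2}\p^{2n-3}=\p^{2n-1}$ for $i=1,2,3$. This, however, is not yet $f^{\,n}\in\p^{2n-1}$, and it is the removal of the spurious factor $x_i$ that both forces the numerical bound and constitutes the real work.

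The decisive ingredient is reduction modulo the regular sequence $x_1,x_4$ of Lemma~\ref{cohen-macaulay}. In $T=R/(x_1,x_4)=\kk[x_2,x_3]$ the relations \eqref{defining equations} become $\bar g_1=-x_3^{q+m+1}$, $\bar g_2=-x_2x_3^{q+m}$, $\bar g_3=-x_2^2$, so $\p T=(x_3^{q+m+1},x_2x_3^{q+m},x_2^2)$ is monomial; moreover every monomial of \eqref{definition of f2} other than $-x_3^{2(q+m)+1}$ carries a factor of $x_1$ or $x_4$, whence $\bar f=-x_3^{2(q+m)+1}$ and $\bar f^{\,n}=(-1)^nx_3^{\,n(2(q+m)+1)}$. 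Now a generator of $(\p T)^{2n-1}$ is a monomial $x_2^{\,b+2c}x_3^{\,a(q+m+1)+b(q+m)}$ with $a+b+c=2n-1$, and it is a pure power of $x_3$ only when $b=c=0$; hence the smallest pure power of $x_3$ in $(\p T)^{2n-1}$ is $x_3^{(q+m+1)(2n-1)}$. Therefore $\bar f^{\,n}\in(\p T)^{2n-1}$ precisely when $n(2(q+m)+1)\ge(q+m+1)(2n-1)$, an inequality that simplifies exactly to $(q+m+1)-n\ge0$. This is the origin of the range $n=1,\dots,q+m+1$, and translating back it shows $f^{\,n}\in\p^{2n-1}+(x_1,x_4)$.

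The final step, and the one I expect to be the main obstacle, is to upgrade the congruence $f^{\,n}\in\p^{2n-1}+(x_1,x_4)$ to the genuine membership $f^{\,n}\in\p^{2n-1}$. Reduction modulo $(x_1,x_4)$ is lossy, so this cannot follow from the $T$-computation alone and must exploit the special position of $f^{\,n}$. I would combine the three facts now in hand: $f^{\,n}\in\p^{(2n)}\subseteq\p^{(2n-1)}$ from part~(\ref{lemma on f_2-2}), the relations $x_if^{\,n}\in\p^{2n-1}$ for $i=1,2,3$ from the first ingredient, and $f^{\,n}\in\p^{2n-1}+(x_1,x_4)$. Writing the primary decomposition $\p^{2n-1}=\p^{(2n-1)}\cap Q$ with $Q$ the (at worst $\m$-primary) embedded component, the first fact places $f^{\,n}$ in the $\p$-primary part, and the regular-sequence property of $x_1,x_4$ together with the remaining two facts should force $f^{\,n}$ into $Q$ as well. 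Controlling this embedded component—i.e.\ showing that what survives after killing $x_1,x_4$ detects membership in $\p^{2n-1}$ exactly in the range $n\le q+m+1$—is the delicate point; the supporting binomial expansions and primary-decomposition bookkeeping are routine once this mechanism is in place.
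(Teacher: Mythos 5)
Parts~(\ref{lemma on f_2-1}) and~(\ref{lemma on f_2-2}) of your proposal match the paper: you reach the same three identities $x_1f=x_3^{q+m-1}g_1g_3-g_2^2$, $x_2f=-g_1g_2-x_1^{q-1}x_3^{q+m-1}x_4^mg_3^2$, $x_3f=-g_1^2-x_1^{q-1}x_4^mg_2g_3$, and the deduction of $f\in\p^{(2)}$ from $x_1f\in\p^2$ and $x_1\notin\p$ is exactly the paper's.

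For part~(\ref{lemma on f_2-3}) there is a genuine gap, and you have located it yourself: the membership $f^n\in\p^{2n-1}$ is never proved. Your reduction modulo $(x_1,x_4)$ only yields $f^n\in\p^{2n-1}+(x_1,x_4)$; this is a \emph{necessary} condition (and your monomial computation does nicely explain why the bound $n\le q+m+1$ is sharp, since $\bar f^{\,n}\notin(\p T)^{2n-1}$ for larger $n$), but it is nowhere near sufficient. The three facts you propose to combine do not close the gap: $x_if^n\in\p^{2n-1}$ for $i=1,2,3$ gives only $f^n\in(\p^{2n-1}:(x_1,x_2,x_3))$, i.e.\ it controls saturation with respect to the height-three prime $(x_1,x_2,x_3)$ rather than with respect to $\m$; since $x_4$ is absent, this does not force $f^n$ into an $\m$-primary embedded component of $\p^{2n-1}$, and adding $f^n\in\p^{(2n-1)}$ and $f^n\in\p^{2n-1}+(x_1,x_4)$ still leaves the membership undetermined. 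Carrying out your plan would require an explicit primary decomposition (or at least the set $\ass(R/\p^{2n-1})$), which is not established anywhere and is precisely the hard part.

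The paper's argument avoids all of this and is essentially one line: write $f^n=f\cdot f^{n-1}$ and expand the single visible factor of $f$ as $x_3^{q+m}g_1-x_1^qx_4^mg_2+x_1^{q-1}x_3^{q+m-1}x_2x_4^mg_3$, so that $f^n$ is an $R$-combination of $g_1,g_2,g_3$ with coefficients $x_3^{q+m}f^{n-1}$, $x_1^qx_4^mf^{n-1}$ and $x_1^{q-1}x_3^{q+m-1}x_2x_4^mf^{n-1}$. Each coefficient is then placed in $\p^{2(n-1)}$ by pairing the $n-1$ copies of $f$ with variables $x_i$ ($i=1,2,3$) taken from the monomial coefficient and invoking part~(\ref{lemma on f_2-1}); for instance $x_3^{q+m}f^{n-1}=x_3^{q+m-(n-1)}(x_3f)^{n-1}\in\p^{2(n-1)}$, which is exactly where the hypothesis $n-1\le q+m$ enters. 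This yields $f^n\in\p^{2(n-1)}\p=\p^{2n-1}$ directly, with no saturation or embedded-component analysis. To repair your proof you should replace the ``upgrade'' step by such an explicit expression of $f^n$ inside $\p^{2n-1}$; the congruence modulo $(x_1,x_4)$ can then be kept, if you wish, only as the argument that the range $n\le q+m+1$ cannot be improved.
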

\begin{proof}
(\ref{lemma on f_2-1})
As $g_i \in \p$ for $i=1,2,3$, 
\beq \nno
\label{g_i and f_2}
x_1 f &=& x_3^{q+m-1} g_1 g_3 - g_2^2 \in \p^2\\ \nno
x_2 f &=& -x_1^{q-1}x_3^{q+m-1} x_4^m g_3^2 - g_1 g_2 \in \p^2\\
x_3f &=&- x_1^{q-1} x_4^m g_2 g_3 - g_1^2 \in \p^2.
\eeq

(\ref{lemma on f_2-2}) From (1) it follows that $x_1 f \in \p^2 \subseteq \p^{(2)}$. As $x_1  \not \in \p$,   $f \in \p^{(2)}$. 

(\ref{lemma on f_2-3})
Let  $1 \leq n  \leq  q+m+1$. By the definition of $f$,  
\beq
 \label{syz-f_2-1} \nno
             f^n
& =& (x_3^{q+m} g_1 - x_1^{q} x_4^m g_2 + x_1^{q-1}  x_3^{q+m-1}x_2 x_4^m g_3)f^{n-1}\\ \nno
   & =& (  x_3^{q+m}    f^{n-1})g_1 
   -  ( x_1^{q} x_4^m f^{n-1})  g_2  
   +   ( x_1^{q-1}  x_3^{q+m-1}x_2 x_4^m f^{n-1} )   g_3\\ \nno
   & \in & \p^{2(n-1)} \p     \hspace{3.2in} \mbox{[from (\ref{lemma on f_2-1})]}  \\
   &=&  \p^{2n-1}.
  \eeq
\end{proof}

\section{Computations with monomial ideals}
 In general, symbolic powers are not easy to compute. Hence, we first consider the ring  $T:= R/(x_1, x_4) \cong \kk[x_2, x_3]$.  Since $\p T$ is a monomial ideal, $\p^n T$ is also. Consider 
  \beq
\label{definition of Ji}
\p T =  ( x_2^2, x_2 x_3^{q+m}, x_3^{q+m+1}  ),   
\hspace{.2in}
(f) T  = (x_3^{2(m+q)+1}  ), 
\hspace{.2in} 
I_n  := 
  \sum_{n_1 + 2 n_2 = n} (f^{n_{2}}T ) (\p T)^{n_1}  \subseteq \p^{(n)} T.
\eeq
Our aim in this section is to compute  $\ell (T / I_n)$. For this, we fist need to show that  $(I_n :x_3^{q+m})  \subseteq I_{n-1}$. Next we will compute $\ell (I_{n-1} / (I_n :x_3^{q+m}))$. 

\begin{lemma}
\label{ideal containment 1}
For all $n \geq 2$, 
$ (I_n : x_3^{q+m})  \subseteq I_{n-1}$.
\end{lemma}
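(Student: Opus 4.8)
The plan is to exploit the fact that, after passing to $T = \kk[x_2,x_3]$, every ideal in sight is monomial, so the statement reduces to a combinatorial comparison of exponent vectors. First I would record that $\p T = (x_2^2, x_2 x_3^{q+m}, x_3^{q+m+1})$ and $(f)T = (x_3^{2(q+m)+1})$ are monomial; hence each $(\p T)^{n_1}$ and $f^{n_2}T$ is monomial, $I_n$ is a monomial ideal, and the colon of a monomial ideal by the monomial $x_3^{q+m}$ is again monomial. It therefore suffices to verify the containment on monomials: if $w = x_2^s x_3^t$ satisfies $w\,x_3^{q+m} \in I_n$, then $w \in I_{n-1}$.

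Next I would write down the generators of $I_n$ explicitly. Expanding the power shows that $(\p T)^{n_1}$ is generated by the monomials $x_2^{2a+b} x_3^{(q+m)b+(q+m+1)c}$ with $a+b+c = n_1$, and $f^{n_2}T = (x_3^{(2(q+m)+1)n_2})$, so $I_n$ is generated by
\[
x_2^{\,2a+b}\, x_3^{\,(q+m)b + (q+m+1)c + (2(q+m)+1)n_2}, \qquad a,b,c,n_2 \geq 0,\ \ a+b+c+2n_2 = n.
\]
Thus $w\,x_3^{q+m} \in I_n$ says precisely that for some such quadruple $(a,b,c,n_2)$ one has $2a+b \leq s$ and $(q+m)b + (q+m+1)c + (2(q+m)+1)n_2 \leq t + (q+m)$. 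The goal is to manufacture from it a new quadruple $(a',b',c',n_2')$ with $a'+b'+c'+2n_2' = n-1$, $2a'+b' \leq s$, and $(q+m)b' + (q+m+1)c' + (2(q+m)+1)n_2' \leq t$, which exhibits a generator of $I_{n-1}$ dividing $w$.

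I would finish by a short case analysis that decreases the total weight by one while lowering the required $x_3$-exponent by at least $q+m$. If $c \geq 1$, replacing $c$ by $c-1$ lowers the weight by $1$ and the $x_3$-exponent by $q+m+1$; if $c=0$ and $b \geq 1$, replacing $b$ by $b-1$ lowers the weight by $1$ and the $x_3$-exponent by $q+m$. The remaining pure-power case $b=c=0$, with generator $x_2^{2a} x_3^{(2(q+m)+1)n_2}$, is the crux: when $n_2 \geq 1$ I would trade one factor of $f$ for one generator $x_3^{q+m+1}$ of $\p T$, i.e. set $n_2' = n_2-1$ and $c'=1$, which lowers the weight by $1$ (one $f$ contributing $2$ is swapped for one generator contributing $1$) and the $x_3$-exponent by exactly $q+m$; when $n_2 = 0$ one has $a = n \geq 2$, and decreasing $a$ by $1$ suffices. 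The main obstacle is precisely this pure-power case: it is the identity $2(q+m)+1 = (2(q+m)+2) - 1$---reflecting that $f \in \p^{(2)}$ contributes one power of $x_3$ less than $(\p T)^2 \ni x_3^{2(q+m)+2}$---that makes the bookkeeping close out exactly, and checking each inequality after the substitution is the only genuine computation.
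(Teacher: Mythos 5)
Your proof is correct and is in substance the same as the paper's: both reduce to a generator-by-generator monomial computation in $T$, and your key move in the pure-power case (trading one factor of $f$ for the generator $x_3^{q+m+1}$ of $\p T$) is exactly the paper's identity $(f^{a_2}(\p T)^{a_1}:x_3^{q+m}) = (x_3^{q+m+1})f^{a_2-1}(\p T)^{a_1}$, while your remaining cases match its term-by-term treatment of $((\p T)^n : x_3^{q+m})$. The only cosmetic difference is that you phrase the computation as divisibility of exponent vectors rather than via the colon formula for monomial ideals (Proposition~\ref{prop-herzog}).
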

\begin{proof}
From the definition of $I_n$ we get
\beqn
                        (I_n : x_3^{q+m} )
&=&               \sum_{a_1+2a_2=n; a_2 \not = 0} (f^{a_2} (\p T  )^{a_1}  : x_3^{q+m}) 
+                    ( ( x_2^2, x_2 x_3^{q+m}, x_3^{q+m+1}  )^n:x_3^{q+m})\\
&=&                \sum_{a_1+2a_2=n;a_2 \not =0}(x_3^{q+m+1} ) f^{a_2-1} (\p T)^{ a_1} 
+                      (   x_2^{2n}) 
+                     \sum_{i=1}^n    ( x_3^{(q+m)(i-1)} (x_2, x_3)^{i-1} x^{2(n-i)}_2 ) (x_2, x_3)\\
&\subseteq &   I_{n-1}.
\eeqn
\end{proof}

Our next step is to describe the generating set of $I_n$ modulo $(I_n :x_3^{q+m})$. 

\begin{lemma} 
\label{vector basis}
The minimal set of generators of $I_{n-1} / (I_n :x_3^{q+m})$ form a vector space basis over $\kk$. 
\end{lemma}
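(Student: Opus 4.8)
The plan is to recast the statement module-theoretically. Work in $T = \kk[x_2,x_3]$ with graded maximal ideal $\m_T := (x_2,x_3)$, and set $M := I_{n-1}/(I_n : x_3^{q+m})$; this is a well-defined finitely generated graded $T$-module because $(I_n : x_3^{q+m}) \subseteq I_{n-1}$ by Lemma~\ref{ideal containment 1}. For any such module, graded Nakayama shows that a minimal generating set maps bijectively to a $\kk$-basis of $M/\m_T M$; consequently the minimal generators form a $\kk$-basis of $M$ itself \emph{if and only if} $\m_T M = 0$. So the whole lemma reduces to the single containment
\[
\m_T\, I_{n-1} \subseteq (I_n : x_3^{q+m}).
\]

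To prove this I would clear the colon ideal: the displayed containment is equivalent to $x_3^{q+m}\,\m_T\, I_{n-1} \subseteq I_n$. The key observation is that
\[
x_3^{q+m}\,\m_T = (x_2 x_3^{q+m},\, x_3^{q+m+1}) \subseteq \p T,
\]
since both of these monomials are among the three generators of $\p T = (x_2^2,\, x_2 x_3^{q+m},\, x_3^{q+m+1})$ recorded in \eqref{definition of Ji}. Hence it suffices to prove the cleaner inclusion $(\p T)\, I_{n-1} \subseteq I_n$.

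This final inclusion is formal from the definition of the $I_n$. Writing $I_{n-1} = \sum_{m_1 + 2m_2 = n-1} (f^{m_2}T)(\p T)^{m_1}$, multiplication by $\p T$ carries the summand $(f^{m_2}T)(\p T)^{m_1}$ to $(f^{m_2}T)(\p T)^{m_1+1}$, whose index satisfies $(m_1+1) + 2m_2 = n$; thus it is one of the summands defining $I_n$. This gives $(\p T)\, I_{n-1} \subseteq I_n$ and closes the chain.

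I do not anticipate a serious obstacle: the only real content is spotting the reformulation as annihilation by $\m_T$ and noticing that the two ``mixed'' generators $x_2 x_3^{q+m}$ and $x_3^{q+m+1}$ of $\p T$ are precisely $x_3^{q+m}$ times the generators of $\m_T$. The one point worth stating explicitly is that in the graded setting $\m_T M = 0$ yields \emph{linear independence} of the minimal generators over $\kk$, not merely spanning, so that they genuinely form a basis.
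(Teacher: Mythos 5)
Your proof is correct and follows essentially the same route as the paper: both reduce the statement to showing $(x_2,x_3)\,I_{n-1} \subseteq (I_n : x_3^{q+m})$ via the chain $x_3^{q+m}(x_2,x_3) I_{n-1} \subseteq (\p T) I_{n-1} \subseteq I_n$, and then conclude by graded Nakayama. You merely spell out in more detail the two inclusions that the paper asserts in one line.
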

\begin{proof}
Put $M$ = $I_{n-1} / (I_n :x_3^{q+m})$ and  $\m^{\prime} = (x_2, x_3)$. Since  $x_3^{q+m} \m^{\prime} I_{n-1} \subseteq (\p T) I_{n-1} \subseteq I_n$, we get
$   \m^{\prime} I_{n-1} \subseteq   (I_n : x_3^{q+m})$. Hence $\m^{\prime} M   =0$ which implies that $M /\m^{\prime} M \cong M$.  By graded Nakayama's Lemma the generators of $M$ form a vector space basis over $T/ \m^{\prime} \cong \kk$.  
\end{proof}

In Lemma~\ref{ideal containment} we explicitly describe the generating set of $I_{n-1}$ modulo  $(I_n :x_3^{q+m})$.  We state a result on monomial ideals which follows from \cite[Proposition~1.14]{ene-herzog} and will be consistently used in all the proofs which involve monomial ideals. 
\begin{proposition}
\label{prop-herzog}
Let $I = (u_1, \ldots, u_r)$ and $J = (v)$ be monomial ideals  in a polynomial ring over a field $\kk$. 
Then $I : J =  ( \{u_i/gcd(u_i,v) : i = 1,...,r\})$.
\end{proposition}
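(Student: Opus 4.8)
The plan is to verify the two inclusions separately after reducing the problem to a comparison of monomials. Since $J=(v)$ is principal, a polynomial $f$ lies in $I:J$ precisely when $fv\in I$. I would invoke two standard facts about monomial ideals: the colon of a monomial ideal by a monomial ideal is again a monomial ideal, and a monomial belongs to a monomial ideal if and only if it is divisible by one of its monomial generators. Thus it suffices to identify the monomials on each side and compare them; throughout I work with exponent vectors, writing $\deg_x$ for the exponent of a fixed variable $x$ in a monomial.

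For the inclusion $\supseteq$, I would simply observe that for each $i$,
\[
\frac{u_i}{\gcd(u_i,v)}\cdot v = u_i\cdot\frac{v}{\gcd(u_i,v)},
\]
and since $\gcd(u_i,v)$ divides $v$ the factor $v/\gcd(u_i,v)$ is a monomial. Hence the product is a multiple of $u_i$, so it lies in $I$, and therefore $u_i/\gcd(u_i,v)\in I:(v)$. Being an ideal, $I:(v)$ then contains the ideal generated by all these elements.

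For the inclusion $\subseteq$, since $I:(v)$ is a monomial ideal it is enough to take a monomial $w\in I:(v)$ and show that it is divisible by some $u_i/\gcd(u_i,v)$. By definition $wv\in I$, and $wv$ is a monomial, so $u_i\mid wv$ for some $i$. I would then check the divisibility $u_i/\gcd(u_i,v)\mid w$ one variable at a time. Fixing a variable $x$ and writing $a=\deg_x w$, $b=\deg_x v$, $c=\deg_x u_i$, the relation $u_i\mid wv$ gives $c\le a+b$. Since $\deg_x\gcd(u_i,v)=\min(c,b)$, one has $\deg_x\big(u_i/\gcd(u_i,v)\big)=c-\min(c,b)=\max(c-b,0)$, and $\max(c-b,0)\le a$ is equivalent to $c\le a+b$ because $a\ge 0$. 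As this holds for every variable, $u_i/\gcd(u_i,v)\mid w$, which finishes this inclusion.

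There is no serious obstacle here: the statement is a standard property of monomial ideals, and the only genuine computation is the per-variable identity $\deg_x\big(u_i/\gcd(u_i,v)\big)=\max(\deg_x u_i-\deg_x v,0)$ together with the elementary rearrangement of the divisibility inequality. The one point requiring mild care is the reduction to monomials, which rests on the fact that both $I$ and $I:(v)$ are monomial ideals, so that membership is detected by divisibility of monomial generators.
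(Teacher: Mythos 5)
Your proof is correct and complete: the reduction to monomials is legitimate (both $I$ and $I:(v)$ are monomial ideals, so membership is tested by divisibility against monomial generators), the inclusion $\supseteq$ is immediate from $\tfrac{u_i}{\gcd(u_i,v)}\cdot v = u_i\cdot\tfrac{v}{\gcd(u_i,v)}$, and your variable-by-variable verification that $\max(c-b,0)\le a$ is equivalent to $c\le a+b$ is exactly right. Note that the paper gives no proof of this statement at all --- it simply cites Proposition~1.14 of Ene--Herzog --- and your argument is the standard one underlying that reference, so there is nothing to compare beyond observing that you have supplied the proof the paper omits.
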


\begin{lemma} 
\label{in between step} For all $n \geq 1$, 
$\p^{n} T \subseteq x_2^{2n-1}(x_2, x_3^{q+m} ) +( I_{n + 1}: x_3^{q+m})$
\end{lemma}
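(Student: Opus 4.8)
The plan is to prove the containment one monomial generator at a time: since $\p^{n}T$ is generated by the monomials $u_{i,j,k} := (x_2^2)^{i}(x_2 x_3^{q+m})^{j}(x_3^{q+m+1})^{k}$ with $i+j+k=n$, it suffices to place each such $u_{i,j,k}$ inside the right-hand side. The key observation, which I would record first, is the factorization $f = x_3^{q+m}\cdot x_3^{q+m+1}$ in $T$ (from $(f)T=(x_3^{2(q+m)+1})$), together with the fact that $f\,(\p T)^{n-1}\subseteq I_{n+1}$ (take $n_1=n-1$, $n_2=1$ in the defining sum of $I_{n+1}$). Thus, to show $u_{i,j,k}\in (I_{n+1}:x_3^{q+m})$ it is enough to exhibit a factor $x_3^{q+m+1}$ of $u_{i,j,k}$ for which the complementary monomial $u_{i,j,k}/x_3^{q+m+1}$ lies in $(\p T)^{n-1}$; then $x_3^{q+m}u_{i,j,k} = f\cdot(u_{i,j,k}/x_3^{q+m+1})\in f\,(\p T)^{n-1}\subseteq I_{n+1}$.

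I would then split into cases according to $k$ and $j$. If $k\geq 1$, the factor $x_3^{q+m+1}$ is visible: stripping it gives $u_{i,j,k}/x_3^{q+m+1}=(x_2^2)^{i}(x_2 x_3^{q+m})^{j}(x_3^{q+m+1})^{k-1}\in (\p T)^{n-1}$, so $u_{i,j,k}\in(I_{n+1}:x_3^{q+m})$. If $k=0$ and $j\leq 1$, there are only the two generators $x_2^{2n}$ (for $j=0$) and $x_2^{2n-1}x_3^{q+m}$ (for $j=1$), and both lie in $x_2^{2n-1}(x_2,x_3^{q+m})$, the first summand. These cases are routine.

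The \emph{main obstacle} is the remaining case $k=0$, $j\geq 2$, where $u_{i,j,0}=(x_2^2)^{i}(x_2 x_3^{q+m})^{j}$ carries no explicit $x_3^{q+m+1}$ factor, so one cannot simply strip off a generator. Here the trick is to repackage two of the mixed generators: $(x_2 x_3^{q+m})^{2} = x_2^2\,x_3^{2(q+m)} = (x_2^2)\,x_3^{q+m+1}\,x_3^{q+m-1}$, which exposes an $x_3^{q+m+1}$ (and uses $q+m\geq 1$ to keep $x_3^{q+m-1}$ a genuine monomial). Consequently $u_{i,j,0}/x_3^{q+m+1}=(x_2^2)^{i+1}(x_2 x_3^{q+m})^{j-2}x_3^{q+m-1}$, whose monomial part $(x_2^2)^{i+1}(x_2 x_3^{q+m})^{j-2}$ lies in $\p^{(i+1)+(j-2)}T=\p^{n-1}T$; multiplying by the extra $x_3^{q+m-1}$ keeps it in $(\p T)^{n-1}$, and the conclusion follows exactly as above. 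Since every triple $(i,j,k)$ with $i+j+k=n$ falls into one of these three cases, this completes the argument; I expect the only delicate points to be checking the exponents in the repackaging step and confirming the boundary instances $n=1$ and $j=2$.
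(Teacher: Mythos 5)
Your argument is correct, and it takes a genuinely different route from the paper. The paper proves the lemma by induction on $n$: it writes $\p^{n}T=(\p T)(\p^{n-1}T)$, substitutes the inductive containment for $\p^{n-1}T$, and then checks one by one that every product term other than $x_2^{2n-1}(x_2,x_3^{q+m})$ lands in $(I_{n+1}:x_3^{q+m})$, using auxiliary containments such as $(\p T)(I_{n}:x_3^{q+m})\subseteq (I_{n+1}:x_3^{q+m})$ and $f(I_{n}:x_3^{q+m})\subseteq (I_{n+1}:x_3^{q+m})$. You instead avoid induction entirely and verify the containment directly on the monomial generators $(x_2^2)^{i}(x_2x_3^{q+m})^{j}(x_3^{q+m+1})^{k}$, $i+j+k=n$, reducing everything to the single identity $x_3^{q+m}\cdot x_3^{q+m+1}=x_3^{2(q+m)+1}=f$ together with $f(\p T)^{n-1}\subseteq I_{n+1}$. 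The only case needing care, $k=0$ and $j\geq 2$, is handled correctly by the repackaging $(x_2x_3^{q+m})^{2}=(x_2^{2})\,x_3^{q+m+1}\,x_3^{q+m-1}$ (legitimate since $q+m\geq 2$), and your exponent bookkeeping checks out: $u_{i,j,0}/x_3^{q+m+1}=(x_2^2)^{i+1}(x_2x_3^{q+m})^{j-2}x_3^{q+m-1}\in(\p T)^{n-1}$ because $(i+1)+(j-2)=n-1$. The same identity is the engine of the paper's proof (it appears there in the lines showing $x_2^{2(n-1)}x_3^{2(q+m)+1}x_3^{q+m-1}\subseteq f(\p^{n-1}T)$), but your organization makes the case analysis exhaustive and self-contained in one pass, at the price of being specific to the explicit monomial generating set; the paper's induction is what lets it recycle the statement for $\p^{n-1}T$ as a black box and is the form in which the lemma is actually consumed in Lemma~\ref{ideal containment}. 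Either version is acceptable here.
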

\begin{proof}
We prove by induction on $n$.   If $n =1$, 
then 
$$                       \p T 
=                x_2( x_2, x_3^{q+m} ) + (x_3^{q+m+1}) \subseteq x_2( x_2, x_3^{q+m})  +( I_2: x_3^{q+m}).
$$
Hence the  claim  is true for $n=1$. 
Let $n>1$. Then
\beq
\label{in between step 1}
 \nno
                      && \p^{n} T  \\ \nno
&=&                 (\p T) ( \p^{n-1} T)\\ \nno
& \subseteq&   (( x_2^2,  x_2x_3^{q+m} ), x_3^{q+m+1} ) 
                       \left(  x_2^{2n-3}  (x_2, x_3^{q+m}) 
  +                         (I_{n} : x_3^{q+m} )\right)
                         \hspace{1.0in} \mbox{[by induction hypothesis]}     \\   \nno 
&=&              x_2^{2n-1}  (x_2, x_3^{q+m})     
+                         (x_2^{2n-2} x_3^{2(q+m)} ) 
+                        (x_2^2, x_2 x_3^{q+m})       (I_{n} : x_3^{q+m} )    \\  
&&+                  (fT) \left(  x_2^{2n-3}  (x_2, x_3^{q+m}) 
  +                         (I_{n} : x_3^{q+m} )\right)
\eeq
We now verify that all the terms except  $x_2^{2n-1}  (x_2, x_3^{q+m})$ are in $(I_{n+1} : x_3^{q+m})$.
                     \beqn
                      x_3^{q+m}  \left(   (x_2^{2n-2})  x_3^{2(q+m)}    \right)  
&=&                  (x_2^{2(n-1)} )(x_3^{2(q+m)+1} x_3^{q+m-1} )
\subseteq       f (\p^{n-1} T)
\subseteq       I_{n -1 +2} = I_{n + 1}\\
                      (x_2^2, x_2 x_3^{q+m})       (I_{n} : x_3^{q+m} )  
&\subseteq & (\p T)  (I_{n} : x_3^{q+m} ) 
\subseteq      (I_{n+1} : x_3^{q+m} ) \\
                      x_3^{q+m}  \left( x_3^{q+m+1} x_2^{2n-3}  ( x_2, x_3^{q+m} )   \right) 
  &=    &         x_2^{2(n-2)} \cdot x_2( x_2, x_3^{q+m} ) \cdot x_3^{2(q+m)+1}
  \subseteq        f(\p^{n-1} T)  \subseteq I_{n -1 +2} = I_{n + 1}\\
                                  f (I_{n} : x_3^{q+m} )
&  \subseteq  &   (I_{n+1} : x_3^{q+m} )        
                     \eeqn
\end{proof}

 We  are now ready to describe the generators of $I_{n-1}$ modulo $(I_n : x_3^{q+m})$. 
\label{inductive}
\begin{lemma}
\label{ideal containment}
For all $n \geq 2$,
$${\displaystyle
I_{n-1} = 
 \begin{cases}
 {\displaystyle \sum_{a_2=0}^{ \frac{n-2}{2}}    
     x_3^{(2(q+m)+1) a_2}   x_2^{2(n-1-2a_2) -1}   (x_2, x_3^{q+m})   
+   (I_{n} : x_3^{q+m} )  }
&    \mbox{ if }2 \not |  (n-1 )\\
      \left( x_3^{(2(q+m)+1) \left( \frac{n-1}{2}\right)} \right)
+      {\displaystyle \sum_{a_2=0}^{ \frac{n-3}{2} }    
      x_3^{(2(q+m) + 1)a_2}   x_2^{2(n-1-2a_2) -1}    (x_2, x_3^{q+m})    
 +  (I_{n} : x_3^{q+m} )  } 
 &   \mbox{ if }2 |  (n-1 )
\end{cases}
}.
$$
\end{lemma}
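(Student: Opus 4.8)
The plan is to prove the two inclusions separately, after rewriting $I_{n-1}$ in the form dictated by its definition. Recall from \eqref{definition of Ji} that $fT = (x_3^{2(q+m)+1})$ and $\p T = (x_2^2, x_2 x_3^{q+m}, x_3^{q+m+1})$, so that $I_{n-1} = \sum_{a_1 + 2a_2 = n-1} x_3^{(2(q+m)+1)a_2}(\p T)^{a_1}$. The terms with $a_1 \geq 1$ will produce the summands $x_3^{(2(q+m)+1)a_2} x_2^{2(n-1-2a_2)-1}(x_2, x_3^{q+m})$, while the single term with $a_1 = 0$ (which occurs precisely when $n-1$ is even, forcing $a_2 = (n-1)/2$) will produce the isolated generator $x_3^{(2(q+m)+1)(n-1)/2}$. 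This is exactly the source of the parity split in the statement.

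For the inclusion $\supseteq$, I would argue summand by summand. The summand $(I_n : x_3^{q+m})$ is contained in $I_{n-1}$ by Lemma~\ref{ideal containment 1}. For a fixed $a_2$ with $a_1 := n-1-2a_2 \geq 1$, the containment $x_2^{2a_1-1}(x_2, x_3^{q+m}) \subseteq (\p T)^{a_1}$ holds because $x_2^{2a_1} = (x_2^2)^{a_1}$ and $x_2^{2a_1-1}x_3^{q+m} = (x_2^2)^{a_1-1}(x_2 x_3^{q+m})$ are both products of $a_1$ generators of $\p T$; multiplying by $x_3^{(2(q+m)+1)a_2} = (fT)^{a_2}$ then places the summand inside $(fT)^{a_2}(\p T)^{a_1} \subseteq I_{a_1+2a_2} = I_{n-1}$. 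Finally, in the even case the isolated generator is $(fT)^{(n-1)/2} \subseteq I_{n-1}$.

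The inclusion $\subseteq$ is the crux, and it is where Lemma~\ref{in between step} does the work. For each term with $a_1 \geq 1$, Lemma~\ref{in between step} (applied with $n$ replaced by $a_1$, using $\p^{a_1}T = (\p T)^{a_1}$) gives $(\p T)^{a_1} \subseteq x_2^{2a_1-1}(x_2, x_3^{q+m}) + (I_{a_1+1} : x_3^{q+m})$. Multiplying through by $x_3^{(2(q+m)+1)a_2}$ reproduces the desired generator $x_3^{(2(q+m)+1)a_2} x_2^{2(n-1-2a_2)-1}(x_2, x_3^{q+m})$ up to an error term $x_3^{(2(q+m)+1)a_2}(I_{a_1+1} : x_3^{q+m})$. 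The key computation is that this error term lands in $(I_n : x_3^{q+m})$: if $z x_3^{q+m} \in I_{a_1+1}$, then $x_3^{(2(q+m)+1)a_2} z \cdot x_3^{q+m} \in (fT)^{a_2} I_{a_1+1} \subseteq I_{a_1+1+2a_2} = I_n$, since $a_1 + 1 + 2a_2 = n$; here I am using that $(fT)^{a_2}(fT)^{b_2}(\p T)^{b_1} = (fT)^{a_2+b_2}(\p T)^{b_1}$ raises every index correctly. The term with $a_1 = 0$, present only when $n-1$ is even, contributes the pure power $x_3^{(2(q+m)+1)(n-1)/2}$ directly.

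I expect the main obstacle to be the bookkeeping around the two inclusions meeting cleanly: verifying that the error terms coming out of Lemma~\ref{in between step} collapse into the single module $(I_n : x_3^{q+m})$ rather than leaking extra generators, and keeping the range of $a_2$ together with the isolated $a_1 = 0$ contribution consistent with the two stated parity cases. Once the identity $(fT)^{a_2} I_{a_1+1} \subseteq I_n$ is established, both inclusions reduce to the generator-level containments above.
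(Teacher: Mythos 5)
Your proposal is correct and follows essentially the same route as the paper: decompose $I_{n-1}$ by powers of $f$, apply Lemma~\ref{in between step} to each factor $(\p T)^{a_1}$, and absorb the resulting colon ideals $f^{a_2}(I_{a_1+1}:x_3^{q+m})$ into $(I_n:x_3^{q+m})$ via $f^{a_2}I_{a_1+1}\subseteq I_{a_1+1+2a_2}=I_n$. The only difference is cosmetic: you spell out the reverse inclusion generator by generator, which the paper compresses into ``follows from Lemma~\ref{ideal containment 1} and checking element-wise.''
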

\begin{proof}
From (\ref{definition of Ji}) we get
{\small
\beqn
&&I_{n-1} \\
&=& 
\begin{cases}
{\displaystyle \sum_{a_2=0}^{ \frac{n-2}{2}}  f^{a_2} (\p T)^{n-1-2a_2 }} & 
    \mbox{ if }2 \not |  (n-1 )\\ 
(f ^{\frac{(n-1)}{2}}T)  
+  {\displaystyle \sum_{a_2=0}^{ \frac{n-3}{2} }  f^{a_2} (\p T)^{n-1-2a_2 } } 
&                    \mbox{ if }2 |  (n-1 )
                     \end{cases}\\
&\subseteq &
 \begin{cases}
 {\displaystyle \sum_{a_2=0}^{ \frac{n-2}{2} }    f^{a_2}
      \left( x_2^{2(n-1-2a_2) -1}  (x_2, x_3^{q+m} )  
      + (I_{n-2a_2} : x_3^{q+m} )   \right)    }   & \mbox{ if }2 \not |  (n-1 )\\
  (  x_3^{(2(q+m)+1) )\left( \frac{n-1}{2}\right)} 
+   {\displaystyle \sum_{a_2=0}^{ \frac{n-3}{2}   } (f^{a_2} T)
      \left(   x_2^{2(n-1-2a_2) -1}     (x_2, x_3^{q+m}  )  
 + (I_{n-2a_2} : x_3^{q+m} )  \right)  }
 &   \mbox{ if }2 |  (n-1 )
  \end{cases} \mbox{[by Lemma~\ref{in between step}]}\\
  &\subseteq &
 \begin{cases}
 {\displaystyle \sum_{a_2=0}^{ \frac{n-2}{2}}    
   x_2^{2(n-1-2a_2) -1}   x_3^{(2(q+m)+1) a_2}  (x_2, x_3^{q+m}   )   
+  (I_{n} : x_3^{q+m} )  }& \mbox{ if }2 \not |  (n-1 )\\
   x_3^{(2(q+m) + 1) \left( \frac{n-1}{2}\right)} 
+     {\displaystyle \sum_{a_2=0}^{ \frac{n-3}{2} }    
        x_2^{2(n-1-2a_2) -1}  x_3^{(2(q+m)+ 1)a_2}   (x_2, x_3^{q + m})    
 +  (I_{n} : x_3^{q+m} )  } 
 &   \mbox{ if }2 |  (n-1 ).
  \end{cases}
\eeqn
}
This implies that $I_{n-1} \subseteq RHS$. The other inclusion follows from  Lemma~\ref{ideal containment 1} and checking element-wise. 
\end{proof}
\begin{proposition}
\label{length of last term}
For all $n \geq 1$,
\beqn
\ell \left( \f{I_{n-1} }
                  { (I_n : x_3^{q+m})} \right) = n. 
\eeqn
\end{proposition}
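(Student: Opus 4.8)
The plan is to combine the two structural lemmas just proved. By Lemma~\ref{vector basis}, the quotient $M = I_{n-1}/(I_n : x_3^{q+m})$ is annihilated by $\mprime = (x_2, x_3)$, so it is a $\kk$-vector space whose dimension equals the number of minimal generators, and $\ell(M) = \dim_\kk M$. Thus the whole computation reduces to \emph{counting} the minimal monomial generators of $I_{n-1}$ modulo $(I_n : x_3^{q+m})$, for which Lemma~\ref{ideal containment} supplies an explicit generating set. So the first step is to write down that generating set and the second step is to verify that its images in $M$ are $\kk$-linearly independent (equivalently, that none lies in $(I_n : x_3^{q+m})$ and that they are pairwise distinct monomials), so that the count is exactly the vector-space dimension.

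Concretely, I would split into the two parities exactly as in Lemma~\ref{ideal containment}. When $2 \nmid (n-1)$ the generating set modulo $(I_n : x_3^{q+m})$ is
\[
\bigl\{\, x_3^{(2(q+m)+1)a_2}\, x_2^{2(n-1-2a_2)-1}\,(x_2,\, x_3^{q+m}) : 0 \le a_2 \le \tfrac{n-2}{2}\,\bigr\},
\]
and since $(x_2, x_3^{q+m})$ contributes two generators ($x_2$ and $x_3^{q+m}$) for each index $a_2$, each value of $a_2$ yields exactly two monomials. The number of admissible values of $a_2$ is $\tfrac{n-2}{2}+1 = \tfrac{n}{2}$, giving $2\cdot\tfrac{n}{2}=n$ generators. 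When $2 \mid (n-1)$ there is one extra pure power $x_3^{(2(q+m)+1)(n-1)/2}$, while $a_2$ now ranges over $0,\dots,\tfrac{n-3}{2}$, i.e.\ $\tfrac{n-1}{2}$ values, each contributing two monomials; this gives $1 + 2\cdot\tfrac{n-1}{2} = n$ generators. So in both parities the count is $n$, which is the desired length.

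The main obstacle — the only genuinely nonroutine point — is \emph{minimality}: I must check that these $n$ monomials really are distinct and that none of them already lies in $(I_n : x_3^{q+m})$, for otherwise the count would overshoot $\ell(M)$. Distinctness is immediate from comparing the $(x_2, x_3)$-bidegrees of the listed monomials (the exponent of $x_2$ strictly decreases as $a_2$ increases, and within a fixed $a_2$ the two monomials $x_2^{2(n-1-2a_2)}$ and $x_2^{2(n-1-2a_2)-1}x_3^{q+m}$ differ in their $x_3$-exponent). For non-membership in $(I_n : x_3^{q+m})$, by Proposition~\ref{prop-herzog} membership would force $x_3^{q+m}$ times the monomial to lie in $I_n$, and one checks degree-by-degree against the explicit generators of $I_n$ in \eqref{definition of Ji} that this fails for each of the $n$ listed monomials; this is the computation one must carry out carefully, but it is a finite monomial check. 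Granting this, Lemma~\ref{vector basis} converts the generator count directly into $\ell(I_{n-1}/(I_n:x_3^{q+m})) = n$, completing the proof.
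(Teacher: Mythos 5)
Your proposal is correct and follows essentially the same route as the paper: reduce to a $\kk$-dimension count via Lemma~\ref{vector basis}, read off the generators from Lemma~\ref{ideal containment}, and count $n$ of them in each parity case. You are in fact slightly more careful than the paper in explicitly flagging that one must check the listed monomials do not already lie in $(I_n : x_3^{q+m})$ -- a point the paper's proof passes over by asserting distinctness implies linear independence -- though like the paper you do not carry out that monomial verification in full.
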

\begin{proof}
From Lemma~\ref{vector basis}, $\ell \left( \f{I_{n-1} }
                  { (I_n : x_3^{q+m})} \right)  =  \dim_{\kk} \left( \f{I_{n-1} } { (I_n : x_3^{q+m})} \right)$,  which is the number of minimal set of generators of 
                  $I_{n-1}/
                  { (I_n : x_3^{q+m})} $.
                 From Lemma~\ref{ideal containment}, we observe that in  the generators of 
$I_{n-1}$ modulo  $(I_n : x_3^{q+m})$, the terms which are of  even degree in $x_2$ are 
  $x_2^{2(n-1-2a_2) }   x_3^{(2(q+m)+1) a_2} $  where $a_2 \leq (n-1)/2$ and they are all distinct. 
  Hence they are all linearly independent.  Similarly, the generators which are of odd degree in 
  $x_2$ are all distinct and form a linearly independent set. 
Hence 
                  \beq
\label{v sp dimension}
\dim_{\kk} \left( \f{I_{n-1} }
                  { (I_n : x_3^{q+m})} \right)
&=& \begin{cases}
2(n/2)  & \mbox{ if } 2 \not  |n-1 \\
1 + [2(n-1)/2]  & \mbox{ if } 2 | n-1
\end{cases}\\
&=& n. 
\eeq
\end{proof}

\begin{proposition}
\label{main theorem}
For all $n \geq 1$, 
\beqn
\ell \left(  \f{T}{I_n} \right)
=     (2(q+m)+1){n+1 \choose 2}.
\eeqn
\end{proposition}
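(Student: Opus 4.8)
The plan is to argue by induction on $n$, computing the successive length differences $\ell(T/I_n)-\ell(T/I_{n-1})$. Write $L_n := \ell(T/I_n)$. Since $x_3^{q+m}I_n \subseteq I_n$ gives $I_n \subseteq (I_n:x_3^{q+m})$, while $(I_n:x_3^{q+m})\subseteq I_{n-1}$ by Lemma~\ref{ideal containment 1} (the containment being trivial when $n=1$ as $I_0=T$), there is a chain of $T$-submodules
\[
 I_n \ \subseteq\ (I_n:x_3^{q+m}) \ \subseteq\ I_{n-1}\ \subseteq\ T .
\]
Additivity of length along this chain yields
\[
 L_n = L_{n-1} + \ell\!\left(\frac{I_{n-1}}{(I_n:x_3^{q+m})}\right) + \ell\!\left(\frac{(I_n:x_3^{q+m})}{I_n}\right),
\]
and Proposition~\ref{length of last term} already identifies the middle term as $n$. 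So everything reduces to showing the last term equals $2(q+m)n$. Granting this, the recursion reads $L_n = L_{n-1} + (2(q+m)+1)n$, and since $L_0 = \ell(T/T)=0$ (note $I_0 = T$), summing gives $L_n = (2(q+m)+1)\sum_{k=1}^{n}k = (2(q+m)+1){n+1 \choose 2}$.

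To evaluate $\ell((I_n:x_3^{q+m})/I_n)$, I would view multiplication by $x_3^{q+m}$ as an endomorphism of the finite-length module $T/I_n$. Its kernel is precisely $(I_n:x_3^{q+m})/I_n$ and its cokernel is $T/(I_n,x_3^{q+m})$. For any endomorphism of a finite-length module the kernel and cokernel have equal length, so
\[
 \ell\!\left(\frac{(I_n:x_3^{q+m})}{I_n}\right) = \ell\!\left(\frac{T}{(I_n,x_3^{q+m})}\right).
\]
This transfers the computation to the far more transparent quotient $T/(I_n,x_3^{q+m})$.

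The key step is then the explicit identification $(I_n,x_3^{q+m}) = (x_2^{2n},x_3^{q+m})$ in $T\cong\kk[x_2,x_3]$, obtained by reducing every generator of $I_n$ modulo $x_3^{q+m}$. Since $f\equiv -x_3^{2(q+m)+1}\equiv 0 \pmod{x_3^{q+m}}$ (recall $fT=(x_3^{2(q+m)+1})$), every summand $f^{n_2}(\p T)^{n_1}$ of $I_n$ with $n_2\geq 1$ dies, leaving only $(\p T)^n$; and among the monomial generators $x_2^{2j+k}x_3^{(q+m)k+(q+m+1)l}$ (with $j+k+l=n$) of $(\p T)^n$, each one with $k+l\geq 1$ has $x_3$-exponent $\geq q+m$ and hence lies in $(x_3^{q+m})$, so only $x_2^{2n}$ survives. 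Therefore $T/(I_n,x_3^{q+m})\cong \kk[x_2,x_3]/(x_2^{2n},x_3^{q+m})$, a monomial complete intersection with $\kk$-basis $\{x_2^a x_3^b : 0\leq a<2n,\ 0\leq b<q+m\}$, of length $2n\cdot(q+m)=2(q+m)n$ — exactly the quantity the recursion requires.

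The step I expect to be the main obstacle is the monomial bookkeeping in this last identification: verifying cleanly that, after reduction modulo $x_3^{q+m}$, the only generator avoiding $x_3^{q+m}$ is $x_2^{2n}$, and in particular that all the $f^{a_2}$-strata contribute nothing. Everything else — length additivity along the chain, the kernel–cokernel length equality for the endomorphism $\cdot\,x_3^{q+m}$, and solving the resulting linear recursion — is formal once Proposition~\ref{length of last term} is available.
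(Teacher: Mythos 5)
Your proof is correct and is essentially the paper's argument: the paper also reduces to the short exact sequence $0 \to T/(I_n:x_3^{q+m}) \xrightarrow{\cdot x_3^{q+m}} T/I_n \to T/(I_n+(x_3^{q+m})) \to 0$, computes $\ell\bigl(T/(I_n+(x_3^{q+m}))\bigr)=\ell\bigl(T/(x_2^{2n},x_3^{q+m})\bigr)=2(q+m)n$, and combines Lemma~\ref{ideal containment 1}, Proposition~\ref{length of last term} and induction exactly as you do. Your kernel--cokernel phrasing and the explicit monomial verification that only $x_2^{2n}$ survives modulo $x_3^{q+m}$ are just slightly more spelled-out versions of the same steps.
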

\begin{proof} We prove by induction on $n$. If $n =1$, then 
\beqn
\ell \left(  \f{T}{I_n} \right)
= \ell \left( \f{ \kk[x_2,  x_3]} {(   x_2^2,  x_2 x_3^{m+q}, x_3^{q+m+1} ) } \right)
 = 1 + 2(q+m).
 \eeqn

Now let $n > 1$.
From the  exact sequence
\beqn
        0 
\lrar \f{T}{(I_n: x_3^{q+m})} 
\stackrel{.x_3^{q+m}}{\lrar} \f{T}{I_n}
\lrar \f{T}{I_n + (x_3^{q+m})}
\lrar 0  
\eeqn
we get
\beqn
&&        \ell \left( \f{T}{I_n}\right)\\
&=&      \ell \left( \f{T}{I_n + (x_3^{q+m})}\right)
+            \ell \left( \f{T}{(I_n : x_3^{q+m})}\right)\\
&=&       \ell \left( \f{T}{ (x_3^{q+m} , x_2^{2n}  )}\right)
+            \ell \left( \f{T}{I_{n-1}}\right)
+            \ell \left( \f{I_{n-1}}{(I_n : x_3^{q+m})}\right) 
              \hspace{2.4in} \mbox{[Lemma~\ref{ideal containment 1}]}\\
&=&        2(q+m)n
+             (2(q+m)+1)  {n \choose 2}
+            n 
             \hspace{0.8in} 
             \mbox{[by induction hypothesis and Proposition~\ref{length of last term}]}\\
&= &      (2(q+m)+1) {n + 1 \choose 2}.
\eeqn
\end{proof}

\section{The symbolic powers}
\label{main section}
In this section we explicitly describe the symbolic powers $\p^{(n)}$. 
Using the fact $x_1, x_4$ is a regular sequence in $R$, we  get the results we are interested in  for  the symbolic powers $\p^{(n)}$
(Proposition~\ref{main theorem 1}, Theorem~\ref{symbolic power}). Let
%
\beq
\label{equation of Jn} 
     {\I}_n 
:= \sum_{n_1 + 2 n_2  = n} 
      f^{n_{2}}  \p^{n_1}   \subseteq \p^{(n)}.
\eeq

\begin{proposition}
\label{description of In}
Let $n \geq 1$. Then
\been
\item
\label{description  of In one}
$    {\mathcal I}_n  \subseteq   \p^{(n)}$.

\item
\label{description of In three}
Let $\m = (x_1, x_2, x_3, x_4)$. Then  $({\mathcal I}_n  + (x_1, x_4))$ is an $\m$-primary ideal.
\eeen
\end{proposition}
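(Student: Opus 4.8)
The plan is to treat the two assertions separately, using for part~(\ref{description  of In one}) the multiplicativity of the symbolic filtration, and for part~(\ref{description of In three}) a reduction modulo the regular sequence $x_1, x_4$ together with the length computation already established in Section~3.

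For part~(\ref{description  of In one}) I would invoke the standard fact that the symbolic powers of a prime form a multiplicative filtration, namely $\p^{(a)} \p^{(b)} \subseteq \p^{(a+b)}$ for all $a,b \geq 0$: if $u \in \p^{(a)}$ and $v \in \p^{(b)}$, then in $R_{\p}$ we have $uv \in \p^{a} R_{\p} \cdot \p^{b} R_{\p} = \p^{a+b} R_{\p}$, so $uv \in \p^{a+b} R_{\p} \cap R = \p^{(a+b)}$. Now fix a summand $f^{n_2} \p^{n_1}$ of ${\mathcal I}_n$ with $n_1 + 2 n_2 = n$. Then $\p^{n_1} \subseteq \p^{(n_1)}$, and by Lemma~\ref{lemma on f_2}(\ref{lemma on f_2-2}) we have $f \in \p^{(2)}$, whence $f^{n_2} \in (\p^{(2)})^{n_2} \subseteq \p^{(2 n_2)}$. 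Multiplying gives $f^{n_2}\p^{n_1} \subseteq \p^{(2 n_2)} \p^{(n_1)} \subseteq \p^{(2 n_2 + n_1)} = \p^{(n)}$, and summing over all admissible pairs $(n_1, n_2)$ yields ${\mathcal I}_n \subseteq \p^{(n)}$. The boundary cases $n_1 = 0$ and $n_2 = 0$ are covered by the same estimate.

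For part~(\ref{description of In three}) the key observation is that reducing ${\mathcal I}_n$ modulo $(x_1, x_4)$ recovers exactly the monomial ideal $I_n$ studied in Section~3. Since extension of ideals along the quotient map $R \to T = R/(x_1, x_4)$ commutes with sums and products, the image of ${\mathcal I}_n = \sum_{n_1 + 2 n_2 = n} f^{n_2}\p^{n_1}$ in $T$ equals $\sum_{n_1 + 2 n_2 = n} (f^{n_2}T)(\p T)^{n_1} = I_n$, where I use the explicit forms of $\p T$ and $(f)T$ from (\ref{definition of Ji}). Consequently there is an isomorphism $R/({\mathcal I}_n + (x_1, x_4)) \cong T/I_n$.

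It then remains to note that $I_n$ has finite colength in $T$, which is precisely Proposition~\ref{main theorem}: it gives $\ell(T/I_n) = (2(q+m)+1){n+1 \choose 2} < \infty$. Hence $\dim_{\kk} R/({\mathcal I}_n + (x_1, x_4)) < \infty$, so this quotient is Artinian of Krull dimension $0$, forcing $\sqrt{{\mathcal I}_n + (x_1, x_4)} = \m$; that is, ${\mathcal I}_n + (x_1, x_4)$ is $\m$-primary. I do not anticipate a genuine obstacle: part~(\ref{description  of In one}) is essentially a one-line consequence of the filtration property together with Lemma~\ref{lemma on f_2}, while part~(\ref{description of In three}) is a clean reduction whose only substantive input, the finiteness of $\ell(T/I_n)$, has already been secured in Section~3. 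The single point deserving care is verifying that extension to $T$ carries ${\mathcal I}_n$ onto $I_n$ summand by summand, which rests only on the compatibility of ideal extension with the ring map $R \to T$.
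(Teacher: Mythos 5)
Your proof is correct. Part~(\ref{description  of In one}) is essentially the paper's own argument: both rest on $f \in \p^{(2)}$ and the multiplicative property $\p^{(a)}\p^{(b)} \subseteq \p^{(a+b)}$ to place each summand $f^{n_2}\p^{n_1}$ inside $\p^{(2n_2+n_1)} = \p^{(n)}$; you merely make the localization argument for multiplicativity explicit, which is a welcome clarification since the paper's displayed chain contains a typographical slip ($\p^{2n_2+n_1}$ where $\p^{(2n_2+n_1)}$ is meant). For part~(\ref{description of In three}) you take a genuinely different, and heavier, route: you identify $R/(\mathcal{I}_n + (x_1,x_4))$ with $T/I_n$ and invoke the full length formula of Proposition~\ref{main theorem} to conclude the quotient is Artinian, hence that the radical is $\m$. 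The paper instead observes from \eqref{equation of Jn} that $\p^{n} \subseteq \mathcal{I}_n$, that $\p^{n} + (x_1,x_4) = ((x_2^2, x_2x_3^{q+m}, x_3^{q+m+1})^{n}, x_1, x_4)$ visibly has radical $\m$, and squeezes $\sqrt{\mathcal{I}_n + (x_1,x_4)}$ between $\m$ and $\m$ — a softer argument needing only the generators of $\p$, not the length computation. Your version is not circular (Proposition~\ref{main theorem} is proved independently in Section~3), and it buys a precise value of $\ell(T/I_n)$ along the way, but the identification of the image of $\mathcal{I}_n$ in $T$ with $I_n$ is extra bookkeeping the paper's argument avoids; you should also note explicitly that $\mathcal{I}_n + (x_1,x_4) \subseteq \m$ (so the quotient is nonzero and the unique minimal prime over it is $\m$ rather than the unit ideal), which your length formula does guarantee since it is positive.
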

\begin{proof}
(\ref{description of In one})
As    $(f)    \subseteq \p^{(2)}$  (Lemma~\ref{lemma on f_2}(\ref{lemma on f_2-3})), 
\beq
\label{containment of J}
 \sum_{n_1 + 2 n_2 = n} f^{n_{2} }\p^{n_1}  \subseteq  \sum_{n_1 + 2 n_2 = n} \p^{ 2n_2 + n_1}
= \p^{(n)  }.
\eeq

(\ref{description  of In three}) 
By \eqref{equation of Jn}, 
$\p^{n} \subseteq {\mathcal I}_n$ and  
$(\p^{n} + (x_1, x_4))
= (    ( x_2^{2} ,x_2 x_3^{q+m},  x_3^{q+m+1}  )^{n} ,    x_1, x_4)$ 
which implies that 
$\m  =( \sqrt{\p^{n} + (x_1, x_4)} ) \subseteq (\sqrt{{\mathcal I}_n + (x_1, x_4)}) \subseteq \m $. 
\end{proof}
  
\begin{proposition}
\label{main theorem 1}
For all $n \geq 1$, 
\beqn
         e   \left( (x_1,x_4); \f{R}{\p^{(n)}}\right)
&=&       \ell \left(  \f{R}{\p^{(n)} + (x_1,x_4)}\right)
=  \ell_R \left(\f{R}
                        { ({\mathcal I}_{n}, x_1, x_4) }\right)
= \ell \left( \f{T}{I_n} \right)\\
&=& (2(q+m)+1){n+1 \choose 2}.
\eeqn
\end{proposition}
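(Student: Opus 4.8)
The plan is to prove the whole chain of equalities at once by a \emph{squeeze}: the common value $(2(q+m)+1)\binom{n+1}{2}$ will appear simultaneously as an upper bound for $\ell\bigl(R/(\p^{(n)}+(x_1,x_4))\bigr)$, coming from the subideal $\mathcal I_n$, and as a lower bound for it, coming from the Hilbert--Samuel multiplicity $e\bigl((x_1,x_4);R/\p^{(n)}\bigr)$ computed independently by localizing at $\p$. Matching the two bounds forces every term to be equal.

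First I would dispose of the right-hand half of the chain. By Proposition~\ref{description of In}(\ref{description of In one}) we have $\mathcal I_n\subseteq\p^{(n)}$, so there is a surjection $R/(\mathcal I_n+(x_1,x_4))\twoheadrightarrow R/(\p^{(n)}+(x_1,x_4))$ and hence $\ell\bigl(R/(\p^{(n)}+(x_1,x_4))\bigr)\leq \ell\bigl(R/(\mathcal I_n+(x_1,x_4))\bigr)$. Passing to $T=R/(x_1,x_4)\cong\kk[x_2,x_3]$ and using that under a surjection the image of a product of ideals is the product of the images, the extension of $\mathcal I_n$ is $\sum_{n_1+2n_2=n}(fT)^{n_2}(\p T)^{n_1}=I_n$, the ideal of \eqref{definition of Ji}. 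Thus $R/(\mathcal I_n+(x_1,x_4))\cong T/I_n$, and Proposition~\ref{main theorem} gives $\ell\bigl(R/(\mathcal I_n+(x_1,x_4))\bigr)=\ell(T/I_n)=(2(q+m)+1)\binom{n+1}{2}$. This already settles the third and fourth equalities and provides the desired upper bound.

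Next I would compute the multiplicity. Since $\p^{(n)}$ is $\p$-primary, $\sqrt{\p^{(n)}+(x_1,x_4)}=\sqrt{\p+(x_1,x_4)}=\m$ by Lemma~\ref{cohen-macaulay}(\ref{cohen-macaulay-1}), so $x_1,x_4$ is a system of parameters for the two-dimensional module $R/\p^{(n)}$ and $e\bigl((x_1,x_4);R/\p^{(n)}\bigr)$ is defined. The unique minimal prime of $R/\p^{(n)}$ of dimension $2$ is $\p$, so the associativity formula for Hilbert--Samuel multiplicities collapses to a single term,
\[
e\bigl((x_1,x_4);R/\p^{(n)}\bigr)=\ell_{R_\p}\bigl((R/\p^{(n)})_\p\bigr)\cdot e\bigl((x_1,x_4);R/\p\bigr).
\]
By definition of the symbolic power $(R/\p^{(n)})_\p=R_\p/\p^nR_\p$, and $R_\p$ is a regular local ring of dimension $2$, so $\ell_{R_\p}(R_\p/\p^nR_\p)=\binom{n+1}{2}$; together with $e\bigl((x_1,x_4);R/\p\bigr)=2(q+m)+1$ from Lemma~\ref{cohen-macaulay}(\ref{corollary multiplicity}) this yields $e\bigl((x_1,x_4);R/\p^{(n)}\bigr)=(2(q+m)+1)\binom{n+1}{2}$.

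Finally I would invoke the general inequality $e\bigl((x_1,x_4);R/\p^{(n)}\bigr)\leq\ell\bigl(R/(\p^{(n)}+(x_1,x_4))\bigr)$, valid for any system of parameters on a module. Chaining it with the upper bound from the second step gives
\[
(2(q+m)+1)\binom{n+1}{2}=e\bigl((x_1,x_4);R/\p^{(n)}\bigr)\leq\ell\Bigl(\tfrac{R}{\p^{(n)}+(x_1,x_4)}\Bigr)\leq (2(q+m)+1)\binom{n+1}{2},
\]
so all the inequalities are equalities; this delivers the remaining equalities of the statement (and, as a bonus, shows $x_1,x_4$ is a regular sequence on $R/\p^{(n)}$). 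The point where care is needed — the main obstacle — is that $\mathcal I_n\subseteq\p^{(n)}$ runs the ``wrong way'' to describe $\p^{(n)}$ directly; the crux is the independent computation of the multiplicity through $R_\p$, which pins the colength from below exactly at $\ell(T/I_n)$ and thereby shows that the a priori smaller ideal $\mathcal I_n$ already captures $\p^{(n)}$ modulo $(x_1,x_4)$.
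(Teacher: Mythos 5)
Your proof is correct and follows essentially the same squeeze as the paper's: bound $\ell\bigl(R/(\p^{(n)}+(x_1,x_4))\bigr)$ above by $\ell(T/I_n)$ via $\mathcal I_n\subseteq\p^{(n)}$ together with Proposition~\ref{main theorem}, compute $e\bigl((x_1,x_4);R/\p^{(n)}\bigr)$ independently through the associativity formula localized at $\p$, and match the two. If anything, your use of the always-valid inequality $e\leq\ell$ in place of the paper's opening equality $e=\ell$ (which it justifies by Cohen--Macaulayness of $R/\p^{(n)}$, a fact only deduced afterwards in Corollary~\ref{cor-cohen-macaulay}) is a slight tidying of the same argument.
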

\begin{proof} 
From  Proposition~\ref{description of In}(\ref{description  of In one}),   ${\mathcal I}_n  \subseteq   \p^{(n)}$.  
Hence,

\beq
\label{equality of all terms 1}\nonumber
 e\left( (x_1, x_4);\f{R}
                         {\p^{(n)}}\right)
&=&    \ell_R \left(\f{R}
                                  {\p^{(n)}+ (x_1, x_4)}\right) 
                                    \hspace{0.9in} [\mbox   {as $R/ \p^{(n)}$  is Cohen-Macaulay}]\\ \nonumber
 &\leq&         \ell_R \left(\f{R}
                                  { ({\mathcal I}_{n}, x_1, x_4) }\right)\\ \nonumber
&=&        \ell_R \left( \f{T}{I_n}\right)\\ \nonumber
                                            &=&        \ell_{R/(x_1, x_4)} \left( \f{T}{I_n}\right)
                                            \hspace{1.7in} \mbox{[as $(x_1, x_4) \subseteq \ann(T/I_n)$]}\\ \nonumber
 &=&              \ell_{T} 
              \left(\f{T}
                       {I_n}\right)      \hspace{3.4in} 
               \mbox{  [\eqref{definition of Ji}]  }  \\ \nonumber    
& = &    (2(q+m)+1) \binom{n+1}{2}   
  \hspace{1.7in} 
               \mbox{[Proposition~\ref{main theorem}]} \\     \nonumber    
 &=& e \left( (x_1, x_4);\f{R}{\p}\right)
              \ell_{R_{\p}}
              \left(\f{R_{\p}}{\p^{n}R_{\p}}\right)   \hspace{1.55in} 
               \mbox{[Lemma~\ref{cohen-macaulay}(\ref{corollary multiplicity})] } \\ \nonumber
&=& e \left( (x_1,x_4);\f{R}
                     {\p}\right)
             \ell_{R_{\p}}
             \left(\f{R_{\p}}
                       {\p^{(n)}R_{\p}}\right)     
                        \hspace{1.1in} [\mbox{since } \p^{(n)}R_{\p}=\p^{n}R_{\p}]   \\          
&=&    e\left( (x_1, x_4);\f{R}
                         {\p^{(n)}}\right)
                \hspace{2.5in} \mbox{[by \cite[1.8]{hio}]}.    
\eeq
Thus equality holds in (\ref{equality of all terms 1}) 
which proves the theorem.
\end{proof}

We end this section by explicitly describing the generators of $\p^{(n)}$ for all $n \geq 1$. 

\begin{theorem}
\label{symbolic power}
For all $n \geq 0$, 
\been
\item
\label{symbolic power one}
 $\p^{(n)} = {\mathcal{I}}_n $.
\item
\label{symbolic power two}
$\p^{(2n)} = (\p^{(2)} )^n$
 and $\p^{(2n+1)} = \p (\p^{(2)})^{n}$, 
  \eeen
\end{theorem}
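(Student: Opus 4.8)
The plan is to prove Theorem~\ref{symbolic power} by leveraging the multiplicity computation already established in Proposition~\ref{main theorem 1}, turning the inclusion ${\mathcal I}_n \subseteq \p^{(n)}$ (Proposition~\ref{description of In}(\ref{description of In one})) into an equality by a length/multiplicity comparison. The central idea is that both ${\mathcal I}_n$ and $\p^{(n)}$ are ideals with the same multiplicity with respect to the regular sequence $x_1, x_4$, and the inclusion is an equality precisely because the associated colength does not drop.

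\medskip

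\emph{Part (\ref{symbolic power one}).} First I would record the chain ${\mathcal I}_n \subseteq \p^{(n)}$ from Proposition~\ref{description of In}(\ref{description of In one}), which gives a surjection $R/{\mathcal I}_n \twoheadrightarrow R/\p^{(n)}$ and hence $R/({\mathcal I}_n + (x_1,x_4)) \twoheadrightarrow R/(\p^{(n)} + (x_1,x_4))$. Since $R/\p^{(n)}$ is Cohen-Macaulay (so that $x_1, x_4$ is a regular sequence and the multiplicity equals the colength modulo $(x_1,x_4)$), the key observation is the equality of lengths already proved inside Proposition~\ref{main theorem 1}, namely
\[
\ell_R\!\left(\f{R}{\p^{(n)} + (x_1, x_4)}\right)
= \ell_R\!\left(\f{R}{({\mathcal I}_n, x_1, x_4)}\right)
= (2(q+m)+1)\binom{n+1}{2}.
\]
A finite-length surjection between modules of equal length is an isomorphism, so ${\mathcal I}_n + (x_1,x_4) = \p^{(n)} + (x_1,x_4)$. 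To pass from this equality modulo $(x_1,x_4)$ back to the ideals themselves, I would use that $x_1, x_4$ is a regular sequence on $R/{\mathcal I}_n$ as well: this follows because ${\mathcal I}_n + (x_1,x_4)$ is $\m$-primary by Proposition~\ref{description of In}(\ref{description of In three}), forcing $\dim R/{\mathcal I}_n \le 2$, together with the fact that ${\mathcal I}_n$ contains $\p^n$ so $\dim R/{\mathcal I}_n = 2$ and $x_1,x_4$ is a system of parameters. Then the standard argument---if $N \subseteq M$ are submodules with $N + (x_1,x_4)M = M + (x_1,x_4)M$ and $x_1,x_4$ is regular on $M/N$---yields $N = M$, i.e. ${\mathcal I}_n = \p^{(n)}$.

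\medskip

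\emph{Part (\ref{symbolic power two}).} This should follow cleanly from part (\ref{symbolic power one}) by unwinding the definition \eqref{equation of Jn} of ${\mathcal I}_n$. I would argue that $\p^{(2n)} = {\mathcal I}_{2n} = (f, \p^2)^n$ after collecting terms $f^{n_2}\p^{n_1}$ with $n_1 + 2n_2 = 2n$, and that since $f \in \p^{(2)} = (\p^2, f) = {\mathcal I}_2$ (the $n=2$ case of part (\ref{symbolic power one})), one has ${\mathcal I}_{2n} = ({\mathcal I}_2)^n = (\p^{(2)})^n$; the inclusion $({\mathcal I}_2)^n \subseteq {\mathcal I}_{2n}$ is immediate from the definition, and the reverse inclusion follows because every generator $f^{n_2}\p^{n_1}$ with $n_1 = 2n - 2n_2$ can be grouped into $n_2$ factors of $f$ and $(n-n_2)$ factors of $\p^2$, each lying in ${\mathcal I}_2$. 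The odd case $\p^{(2n+1)} = \p\,(\p^{(2)})^n$ is analogous, factoring out a single copy of $\p$ from the odd surplus.

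\medskip

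I expect the main obstacle to be the descent step in part (\ref{symbolic power one}): converting the equality ${\mathcal I}_n + (x_1,x_4) = \p^{(n)} + (x_1,x_4)$ into ${\mathcal I}_n = \p^{(n)}$. This requires knowing that $x_1,x_4$ form a regular sequence on $R/{\mathcal I}_n$ (not merely on $R/\p^{(n)}$), which is the one point not handed to us directly and must be extracted from the $\m$-primary and dimension properties in Proposition~\ref{description of In}. Everything else is a bookkeeping translation of the already-completed length calculation.
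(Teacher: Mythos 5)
Your part (\ref{symbolic power two}) is correct and is essentially the paper's argument: once $\p^{(n)}={\mathcal I}_n$ is known, collecting the terms $f^{n_2}\p^{n_1}$ with $n_1+2n_2=n$ gives ${\mathcal I}_{2n}=(\p^2+(f))^n=(\p^{(2)})^n$ and ${\mathcal I}_{2n+1}=\p(\p^2+(f))^n$. Your part (\ref{symbolic power one}) also follows the paper's strategy --- the length identity of Proposition~\ref{main theorem 1} together with ${\mathcal I}_n\subseteq\p^{(n)}$ forces ${\mathcal I}_n+(x_1,x_4)=\p^{(n)}+(x_1,x_4)$, and one then descends --- but the descent step, which you correctly single out as the crux, is flawed in two ways.

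First, the hypothesis is placed on the wrong quotient. What the equality modulo $(x_1,x_4)$ gives, via the modular law, is $\p^{(n)}={\mathcal I}_n+\left(\p^{(n)}\cap(x_1,x_4)\right)$; to invoke Nakayama one must replace $\p^{(n)}\cap(x_1,x_4)$ by $(x_1,x_4)\p^{(n)}$, and that requires $x_1,x_4$ to be a regular sequence on $R/\p^{(n)}$, not on $R/{\mathcal I}_n$. Your ``standard argument'' silently trades the ideal sum $N+(x_1,x_4)R$ for $N+(x_1,x_4)M$, and with the regularity hypothesis on $R/N$ the statement is simply false: in $\kk[x,y,z]$ take $N=(z^2)\subseteq M=(z^2,xz)$; then $N+(x,y)=M+(x,y)=(z^2,x,y)$ and $x,y$ is a regular sequence on $R/N$, yet $N\neq M$. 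Second, even the claim that $x_1,x_4$ is regular on $R/{\mathcal I}_n$ is unjustified: a system of parameters is a regular sequence only on a Cohen--Macaulay module, and Cohen--Macaulayness of $R/{\mathcal I}_n$ is not available at this stage (it is a consequence of the theorem, not an input). The repair is the paper's route: $x_1$ is a nonzerodivisor on $R/\p^{(n)}$ because $\p^{(n)}$ is $\p$-primary and $x_1\notin\p$, and the equality $e\left((x_1,x_4);R/\p^{(n)}\right)=\ell\left(R/(\p^{(n)}+(x_1,x_4))\right)$ already contained in Proposition~\ref{main theorem 1} forces $R/\p^{(n)}$ to be Cohen--Macaulay (this is Corollary~\ref{cor-cohen-macaulay}, which can be proved before the theorem); then $(\p^{(n)},x_1):x_4=(\p^{(n)},x_1)$ and $\p^{(n)}:x_1=\p^{(n)}$ let one peel off $x_4$ and then $x_1$ with two applications of Nakayama, exactly as in the paper's proof.
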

\begin{proof} 
(\ref{symbolic power one})
By Proposition~\ref{main theorem 1}  we get 
$\p^{(n) }  + (x_1, x_4)= \I_n  + (x_1, x_4). $ Localizing at $\m$ we get 
$(\p^{(n) }  + (x_1, x_4)) R_{\m}=  (\I_n  + (x_1, x_4))R_{\m} $. 
From  Lemma~\ref{cohen-macaulay}(\ref{cohen-macaulay-1}),  we conclude that $x_1R_{\m},  x_4 R_{\m}$ is a regular sequence on $R_{\m}/ \p^{(n)} R_{\m}$. Hence 
\beqn
(\p^{(n) },  x_1) R_{\m} =   (\I_n , x_1)R_{\m} + x_4 ( (\p^{(n) },  x_1)  : x_4 )R_{\m}
=  (\I_n , x_1)R_{\m}  + x_4  (\p^{(n) },  x_1) R_{\m}.
\eeqn
By  Nakayama's Lemma, $(\p^{(n) },  x_1) R_{\m} =  (\I_n , x_1)R_{\m}$. This implies that 
\beqn
\p^{(n) }R_{\m}
= \I_nR_{\m}  + x_1 ( \p^{(n) }  : x_1)R_{\m}
=  \I_n R_{\m} + x_1  \p^{(n)}R_{\m}.
\eeqn 
Once again by Nakayama's lemma, $\p^{(n)} R_{\m}=  {\mathcal I}_nR_{\m}$. This implies that  
$\p^{(n)} R_{\m}/  {\mathcal I}_nR_{\m} = (0) R_{\m}$. As this is a graded module, 
$\p^{(n)}/  {\mathcal I}_n= (0) $ which implies that $\p^{(n)} =  {\mathcal I}_n$.
 
(\ref{symbolic power two}) For all $n \geq 3$, applying Proposition~\ref{description of In}(\ref{description  of In one})   we get
  \beqn
  \p^{(n) } 
  = {\mathcal I}_n 
  = \sum_{a_1 + 2a_2=n} 
         f^{a_2}  \p_1^{a_1}            
 \subseteq      \sum_{a_1 + 2a_2=n} 
         \p^{a_1} (\p^{(2)})^{a_2}
         &\subseteq& \p^{(n)}. 
  \eeqn
   Hence equality holds and 
    ${\displaystyle  \p^{(n)}
= \sum_{a_1 + 2a_2=n} 
  \p^{a_1} (\p^{(2)})^{a_2}  }$. 
  Thus 
   \beqn
 \p^{(2n)}
&=& \sum_{a_1 + 2a_2=2n} 
  \p^{a_1} (\p^{(2)})^{a_2}  
  = \sum_{a_2 = 0}^n \p^{2n-a_2} (\p^{(2)})^{a_2}  \subseteq \sum_{a_2 = 0}^n (\p^{(2)})^{2n} =  \p^{(2n)}\\
   \p^{(2n+1)}
   &=& \sum_{a_1 + 2a_2=2n+1} 
  \p^{a_1} (\p^{(2)})^{a_2}  
  = \sum_{a_2 = 0}^n \p^{2n+1-a_2} (\p^{(2)})^{a_2}  \subseteq  \sum_{a_2 = 0}^n \p  (\p^{(2)})^{2n} = \p \p^{(2n)} \subseteq    \p^{(2n+1)}.
   \eeqn
   
\end{proof}

\begin{corollary}
\label{cor-cohen-macaulay}
For all $n \geq 1$, $R/ \p^{(n)}$ is Cohen-Macaulay
\end{corollary}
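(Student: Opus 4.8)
The plan is to deduce Cohen--Macaulayness from the classical \emph{multiplicity equals colength} criterion for a system of parameters, rather than by constructing a maximal regular sequence by hand. Since $\sqrt{\p^{(n)}} = \p$ and $\dim(R/\p) = 2$ by Lemma~\ref{cohen-macaulay}(\ref{cohen-macaulay-1}), the quotient $R/\p^{(n)}$ is a standard graded $\kk$-algebra of dimension $2$. Cohen--Macaulayness of such a ring is detected at the irrelevant maximal ideal $\m$, so it suffices to work with the local ring $(R/\p^{(n)})_{\m}$ and show it is Cohen--Macaulay.

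First I would verify that the images of $x_1, x_4$ form a homogeneous system of parameters for $R/\p^{(n)}$. This is immediate from Proposition~\ref{description of In}(\ref{description of In three}): as ${\mathcal I}_n \subseteq \p^{(n)}$, we have ${\mathcal I}_n + (x_1,x_4) \subseteq \p^{(n)} + (x_1,x_4) \subseteq \m$, and since the smaller ideal is already $\m$-primary, so is $\p^{(n)} + (x_1,x_4)$; hence $R/(\p^{(n)}+(x_1,x_4))$ is Artinian and $x_1,x_4$ is a full system of parameters. The key input is then the equality $e\left((x_1,x_4); R/\p^{(n)}\right) = \ell\left(R/(\p^{(n)}+(x_1,x_4))\right)$ already recorded in the statement of Proposition~\ref{main theorem 1}. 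I would invoke the classical criterion (see, e.g., \cite{hio}): for a system of parameters $\underline x$ of a Noetherian local ring $A$ of dimension $d$ one always has $e(\underline x; A) \le \ell(A/\underline x A)$, with equality if and only if $A$ is Cohen--Macaulay (in which case $\underline x$ is a regular sequence). Applying this to $A = (R/\p^{(n)})_{\m}$ and the parameter pair $x_1, x_4$, the displayed equality forces $A$ to be Cohen--Macaulay, and the corollary follows for every $n \ge 1$.

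The one delicate point I expect to be the main obstacle is guaranteeing that this argument is not circular, since the $e=\ell$ equality I borrow from Proposition~\ref{main theorem 1} must itself be available \emph{without} presupposing Cohen--Macaulayness. I would make this explicit: that equality is obtained purely by sandwiching. One uses the general inequality $\ell\left(R/(\p^{(n)}+(x_1,x_4))\right) \ge e\left((x_1,x_4); R/\p^{(n)}\right)$ valid for any system of parameters, the containment ${\mathcal I}_n \subseteq \p^{(n)}$ (so the colength of $\p^{(n)}+(x_1,x_4)$ is at most that of ${\mathcal I}_n+(x_1,x_4) = (I_n, x_1, x_4)$, computed to be $(2(q+m)+1)\binom{n+1}{2}$ in Proposition~\ref{main theorem}), and the associativity formula $e\left((x_1,x_4); R/\p^{(n)}\right) = \ell_{R_{\p}}\left(R_{\p}/\p^{n}R_{\p}\right)\, e\left((x_1,x_4); R/\p\right)$, together with $\ell_{R_{\p}}(R_{\p}/\p^{n}R_{\p}) = \binom{n+1}{2}$ and Lemma~\ref{cohen-macaulay}(\ref{corollary multiplicity}). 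None of these ingredients uses that $R/\p^{(n)}$ is Cohen--Macaulay, so the multiplicity genuinely coincides with the colength, and the criterion then delivers the conclusion independently.
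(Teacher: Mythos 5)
Your proof is correct and is essentially the paper's own argument: the paper likewise deduces Cohen--Macaulayness from the system of parameters $x_1,x_4$ together with the equality $e\left((x_1,x_4);R/\p^{(n)}\right)=\ell\left(R/(\p^{(n)}+(x_1,x_4))\right)$ recorded in Proposition~\ref{main theorem 1}. Your closing paragraph on non-circularity is a worthwhile addition rather than a digression --- the first line of the paper's proof of Proposition~\ref{main theorem 1} is annotated ``as $R/\p^{(n)}$ is Cohen--Macaulay,'' but, exactly as you observe, the chain there starts and ends with $e\left((x_1,x_4);R/\p^{(n)}\right)$, so replacing that first equality by the general inequality $e\le\ell$ still forces every step to be an equality and the multiplicity criterion then yields Cohen--Macaulayness without presupposing it.
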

\begin{proof} As $x_1, x_4$ is a system of parameters in $R/ \p^{(n)}$ and    $e( (x_1,x_4); \f{R}{\p^{(n)}}) =  \ell \left(  R/ \p^{(n)+ (x_1,x_4)}\right)$ (Theorem~\ref{main theorem 1}),    $R/ \p^{(n)}$ is Cohen-Macaulay. 
\end{proof}

\section{Gorenstein property of symbolic blowup algebras}
In this section we discuss the Gorenstein property of symbolic blowup algebras. If $q=1$, then the curves we are interested in has been studied in \cite{morales-simis}. Our proof here is different. 

Throughout this section $U:= \kk[x_1, x_2, x_3, x_4, u_1, u_2, u_3, v]$ and 
$K := (w_1, w_2, z_1, z_2, z_3)$ where 

\beqn
w_1 &=& x_1 u_1 -  x_2u_2 +  x_3^{q+m} u_3\\
w_2 &=& x_2 u_1 - x_3 u_2 + x_1^{q} x_4^{m} u_3\\
z_1 &=&  x_1 v      -  x_3^{q+m-1} u_1 u_3    + u_2^2, \\
z_2 &=& x_2 v      + x_1^{q -1} x_3^{q+m-1}  x_4^m u_3^2  +  u_1 u_2, \\
z_3  &=&  x_3 v    + x_1^{q-1} x_4^{m} u_2 u_3   +  u_1^2.
\eeqn
Before we prove our main result we prove some preliminary results. 

\begin{lemma}
\label{lemma-cm-rees}
 $U/K$ is Gorenstein.
\end{lemma}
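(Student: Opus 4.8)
The plan is to exhibit $U/K$ as a quotient of the polynomial ring $U$ (in $8$ variables) by the ideal $K$ (with $5$ generators) and to show that this is a complete intersection-free Gorenstein ring by computing a finite free resolution that is self-dual. Since $K$ is generated by the entries of a relation arising from the presentation matrix $\phi$ of $\p$ together with the defining data of $f\in\p^{(2)}$, the natural strategy is to recognize $U/K$ as the symbolic Rees algebra $\R_s(\p)$ itself (the variables $u_1,u_2,u_3,v$ being placeholders for the generators $g_3,g_2,g_1$ of $\p$ in degree $1$ and $f$ in degree $2$ of the Rees grading), so that $U/K \cong \bigoplus_{n\geq0}\p^{(n)}t^n$. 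Granting that identification, I would first verify $\dim(U/K)$: the fiber over $R$ has Krull dimension equal to the analytic spread, and since $\p^{(n)}$ is generated as claimed, $U/K$ should have dimension $\dim R + 1 = 5$, making $K$ an ideal of height $3$ in the $8$-dimensional ring $U$.

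\textbf{Main steps.} First I would record the five generators $w_1,w_2,z_1,z_2,z_3$ and check they genuinely cut out the right algebra, using Theorem~\ref{symbolic power}(\ref{symbolic power two}) which tells us $\R_s(\p)$ is generated in Rees-degrees $1$ and $2$ by $g_1,g_2,g_3$ and $f$; the relations $w_1,w_2$ come from the columns of the Hilbert--Burch matrix $\phi$ in \eqref{minimal free resolution}, and $z_1,z_2,z_3$ encode the expressions $x_if\in\p^2$ from Lemma~\ref{lemma on f_2}(\ref{lemma on f_2-1}). Second, since $\height K = 3$ and $K$ is generated by $5$ elements, $U/K$ is not a complete intersection, so to prove Gorensteinness I would construct the minimal free resolution of $U/K$ over $U$ explicitly and show it has length $3$ (so $U/K$ is Cohen--Macaulay, being perfect of the right codimension) with last free module of rank $1$, and that the resolution is self-dual up to a degree shift. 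The cleanest route is to show the resolution is given by a format that is automatically self-dual — for a codimension-$3$ perfect ideal one expects either a complete intersection or a Gorenstein format, and here I would aim to realize the resolution via the Buchsbaum--Eisenbud structure theorem: a codimension-$3$ ideal is Gorenstein if and only if it is generated by the (sub)Pfaffians of an alternating matrix. So I would seek a skew-symmetric matrix (necessarily odd-sized, here $5\times5$) whose $4\times4$ Pfaffians recover $w_1,w_2,z_1,z_2,z_3$ up to sign.

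\textbf{Key construction.} The heart of the proof is to write down the alternating $5\times5$ matrix $N$ with entries among $x_1,x_2,x_3,x_1^qx_4^m,x_3^{q+m},u_1,u_2,u_3$ (and constants) whose five maximal Pfaffians are exactly the five generators of $K$; then the Buchsbaum--Eisenbud theorem gives the self-dual resolution $0\to U\to U^5\to U^5\to U\to U/K\to 0$ and yields Gorensteinness once I confirm $\height K=3$ (which forces the Pfaffian ideal to be of the expected codimension, so the complex is exact and resolves $U/K$). I would determine the entries of $N$ by matching Pfaffian expansions against the given $w_i,z_j$: the two linear-in-$u$ relations $w_1,w_2$ suggest the rows/columns indexed by $v$ pair with $u_1,u_2,u_3$ through the entries $x_1,x_2,x_3,x_3^{q+m},x_1^qx_4^m$, while the quadratic-in-$u$ relations $z_1,z_2,z_3$ arise from the $2\times2$ minors among the $u$-columns.

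\textbf{Anticipated obstacle.} The main difficulty will be pinning down the exact alternating matrix $N$ so that \emph{all five} Pfaffians match simultaneously with consistent signs — the Pfaffian of a $4\times4$ alternating block is $\pm(n_{12}n_{34}-n_{13}n_{24}+n_{14}n_{23})$, and forcing these quadratic combinations to reproduce the mixed linear/quadratic shapes of $w_1,w_2$ versus $z_1,z_2,z_3$ requires a careful, essentially trial-and-verify, choice of where to place the monomials $x_3^{q+m-1}$ and $x_1^{q-1}x_4^m$. Once $N$ is found, verifying $\height K = 3$ (e.g.\ by checking $K$ localizes correctly or by exhibiting a regular sequence of length $3$ inside $K$, using that $U/K$ should be the integral domain $\R_s(\p)$) is comparatively routine, and Gorensteinness then follows immediately from the structure theorem.
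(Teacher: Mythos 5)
Your proposal matches the paper's proof: the paper establishes the lemma by writing down the explicit $5\times 5$ alternating matrix $\phi_2$ whose order-$4$ Pfaffians generate $K$ and invoking the Buchsbaum--Eisenbud criterion to obtain the self-dual length-$3$ resolution $0\to U\to U^5\to U^5\to U\to U/K\to 0$, exactly as you outline. One caution: do not derive $\height K=3$ from an identification $U/K\cong \R_s(\p)$, since in the paper that identification (Proposition~\ref{prop-cm-rees}) is deduced \emph{from} this lemma (it uses the unmixedness of $K$ supplied by Cohen--Macaulayness); instead use your alternative route of checking the grade of the Pfaffian ideal directly, noting that $K\subseteq\ker\tau$ already gives the upper bound $\height K\le 3$.
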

\begin{proof}
Using the Buchsbaum-Eisenbud criterion one can check that minimal free resolution of $U/K$ is 
 \beq
\label{mfr}
0 \lrar U \sta{\phi_3}{\lrar}  U^5\sta{\phi_2} \lrar U^5 \sta{\phi_1}{\lrar} U \lrar \f{U}{K} \lrar 0
\eeq
where
\beqn
\phi_1 = (w_1 ~ w_2 ~ z_1 ~ z_2 ~ z_3), 
\eeqn
\beqn
\phi _2= 
{\left( \begin{array}{ccccc}
0                          & v                            & -x_1^{q-1} x_4^m u_3   & -u_1 &  u_2      \\
-v                          & 0                              &     -u_1                      & u_2  &  -x_3^{q + m-1} u_3 \\  
 x_1^{q-1} x_4^mu_3   & u_1                         &  0                            &   x_3   & - x_2    \\   
  u_1                     &  -u_2                        &  -x_3                       &0        & x_1   \\
  -u_2                     & x_3^{q + m-1} u_3&   x_2                     & - x_1   &  0 \\
\end{array}\right)
}, \hspace{.2in}
\phi_3 = 
\left( {\begin{array}{c}
w_1\\ w_2 \\ z_1 \\z_2 \\ z_3\\
\end{array}}\right).
\eeqn
Moreover, $K$ is generated by the Pfaffians of order $4$ of the anti-symmetic matrix $\phi$ and $R/K$ is Gorenstein. 
\end{proof}

\begin{proposition}
\label{prop-cm-rees}
Let $\tau : \kk[x_1, x_2, x_3, x_4, u_1, u_2, u_3, v]  \to R_{s}(\p)$ be an homomorphism given by $\tau (x_i) = x_i$ ($1 \leq i \leq 4$),  $\tau(u_i) = g_i t$ $1 \leq j \leq 3$  and $\tau(v) = ft^2$.  Then $\ker~\tau = K$.
\end{proposition}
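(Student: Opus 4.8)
The plan is to induce from $\tau$ a graded surjection $\bar\tau\colon U/K \to R_s(\p)$ and to prove it is an isomorphism by a dimension count, so that everything reduces to the two inclusions $K\subseteq \ker\tau$ and $\ker\tau\subseteq K$. That $\tau$ is onto is immediate from Theorem~\ref{symbolic power}(\ref{symbolic power two}): since $\p^{(n)}=\sum_{a_1+2a_2=n}\p^{a_1}(\p^{(2)})^{a_2}$, the algebra $R_s(\p)$ is generated over $R$ in $t$-degrees $1$ and $2$, and as $\p^{(1)}=(g_1,g_2,g_3)$ and $\p^{(2)}=(f)+\p^2$, the elements $\tau(x_i)=x_i$, $\tau(u_i)=g_it$ and $\tau(v)=ft^2$ generate $R_s(\p)$.

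First I would verify $K\subseteq\ker\tau$ by matching each generator of $K$ with an identity already established. Applying $\tau$ to $w_1$ and $w_2$ produces $(x_1g_1-x_2g_2+x_3^{q+m}g_3)t$ and $(x_2g_1-x_3g_2+x_1^qx_4^mg_3)t$, which vanish because they are exactly the two Hilbert--Burch syzygies of $g_1,g_2,g_3$ read off the columns of the matrix in the resolution \eqref{minimal free resolution}. Applying $\tau$ to $z_1,z_2,z_3$ produces $t^2$ times the three expressions $x_1f-x_3^{q+m-1}g_1g_3+g_2^2$, $x_2f+x_1^{q-1}x_3^{q+m-1}x_4^mg_3^2+g_1g_2$ and $x_3f+x_1^{q-1}x_4^mg_2g_3+g_1^2$, each of which is zero by Lemma~\ref{lemma on f_2}(\ref{lemma on f_2-1}). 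Hence $\bar\tau$ is a well-defined graded surjection.

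For the reverse inclusion I would compare dimensions. The resolution \eqref{mfr} has length $3$, so $\projdim_U U/K=3$; since $U$ is a polynomial ring in eight variables, Auslander--Buchsbaum gives $\depth(U/K)=5$, and as $U/K$ is Gorenstein (Lemma~\ref{lemma-cm-rees}), hence Cohen--Macaulay, $\dim U/K=5$. On the other hand $R[\p t]\subseteq R_s(\p)\subseteq R[t]$ with $\dim R[\p t]=\dim R[t]=\dim R+1=5$, so $\dim R_s(\p)=5$. Because $R_s(\p)$ is a domain, $\ker\bar\tau=\ker\tau/K$ is a prime of the equidimensional Cohen--Macaulay ring $U/K$ of height $\dim U/K-\dim R_s(\p)=0$, i.e.\ $\ker\tau$ is a minimal prime of $K$. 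It therefore suffices to prove that $U/K$ is a domain, for then its unique height-$0$ prime is $(0)$ and $\ker\tau=K$.

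The domain property of $U/K$ is the main obstacle. My plan is to localize at $x_1\notin\p$. From $x_1f\in\p^2$ (Lemma~\ref{lemma on f_2}(\ref{lemma on f_2-1})) and $\p^{(2)}=(f)+\p^2$ one gets $x_1\p^{(2)}\subseteq\p^2$, whence $x_1^{\lfloor n/2\rfloor}\p^{(n)}\subseteq\p^n$ by Theorem~\ref{symbolic power}(\ref{symbolic power two}); thus $(\p^{(n)})_{x_1}=(\p^n)_{x_1}$ and $(R_s(\p))_{x_1}=(R[\p t])_{x_1}$, a localization of the ordinary Rees algebra of $\p$ and hence a domain. Since $U/K$ is Cohen--Macaulay, it is enough to check that $x_1$ is a nonzerodivisor on it, which follows once $\dim U/(K+(x_1))\le 4$; then $U/K$ embeds into $(U/K)_{x_1}$, and after inverting $x_1$ the relations $w_1$ and $z_1$ eliminate $u_1$ and $v$, identifying $(U/K)_{x_1}$ with the localized ordinary Rees presentation $(R[\p t])_{x_1}$. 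As a subring of a domain, $U/K$ is then a domain. The delicate point is precisely the bound $\dim U/(K+(x_1))\le 4$: setting $x_1=0$ collapses several monomials in the $w_i$ and $z_j$, and a short case distinction on $q$ (recall $q\ge 1$) is needed to confirm that the resulting ideal still cuts the dimension down by one.
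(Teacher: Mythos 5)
Your overall strategy---show $\ker\tau$ is a minimal prime of $K$ by a height/dimension count, then prove $K$ itself is prime by inverting an element of $R$ and identifying the localization with the ordinary Rees algebra via Huneke's $d$-sequence results---is essentially the same reduction the paper makes. The difference is in which element gets inverted. The paper first shows by prime avoidance that some linear form $\alpha=a_1x_1+a_2x_2+a_3x_3$ lies outside every associated prime of $K$: if $(x_1,x_2,x_3)\subseteq Q$ for some $Q\in\ass(K)$, then $z_1,z_3\in Q$ force $u_2,u_1\in Q$, so $\height Q\geq 5>3$, a contradiction. It then localizes at $\alpha$, uses $\alpha v=a_1z_1+a_2z_2+a_3z_3-\beta$ to eliminate $v$, lands in $\kk[x_1,\dots,x_4,u_1,u_2,u_3][1/\alpha]/(w_1,w_2)\cong R(\p)[1/\alpha]$, and contracts. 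The point of choosing $\alpha$ this way is that its regularity on $U/K$ comes for free, so the contraction $KU[1/\alpha]\cap U=K$ is immediate.

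Your version inverts the specific element $x_1$, and that is where the one genuine gap sits: the argument collapses unless $x_1$ is a nonzerodivisor on $U/K$, and you explicitly defer the verification that $\dim U/(K+(x_1))\leq 4$. The claim is in fact true---on $V(K+(x_1))$, over the locus $x_2x_3\neq 0$ the relations $w_1,w_2,z_1$ determine $u_2,u_1,v$ from $(x_2,x_3,x_4,u_3)$ and the remaining relations are then automatic, while on $x_2=0$ or $x_3=0$ the relations force $u_1,u_2$ (and possibly $u_3$ or $v$) to vanish, so every stratum has dimension at most $4$ for both $q=1$ and $q>1$---so your route can be completed. But as written the proposal pushes the only nontrivial verification into an unproved assertion; either carry out that case analysis or, more cleanly, replace $x_1$ by a general linear form in $(x_1,x_2,x_3)$ as the paper does. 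A secondary gloss: ``$w_1$ and $z_1$ eliminate $u_1$ and $v$, identifying $(U/K)_{x_1}$ with $(R[\p t])_{x_1}$'' silently uses that $z_2,z_3$ become redundant modulo $(w_1,w_2,z_1)$ after inverting $x_1$; this is true (for instance $x_1z_2=x_2z_1+u_2w_1+x_3^{q+m-1}u_3w_2$), and can also be seen softly because the surjections $R(\p)_{x_1}\twoheadrightarrow(U/K)_{x_1}\twoheadrightarrow\R_s(\p)_{x_1}=R(\p)_{x_1}$ compose to an isomorphism, but it needs to be said.
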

\begin{proof} 
By Lemma~\ref{lemma-cm-rees} all associated primes of $K$ are minimal primes. As $K \subseteq \ker~\tau$ and  $\height(K) =\height(\ker~\tau)=3$, $\ker~\tau$ is a minimal prime of $K$.  

We claim that there exists $a_1, a_2, a_3 \in \kk$, $(a_1, a_2, a_3) \not = (0,0,0)$ such that 
$\alpha = a_1x_1 + a_2x_2 + a_3x_3  \not \in \displaystyle{\bigcup_{P \in \ass(K)}P}$. Otherwise $(x_1, x_2, x_3) \subseteq  \displaystyle{\bigcup_{P \in \ass(K)} P}$ which implies that $(x_1, x_2, x_3)  + K \subseteq \displaystyle{\bigcup_{P \in \ass(K)}P} $ and hence 
$(x_1, x_2, x_3, u_1, u_2) \subseteq \displaystyle{\bigcup_{P \in \ass(K)}P} $. Consequently,  $(x_1, x_2, x_3, u_1, u_2) \subseteq Q$ for some $Q \in \ass(K)$. This implies that $\height(Q) \geq 5$ which leads to a contradiction and proves the claim. 

Fix $\alpha = a_1x_1 + a_2x_2 + a_3x_3  \not \in \displaystyle{\bigcup_{P \in \ass(K)}P}$.
Then
\beqn
&&       \alpha v \\
& = & a_1x_1 v + a_2 x_2 v + a_3x_3 v\\
&= &  a_1z_1 + a_2z_2 + a_3z_3  \\ &&-     
 [a_1 ( -  x_3^{q+m-1}   u_1 u_3    + u_2^2 ) + a_2(  x_1^{q -1} x_3^{q+m-1} x_4^m u_3^2  +  u_1 u_2) 
+ a_3 ( x_1^{q-1} x_4^{m} u_2 u_3   +  u_1^2) ]\\
&=& a_1z_1 + a_2z_2 + a_3z_3  -\beta
\eeqn
where $\beta =a_1 (  -  x_3^{q+m-1}  u_1 u_3    + u_2^2 ) + a_2 (  x_1^{q -1} x_3^{q+m-1} x_4^m u_3^2  +  u_1 u_2)
+ a_3 ( x_1^{q-1} x_4^{m} u_2 u_3   +  u_1^2) $. 
Then
\beqn
              \f{U [1/\alpha]}{ (w_1, w_2, v + \beta/\alpha) }
 \cong    \f{\kk[x_1, x_2, x_3, x_4, u_1, u_2, u_3, v] [1/ \alpha]}{   (w_1, w_2, v + \beta/\alpha) }
\cong   \f{\kk[x_1, x_2, x_3, x_4, u_1, u_2, u_3] [1/ \alpha] }{   (w_1, w_2) }
\cong R(\p)[1 / \alpha]. 
 \eeqn
Recall that  $\p = (g_1, g_2, g_3)$ and $g_1,  g_2, g_3$ form a  $d$-sequence \cite{huneke-d} and 
$$
            R(\p)  = \displaystyle{\bigoplus_{n \geq 0} \p^{n} t^n}
 \cong \f{\kk[x_1, x_2, x_3, x_4, u_1, u_2, u_3] }
 { (w_1, w_2)}. 
$$
Moreover   $R(\p)$ is a domain \cite[Theorem~3.1]{huneke-symm-rees-alg} and   $\dim~ R(\p) = 5$. 
Hence  $(w_1, w_2)$ is a prime ideal of height $2$. Since $\alpha \not \in (w_1, w_2)$, $\height (w_1, w_2) U[1/ \alpha] =2$. 
This implies that  $(w_1, w_2, v + \beta/\alpha)U[1/ \alpha]$ is a prime ideal and     $\height (w_1, w_2, v +  \beta/\alpha)U[1/ \alpha] = 3$. 
In the ring $U[1/ \alpha]$, 
\beqn
(w_1, w_2, v + \beta/\alpha) U[1/ \alpha]
&= &(w_1, w_2, (a_1z_1 + a_2z_2 + a_3z_3) / \alpha) U[ 1 / \alpha]\\
& \subseteq  &K U [1 / \alpha]\\
 &\subseteq & (\ker~\tau )U [ 1/ \alpha]. 
  \eeqn
   Since $\alpha \not \in \ker~\tau$,  $ \height(w_1, w_2, v + \beta/ \alpha) U[1/ \alpha]    =\height ( (\ker~\tau) U[1/ \alpha ]) =3$
and 
\beq
\label{equality of primes}
(w_1, w_2, v + \beta/ \alpha) U[1/ \alpha] =K U [1 /\alpha]    = (\ker~\tau) U[1/ \alpha] . 
\eeq
  By our choice of $\alpha$ and (\ref{equality of primes})
\beqn
 K = K U [1/ \alpha]   \cap U =(\ker~\tau)  U [1/ \alpha]   \cap U =   \ker~\tau. 
\eeqn
\end{proof}

 \begin{theorem} 
 \label{cm-rees}
\been
\item
 \label{cm-rees-zero}
  $\R_s(\p) =R[\p t, ft^2]  $.
\item
 \label{cm-rees-one}
 $\R_s(\p)$ is Cohen-Macaulay.

\item
 \label{cm-rees-two}
$\R_s(\p)$ is Gorenstein.

\item
\label{cm-fiber}
The symbolic fiber cone $F_{s}(\p) = \displaystyle{\bigoplus_{n \geq 0} \p^{(n)}  /  \m \p^{(n)}}$ is Cohen-Macaulay. 
\eeen
\end{theorem}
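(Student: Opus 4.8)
The plan is to establish the four parts in the order \ref{cm-rees-zero}--\ref{cm-fiber}, since each is used in proving the next. For part~(\ref{cm-rees-zero}) I would observe that the statement is merely a regrading of Theorem~\ref{symbolic power}(\ref{symbolic power one}). The degree-$n$ graded component of the subalgebra $R[\p t, ft^2] \subseteq R[t]$ is
\[
\left( \sum_{n_1 + 2n_2 = n} \p^{n_1} f^{n_2} \right) t^n = {\mathcal I}_n\, t^n,
\]
which by Theorem~\ref{symbolic power}(\ref{symbolic power one}) equals $\p^{(n)} t^n$. Hence $\R_s(\p) = \bigoplus_{n \geq 0} \p^{(n)} t^n = R[\p t, ft^2]$, and in particular $\R_s(\p)$ is finitely generated over $R$.

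For parts (\ref{cm-rees-one}) and (\ref{cm-rees-two}) I would pass to the presentation supplied by Proposition~\ref{prop-cm-rees}. The homomorphism $\tau$ sends the $x_i$ onto the generators of $R$, the $u_j$ onto the $g_j t$, and $v$ onto $ft^2$; by part~(\ref{cm-rees-zero}) these images generate $\R_s(\p)$, so $\tau$ is surjective and therefore $\R_s(\p) \cong U/\ker\tau = U/K$ by Proposition~\ref{prop-cm-rees}. Lemma~\ref{lemma-cm-rees} already identifies $U/K$ as Gorenstein, its resolution \eqref{mfr} being the self-dual Pfaffian complex of the antisymmetric matrix $\phi_2$. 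Thus $\R_s(\p)$ is Gorenstein, which is part~(\ref{cm-rees-two}), and since Gorenstein rings are Cohen-Macaulay this gives part~(\ref{cm-rees-one}).

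For the fiber cone in part~(\ref{cm-fiber}) I would compute $F_s(\p) = \R_s(\p) \otimes_R R/\m = \R_s(\p)/\m\R_s(\p) \cong U/(K + \m U)$, using the isomorphism $\R_s(\p) \cong U/K$ from the previous step together with $\m = (x_1,x_2,x_3,x_4)$. The key calculation is to reduce the five generators of $K$ modulo $(x_1,x_2,x_3,x_4)$: because $q + m \geq 2$ and $m \geq 1$, every monomial of $w_1$ and $w_2$ lies in $(x_1,\dots,x_4)$, as do the terms $x_1 v$ and $x_3^{q+m-1}u_1 u_3$ of $z_1$, the terms $x_2 v$ and $x_1^{q-1}x_3^{q+m-1}x_4^m u_3^2$ of $z_2$, and the terms $x_3 v$ and $x_1^{q-1}x_4^m u_2 u_3$ of $z_3$. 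Hence $K + \m U = (x_1,x_2,x_3,x_4) + (u_2^2, u_1 u_2, u_1^2)$, and
\[
F_s(\p) \cong \frac{\kk[u_1,u_2,u_3,v]}{(u_1,u_2)^2}.
\]
I would then note that $1, u_1, u_2$ form a free basis of this ring over the polynomial subring $\kk[u_3,v]$, so $F_s(\p)$ is a finite free, hence maximal Cohen-Macaulay, module over $\kk[u_3,v]$; since $\dim F_s(\p) = \dim \kk[u_3,v] = 2$, it follows that $F_s(\p)$ is Cohen-Macaulay. (Equivalently, $u_3, v$ is a homogeneous system of parameters that is a regular sequence on $F_s(\p)$.)

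Parts (\ref{cm-rees-zero})--(\ref{cm-rees-two}) are essentially bookkeeping, since the homological heavy lifting is already contained in Theorem~\ref{symbolic power}, Proposition~\ref{prop-cm-rees} and Lemma~\ref{lemma-cm-rees}; the only genuine check is the surjectivity of $\tau$, which reduces to part~(\ref{cm-rees-zero}). The main obstacle, such as it is, lies in part~(\ref{cm-fiber}): one must reduce $K$ modulo $(x_1,\dots,x_4)$ correctly, and it is precisely here that the hypotheses $q,m \geq 1$ (giving $q+m-1 \geq 1$) are needed to kill all the mixed terms, leaving the clean monomial quotient $\kk[u_1,u_2,u_3,v]/(u_1,u_2)^2$. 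Once the fiber cone is identified this explicitly, its Cohen-Macaulayness follows at once from the freeness over a Noether normalization.
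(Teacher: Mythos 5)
Your proposal is correct and follows essentially the same route as the paper: part (1) is the regrading of Theorem~\ref{symbolic power}, parts (2)--(3) come from the presentation $\R_s(\p)\cong U/K$ via Proposition~\ref{prop-cm-rees} together with the Pfaffian resolution in Lemma~\ref{lemma-cm-rees}, and part (4) reduces $K$ modulo $\m$ to identify $F_s(\p)\cong \kk[u_1,u_2,u_3,v]/(u_1,u_2)^2$ with $u_3,v$ a regular system of parameters. Your explicit verification that the mixed terms of the $z_i$ die modulo $\m$ (using $q+m-1\geq 1$ and $m\geq 1$) is a welcome elaboration of a step the paper leaves implicit, but the argument is the same.
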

\begin{proof} 
(\ref{cm-rees-zero}) The proof  follows from  Theorem~\ref{symbolic power}.
 
 (\ref{cm-rees-one})  and (\ref{cm-rees-two}) follows from Lemma~\ref{lemma-cm-rees} and Proposition~\ref{prop-cm-rees}.

(\ref{cm-fiber})
Let $\m = (x_1, x_2, x_3, x_4)$. Then ${\mathcal F}_{s}(\p) \cong U/ (K + \m) \cong \kk[u_1, u_2 , u_3, v]/ (u_1^2, u_1u_2,  u_2^2)$. 
Since $\dim( {\mathcal F}_{s}(\p)  )=2$ and the images of  $u_3$ and $v$ form a regular sequence in ${\mathcal F}_{s}(\p)$, ${\mathcal F}_{s}(\p)$ is Cohen-Macaulay.
\end{proof}

\section{Invariants associated to symbolic powers}
In this section we compute certain invariants namely the resurgence, the Waldschmidt constant, regularity associated to the symbolic powers of $\p$.
Finally we  compare these invariants.

\subsection{Containment}\hfill\\
In order to compare the symbolic powers and ordinary powers  C. Bocci and B. Harbourne  in \cite{BH} defined the resurgence of an ideal $I$ in $R$ as 
 $$
 {\displaystyle 
 \rho(I):=\sup\left\{\frac{n}{r}:  I^{(n)}  \nsubseteq I^r \right\}.}
 $$
 We can also compute the resurgence in the following way.
For any ideal $I \subseteq R$ let  $\rho_n(I):=\min\{r: I^{(n)} \nsubseteq I^r\}.$ Then
 $$
 {\displaystyle 
 \rho(I):=\sup\left\{\frac{n}{\rho_n(I)}:  n \geq 1\right\}.}
 $$

\begin{lemma}
\label{lemma containment-1}
For all $k \geq 1$ and $j=0,1$,
$\p^{( k(2q+2m)   + j)}  \subseteq \p^{k (    2q+2m-1) + j} $ and 
$\p^{( k(2q+2m)   + j)}  \not \subseteq \p^{k (    2q+2m-1) + j +1} $.
 \end{lemma}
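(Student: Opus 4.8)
The plan is to exploit the explicit structure of the symbolic powers established in Theorem~\ref{symbolic power}, which tells us that $\p^{(2n)} = (\p^{(2)})^n$ and $\p^{(2n+1)} = \p(\p^{(2)})^n$. Writing $D := 2q+2m$ and noting that $f$ generates $\p^{(2)}$ modulo $\p^2$ with $f^{q+m+1}\in\p^{2(q+m)+1}$ by Lemma~\ref{lemma on f_2}(\ref{lemma on f_2-3}), the key numerical input is precisely this relation $f^{D/2+\text{stuff}}\in\p^{(\cdot)}$. First I would set up the dictionary between the exponents: the claim asserts that $\p^{(kD+j)}\subseteq\p^{k(D-1)+j}$ and the containment fails one step further. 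So the heart is an estimate comparing symbolic exponent $kD+j$ against ordinary exponent $k(D-1)+j$.

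For the containment direction, I would use Theorem~\ref{symbolic power}(\ref{symbolic power two}) to write $\p^{(kD+j)}$ as a sum of terms $\p^{a_1}(\p^{(2)})^{a_2}$ with $a_1+2a_2 = kD+j$, and then feed each $(\p^{(2)})^{a_2}$ through the divided-power estimate coming from Lemma~\ref{lemma on f_2}(\ref{lemma on f_2-3}): since $f^{q+m+1}\in\p^{2(q+m)+1}=\p^{D+1}$, high powers of $f$ (equivalently of $\p^{(2)}$) gain an extra factor of $\p$ for every $q+m+1$ copies. Grouping the $a_2$ copies of $f$ into blocks of size $q+m+1$ upgrades the ordinary exponent, and a bookkeeping computation should show the total ordinary exponent is at least $k(D-1)+j$. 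The cleanest route is to reduce to the extremal term $(\p^{(2)})^{kD/2+\dots}$ and verify the single inequality $2a_2 - \lfloor a_2/(q+m+1)\rfloor \ge (D-1)/D\cdot(2a_2)$ in the relevant range, then handle the $j=0,1$ parity cases separately.

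For the non-containment direction I would argue via the Waldschmidt/degree bound, which is the standard obstruction to containment: if $\p^{(n)}\subseteq\p^r$ then $\alpha(\p^{(n)})\ge r\cdot\alpha(\p)$, since every generator of $\p$ has degree at least $\alpha(\p)=2$ (the quadric $g_3 = x_1x_3-x_2^2$). Using the explicit generators of $\p^{(n)}={\mathcal I}_n$ from \eqref{equation of Jn}, the least degree $\alpha(\p^{(kD+j)})$ is realized by the extremal power of $f$ (degree $2(q+m)+1 = D+1$ each) times the appropriate power of $g_3$, giving a sharp value of $\alpha$. Comparing $\alpha(\p^{(kD+j)})$ with $(k(D-1)+j+1)\cdot 2$ should produce a strict violation, certifying $\p^{(kD+j)}\not\subseteq\p^{k(D-1)+j+1}$.

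The main obstacle I expect is the exponent bookkeeping in the containment direction: one must track how many full blocks of $f^{q+m+1}$ fit into $(\p^{(2)})^{a_2}$ across all the summands $a_1+2a_2 = kD+j$ and confirm that the \emph{minimum} gain over all these summands still meets the target ordinary exponent $k(D-1)+j$. The parity split ($j=0$ versus $j=1$) and the boundary behavior of the floor function $\lfloor a_2/(q+m+1)\rfloor$ are where sign errors and off-by-one mistakes are likely, so I would isolate the dominant summand and verify the inequality there, then check that no other summand does worse. Establishing that $\alpha(\p^{(n)})$ is exactly as predicted (rather than merely bounded) for the non-containment direction is the secondary technical point, but it follows directly from the explicit generating set in \eqref{equation of Jn}.
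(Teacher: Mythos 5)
The non-containment half of your proposal has a genuine gap: the degree obstruction is vacuous for this ideal. It is true that $\p^{(n)}\subseteq\p^r$ forces $\alpha(\p^{(n)})\ge r\,\alpha(\p)=2r$, but by Theorem~\ref{symbolic power} together with $\deg f=2(q+m)+1>4$ one gets $\alpha(\p^{(n)})=2n$ exactly, the minimal-degree element being $g_3^{\,n}$ rather than any power of $f$ (each symbolic unit of $f$ costs degree $(2(q+m)+1)/2>2$). So for $n=k(2q+2m)+j$ and $r=k(2q+2m-1)+j+1$ your criterion requires $2\bigl(k(2q+2m)+j\bigr)<2\bigl(k(2q+2m-1)+j+1\bigr)$, i.e.\ $0<-2k+2$, which fails for every $k\ge 1$: the inequality points the wrong way. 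This is unavoidable for any total-degree argument, since by Theorem~\ref{theorem-waldschmidt} we have $\alpha(\p)/\gamma(\p)=1$, so $\alpha$-comparisons can never certify a resurgence ratio above $1$, whereas the lemma is precisely what makes $\rho(\p)>1$. The paper instead uses a finer, weighted obstruction: take the witness $f^{k(q+m)}$ (resp.\ $f^{k(q+m)}g_2$ for $j=1$), reduce modulo $(x_3)$ so that $f\equiv -x_1^{2q+1}x_4^{2m}$ and $\p\equiv(x_1^qx_2x_4^m,\,x_1^{q+1}x_4^m,\,x_2^2)$, note that any $x_2$-free monomial of $\p^N \bmod (x_3)$ must be divisible by $(x_1^{q+1}x_4^m)^N$, and compare $x_1$-exponents: $(2q+1)k(q+m)<(q+1)\bigl(k(2q+2m-1)+j+1\bigr)$. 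You would need to replace your $\alpha$-comparison by an argument of this kind.

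The containment half is essentially the paper's argument with heavier bookkeeping. The paper writes $\p^{(k(2q+2m))}\p^j=\bigl((\p^2+(f))^{q+m}\bigr)^k\p^j$ and observes that each factor $(\p^2+(f))^{q+m}=\sum_i f^{q+m-i}\p^{2i}$ lies in $\p^{2(q+m)-1}$ by Lemma~\ref{lemma on f_2}(\ref{lemma on f_2-3}), so the deficit is exactly one per block of $q+m$ copies of $f$ and the total exponent is $k(2q+2m-1)+j$. Your version --- bounding $\lceil a_2/(q+m+1)\rceil\le k$ over all summands $\p^{a_1}(\p^{(2)})^{a_2}$ with $a_1+2a_2=k(2q+2m)+j$ --- does close up (note it must be a ceiling, not a floor, since the leftover copies of $f$ also cost one), but it is the same mechanism; the paper's factorization just makes the arithmetic immediate.
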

\begin{proof} For all $k \geq 1$ and $j=0,1$,  by Lemma~\ref{lemma on f_2}(\ref{lemma on f_2-3})  and Theorem~\ref{symbolic power} we get
\beqn
   \p^{(k(2q+2m))} \p^j
= ((\p^2+ f)^{q+m})^k \p^j
= (\sum_{i=0}^{q+m} f^{q+m-i} \p^{2i})^k \p^j
\subseteq ( \p^{2(q+m-i)-1} \p^{2i})^k \p^j
= \p^{k ( 2q+2m-1) + j} .
\eeqn 
Let $j=0$. We will show that  $\p^{( k(2q+2m) )}  \not \subseteq \p^{k (    2q+2m-1) + 1} $. 
By  Lemma~\ref{lemma on f_2}(\ref{lemma on f_2-2}) and   Theorem~\ref{symbolic power} we get  $f^{k(q+m)} \in \p^{(k(2q+2m))}$. 
 Observe that, 
$f^{k(q+m)} \cong (x_1^{2q+1}  x_4^{2m})^{k(q+m)} \mod(x_3)$. Also 
$$\p^{k (    2q+2m-1) + 1}  =  ( x_1^q x_2  x_4^m,      x_1^{q+1} x_4^{m} ,  x_2^2)^{k (    2q+2m-1) + 1} \mod(x_3)$$
 and hence 
$(x_1^{q+1} x_4^{m})^{  k (    2q+2m-1) + 1}$ is a minimal generator of $\p^{k (    2q+2m-1) + 1} \mod(x_3)$.   
Since 
\beqn
k(2q+1)(q+m) 
&=&  k(2q^2 + 2qm  + q + m)\\
&<&  k(2q^2 + 2qm -q + 2q + 2m -1) + q+1\\
&=& k(q+1)(2q+2m-1)  +q+1,\eeqn
comparing the powers of $x_1$ we get $f^{ k(q+m)} \not \in \p^{k (    2q+2m-1) +1} $. Hence the lemma is true for $j=0$.
Using the similar argument we can show that 
$f^{k(q+m)} g_2 \in  \p^{((k(2q+2m)) +1)} \setminus  \p^{k (    2q+2m-1) +2}$. This shows that the lemma is true for $j=1$.
\end{proof}

\begin{lemma}
\label{lemma containment-2}
For all $k \geq 0$ and  $j =2, \ldots, 2q+2m-1$, 
$\p^{( k(2q+2m)     + j)}  \subseteq \p^{k (    2q+2m-1) + j-1} $
and $\p^{( k(2q+2m)     + j)}   \not \subseteq \p^{k (    2q+2m-1) + j} $.
\end{lemma}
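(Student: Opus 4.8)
The plan is to prove the two directions separately, both resting on the explicit description $\p^{(2n)} = (\p^{(2)})^n$, $\p^{(2n+1)} = \p(\p^{(2)})^n$ from Theorem~\ref{symbolic power}, on $\p^{(2)} = \p^2 + (f)$ (a special case of the same theorem together with the definition of $\I_n$), and on the containments $f^l \in \p^{2l-1}$ for $l \le q+m+1$ from Lemma~\ref{lemma on f_2}(\ref{lemma on f_2-3}). Throughout I write $j = 2s$ or $j = 2s+1$; since $2 \le j \le 2q+2m-1$, in either parity one gets $1 \le s \le q+m-1$, which keeps every invoked instance of $f^l \in \p^{2l-1}$ inside its permitted range.

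For the containment $\p^{(k(2q+2m)+j)} \subseteq \p^{k(2q+2m-1)+j-1}$, I would first record two facts obtained by expanding $(\p^2+f)^N$ and applying $f^l \in \p^{2l-1}$ term by term, exactly as in the proof of Lemma~\ref{lemma containment-1}: namely $(\p^{(2)})^{q+m} \subseteq \p^{2q+2m-1}$ and $(\p^{(2)})^s \subseteq \p^{2s-1}$ for $1\le s \le q+m-1$. When $j=2s$ is even we have $k(2q+2m)+j = 2(k(q+m)+s)$, so Theorem~\ref{symbolic power} gives
\[
\p^{(k(2q+2m)+j)} = (\p^{(2)})^{k(q+m)+s} = \big((\p^{(2)})^{q+m}\big)^k (\p^{(2)})^s \subseteq \p^{k(2q+2m-1)}\,\p^{2s-1} = \p^{k(2q+2m-1)+j-1}.
\]
When $j = 2s+1$ is odd the index equals $2(k(q+m)+s)+1$, and the same factorisation together with the extra factor $\p$ coming from $\p^{(2n+1)} = \p(\p^{(2)})^n$ yields $\p^{(k(2q+2m)+j)} \subseteq \p\cdot \p^{k(2q+2m-1)}\,\p^{2s-1} = \p^{k(2q+2m-1)+2s} = \p^{k(2q+2m-1)+j-1}$, settling the containment for both parities and all $k \ge 0$.

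For the non-containment I would produce an explicit witness and test it modulo $x_3$, imitating Lemma~\ref{lemma containment-1}. Reducing generators gives $g_2 \equiv x_1^{q+1}x_4^m$ and $f \equiv -x_1^{2q+1}x_4^{2m} \pmod{x_3}$, and $\p \bmod (x_3)$ is the monomial ideal $J := (x_1^q x_2 x_4^m,\, x_1^{q+1}x_4^m,\, x_2^2)$ in $\kk[x_1,x_2,x_4]$. The key observation (using Proposition~\ref{prop-herzog} on the generators of $J^N$) is that a monomial free of $x_2$ lies in $J^N$ if and only if it is divisible by $(x_1^{q+1}x_4^m)^N$, i.e. if and only if its $x_1$-exponent is at least $(q+1)N$, since the generators $x_1^q x_2 x_4^m$ and $x_2^2$ cannot divide an $x_2$-free monomial. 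As witness I take $\xi := f^{k(q+m)+s}$ when $j=2s$ and $\xi := f^{k(q+m)+s}g_2$ when $j=2s+1$; in both cases $\xi \in \p^{(k(2q+2m)+j)}$ because $f^{k(q+m)+s} \in (\p^{(2)})^{k(q+m)+s} = \p^{(k(2q+2m)+2s)}$ and $g_2 \in \p$. Modulo $x_3$ each $\xi$ is an $x_2$-free monomial whose $x_1$-exponent is $(2q+1)(k(q+m)+s)$ (respectively that plus $q+1$), and a direct computation relying on the identity $(q+1)(2q+2m-1) = (2q+1)(q+m) + (m-1)$ shows that in both parities this exponent falls short of the threshold $(q+1)(k(2q+2m-1)+j)$ by exactly $s + k(m-1) > 0$. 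Hence $\bar\xi \notin J^{k(2q+2m-1)+j}$, so $\xi \notin \p^{k(2q+2m-1)+j}$.

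The main obstacle is the arithmetic of the non-containment: one must check that the witness's $x_1$-exponent stays strictly below the threshold for every admissible $k\ge 0$ and $j$, and obtaining the single uniform defect $s + k(m-1)$ across both parities is where the displayed identity does the real work. By contrast the containment is comparatively routine once $\p^{(2)} = \p^2+(f)$ and the bound $s \le q+m-1$ are in hand, the only delicate point being that all instances $f^l \in \p^{2l-1}$ remain within the range $l \le q+m+1$ allowed by Lemma~\ref{lemma on f_2}(\ref{lemma on f_2-3}).
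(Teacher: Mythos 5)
Your proof is correct and follows essentially the same route as the paper's: the containment is obtained by expanding $(\p^{(2)})^{N}=(\p^2+(f))^{N}$ and applying $f^{l}\in\p^{2l-1}$ termwise, and the non-containment uses the same witnesses $f^{k(q+m)+s}$ (resp.\ $f^{k(q+m)+s}g_2$) reduced modulo $x_3$ with a comparison of $x_1$-exponents against the generator $(x_1^{q+1}x_4^m)^{N}$. Your exponent gap $s+k(m-1)$ is in fact the correct value (the paper's displayed intermediate expression there contains a small arithmetic slip, though its sign and conclusion agree), so no changes are needed.
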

\begin{proof}
Let  $k=0$. If $j = 2j^{\prime}$  and  $j^{\prime} = 1  \ldots q+m-1$, then  by Lemma~\ref{lemma on f_2}(\ref{lemma on f_2-3}) and   Theorem~\ref{symbolic power} we get
\beq
\label{containment k=0 even}
    \p^{(2j^{\prime})} 
= (\p^2 + f)^{ j^{\prime}}
= \sum_{i=0}^{j^{\prime}} f^i \p^{ 2(  j^{\prime} -i)}
\subseteq \p^{2i-1 +  2j^{\prime} -2i}
=  \p^{ 2j^{\prime} -1}
= \p^{j-1}.
 \eeq
If $j = 2j^{\prime} +1$ and     $j^{\prime} = 1, \ldots,  q+m-1$, then from Theorem~\ref{symbolic power}(\ref{symbolic power two}) and (\ref{containment k=0 even}) we get
 \beq
 \label{containment k=0 odd}
\p^{(  2j^{\prime} + 1)}
= \p^{(  2j^{\prime})}\p
 \subseteq \p^{ 2j^{\prime} -1} \p
 = \p^{ 2j^{\prime} } 
 = \p^{j-1}.
 \eeq
Hence the lemma is true for $k=0$.
 
 Now let $k \geq 1$. Then by Theorem~\ref{symbolic power}(\ref{symbolic power two}), induction hypothesis and (\ref{containment k=0 even}) we get
 \beqn
 \p^{  ( k (2q + 2m) +  j)   }  
 &=&  (\p^{  (2q + 2m)  })^k  \p^{(  j) } \subseteq  \p^{ k(2q + 2m -1) } \p^{ j-1}
 =  \p^{ k (2q + 2m -1 ) +  j-1}. 
 \eeqn
 
 We now show that  $\p^{( k(2q+2m)     + j)}   \not \subseteq \p^{k (    2q+2m-1) + j} $.
  Let $j = 2 j^{\prime}$ where $j^{\prime} = 1, \ldots, q+m-1$.  By  
   Lemma~\ref{lemma on f_2}(\ref{lemma on f_2-2}) and Theorem~\ref{symbolic power}(\ref{symbolic power two}) we get  
   \beqn
   f^{ k(q+m)     + j^{\prime}} 
   \in (\p^{(2)})^{( k(q+m)     + j^{\prime})} = \p^{( k(2q+2m)     + 2j^{\prime})}
\eeqn
and
   \beq
   \label{x_1 in f_2}
            f^{k(q+m) +  j^{\prime}} &=& (x_1^{2q+1} x_4^{2m})^{k(q+m) + j^{\prime}} \mod(x_3).
            \eeq
            Since  $\p = (x_1^{q}x_2 x_4^{m}, x_1^{q+1} x_4^m, x_2^2) \mod(x_3)$, 
        \beq
        \label{x_1 in p}
        (x_1^{q+1} x_4^m)^{ k(    2q+2m-1) +  2j^{\prime} } \in \p^{ k(    2q+2m-1) +  2 j^{\prime} }  \mod (x_3)
        \eeq 
         and is a minimal generator. Comparing the power of $x_1$ in (\ref{x_1 in f_2}) and (\ref{x_1 in p}) we get
          $ f^{k(q+m) + j^{\prime}} \not \in \p^{ k(    2q+2m-1) +  2j^{\prime} }  \mod (x_3)$  since
        \beqn
        ( 2q+1) (k(q+m) +  j^{\prime}) 
       -    (q+1) ( k(    2q+2m-1) +  2j^{\prime}) 
       =  -k (q+m) + (q+1)  -1 -j^{\prime}
        < 0
               \eeqn
    Hence $f^{k(q+m) +  j^{\prime}} \not \in \p^{ k(    2q+2m-1) +  2j^{\prime} }$.  

Using the above argument we can show that if $k >1$ and $j=2j^{\prime} + 1$ where $j^{\prime} = 1, \ldots q+m-1$, then  

$ f^{k(q+m) +  j^{\prime}}  g_2 \in \p^{( k(2q+2m)     + j)}    \setminus \p^{k (    2q+2m-1) + j} $. 
 \end{proof}

As a consequence of Lemma~\ref{lemma containment-1} and Lemma~\ref{lemma containment-2}  we verify Conjecture~8.4.3
on \cite{BDHKKASS} for $\p$.  

\begin{lemma}
\label{corollary containment}
Let $r \geq 1$. Then for all $n \geq 2r-1$, $\p^{(n)} \subseteq \p^r$.
\end{lemma}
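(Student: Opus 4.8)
The plan is to deduce the whole family of containments from a single sharp one for each $r$. Since the symbolic powers form a descending chain, $\p^{(n)} \subseteq \p^{(2r-1)}$ for every $n \geq 2r-1$, so it suffices to prove the one containment $\p^{(2r-1)} \subseteq \p^r$ for each fixed $r \geq 1$; the case $r = 1$ is immediate because $\p^{(1)} = \p$.

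Fixing $r \geq 2$, I would set $N := 2q+2m$ and write $2r-1 = kN + j$ with $0 \leq j \leq N-1$ and $k \geq 0$. The key structural observation is a parity one: $N$ is even while $2r-1$ is odd, so $j$ is forced to be odd, i.e.\ $j = 1$ or $j \in \{3,5,\ldots,N-1\}$, and correspondingly $r = (kN+j+1)/2$ is a genuine integer. This parity analysis is exactly what places the exponent $2r-1$ inside the ranges handled by the two preceding lemmas. If $j = 1$ (which forces $k \geq 1$ since $r \geq 2$), Lemma~\ref{lemma containment-1} yields $\p^{(2r-1)} \subseteq \p^{k(N-1)+1}$; if $j \geq 3$, Lemma~\ref{lemma containment-2} yields $\p^{(2r-1)} \subseteq \p^{k(N-1)+j-1}$.

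It then remains to check that the exponent $R$ so produced satisfies $R \geq r$, for then $\p^R \subseteq \p^r$ and the chain $\p^{(2r-1)} \subseteq \p^R \subseteq \p^r$ closes the argument. Substituting $r = (kN+j+1)/2$ reduces this to elementary inequalities: for $j=1$ one needs $k(N/2 - 1) \geq 0$, and for $j \geq 3$ one needs $k(N-2) + (j-3) \geq 0$; both hold since $N = 2q+2m \geq 4$ and $j \geq 3$ in the respective cases (the sub-case $k=0$, $j \geq 3$ reducing to $j-1 \geq (j+1)/2$, i.e.\ $j \geq 3$). The argument is therefore essentially bookkeeping; the only steps requiring care are the parity argument fixing the admissible values of $j$ and keeping the boundary cases $r=1$ and $k=0$ straight, rather than any substantial new idea. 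It is also worth noting that since Conjecture~8.4.3 of \cite{BDHKKASS} requires only $n \geq 3r-2$ while we obtain the stronger bound $2r-1 \leq 3r-2$, this immediately verifies that conjecture for $\p$.
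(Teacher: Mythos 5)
Your proof is correct, but it takes a genuinely different route from the paper's. The paper's proof is a two-line argument: after the same reduction to $n = 2r-1$, it invokes Theorem~\ref{symbolic power}(\ref{symbolic power two}) to write $\p^{(2r-1)} = \p\,(\p^{(2)})^{r-1}$ and then uses $\p^{(2)} \subseteq \p$ to conclude $\p\,(\p^{(2)})^{r-1} \subseteq \p^r$. You instead deduce the statement from the sharp containments of Lemma~\ref{lemma containment-1} and Lemma~\ref{lemma containment-2} via the decomposition $2r-1 = k(2q+2m)+j$, a parity argument forcing $j$ odd, and the elementary inequalities $k(N/2-1)\geq 0$ and $k(N-2)+(j-3)\geq 0$; I checked these and the boundary cases ($r=1$, and $k=0$ with $j\geq 3$), and they all go through since $N=2q+2m\geq 4$. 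What each approach buys: the paper's is shorter and needs only the structure theorem for symbolic powers, whereas yours shows the lemma is a formal consequence of the exact values $\rho_n(\p)$ computed for the resurgence --- which is in fact how the paper's own preamble frames the result (``As a consequence of Lemma~\ref{lemma containment-1} and Lemma~\ref{lemma containment-2} we verify Conjecture~8.4.3\dots''), so your route arguably matches the authors' stated intent better than their written proof does, at the cost of more bookkeeping. Your closing remark comparing $2r-1$ with the bound $3r-2$ from Conjecture~8.4.3 of \cite{BDHKKASS} is also correct.
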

\begin{proof}
If $n  \geq 2r-1$,   then $\p^{(n)} \subseteq  \p^{(2r-1)}$. Hence it is enough to  show that $\p^{(2r-1)} \subseteq \p^r$. By Theorem \ref{symbolic power} we have $\p^{(2r-1)}=\p(\p^{(2)})^{r-1}\subseteq \p^r$.
  \end{proof}


We now verify that  Conjecture~2.1 in \cite{harb-huneke} holds true in our case.

\begin{corollary}
\label{corollary containment 1}
  $\p^{(3n)} \subseteq \m^{2n} \p^n$. 
\end{corollary}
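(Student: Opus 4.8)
The plan is to prove the containment $\p^{(3n)} \subseteq \m^{2n}\p^n$ by exploiting the very clean structural description of the symbolic powers obtained in Theorem~\ref{symbolic power}(\ref{symbolic power two}), namely that the even symbolic powers are ordinary powers of $\p^{(2)}$. Since $3n$ is either even or odd depending on the parity of $n$, I would first split into the two cases $n$ even and $n$ odd, or more efficiently, write $\p^{(3n)}$ directly in terms of $f$ and $\p$ using the decomposition ${\displaystyle \p^{(n)} = \sum_{a_1+2a_2 = n} \p^{a_1}(\p^{(2)})^{a_2}}$ established in the proof of Theorem~\ref{symbolic power}(\ref{symbolic power two}), together with $\p^{(2)} = (\p^2, f)$ coming from Lemma~\ref{lemma on f_2}(\ref{lemma on f_2-2}). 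This reduces the problem to showing that each generator $f^{a_2}\p^{a_1}$ with $a_1 + 2a_2 = 3n$ lands inside $\m^{2n}\p^n$.

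The key step is a degree/power bookkeeping in the variable $f$. From Lemma~\ref{lemma on f_2}(\ref{lemma on f_2-3}), $f^k \in \p^{2k-1}$ for $k$ in the admissible range, so a term $f^{a_2}\p^{a_1}$ sits in $\p^{2a_2 - 1 + a_1} = \p^{3n - 1}$ (using $a_1 + 2a_2 = 3n$). I then want to peel off $\p^n$ and exhibit the remaining factor $\p^{2n-1}$ inside $\m^{2n}$. The natural route is to show $\p \subseteq \m^2$ — indeed every generator $g_1, g_2, g_3$ from \eqref{defining equations} has order at least $2$ at the origin (each monomial appearing has total degree $\geq 2$), so $\p \subseteq \m^2$, whence $\p^{2n-1} \subseteq \m^{2(2n-1)} = \m^{4n-2} \subseteq \m^{2n}$ for $n \geq 1$. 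Combining, each term lies in $\p^n \cdot \p^{2n-1} \subseteq \p^n \m^{2n}$, which is exactly the desired containment.

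The main obstacle I anticipate is the range restriction in Lemma~\ref{lemma on f_2}(\ref{lemma on f_2-3}): that result only guarantees $f^k \in \p^{2k-1}$ for $k \leq q+m+1$, whereas in $f^{a_2}\p^{a_1}$ the exponent $a_2$ can be as large as $\lfloor 3n/2\rfloor$, far exceeding $q+m+1$ once $n$ is large. To circumvent this I would not use Lemma~\ref{lemma on f_2}(\ref{lemma on f_2-3}) in its literal form, but instead work through Theorem~\ref{symbolic power}(\ref{symbolic power two}) directly: since $f \in \p^{(2)}$ unconditionally, $f^{a_2} \in (\p^{(2)})^{a_2}$, and then $\p^{a_1}(\p^{(2)})^{a_2} \subseteq \p^{a_1}\cdot(\p^{(2)})^{a_2}$ with the even/odd formula giving $\p^{(3n)}$ as $\p^{\epsilon}(\p^{(2)})^{\lfloor 3n/2\rfloor}$ for the appropriate $\epsilon \in \{0,1\}$. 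The cleaner finish is therefore: handle $n$ even as $\p^{(3n)} = (\p^{(2)})^{3n/2}$ and $n$ odd as $\p^{(3n)} = \p(\p^{(2)})^{(3n-1)/2}$, then use $\p \subseteq \m^2$ and the containment $\p^{(2)} \subseteq \m^2\,\p$ (which follows because $f \in \m^4 \subseteq \m^2\p$ and $\p^2 \subseteq \m^2\p$) to extract the factors $\m^{2n}$ and $\p^n$ by a careful count. Verifying $\p^{(2)} \subseteq \m^2\p$ via the explicit generator $f$ is the one genuinely computational point, and it is where the explicit description of $f$ in \eqref{definition of f2} is indispensable.
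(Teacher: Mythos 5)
Your final route is correct, and it is genuinely a different (in fact slightly stronger) argument than the paper's proof of this corollary. The paper handles $n=2r$ by splitting $(\p^{(2)})^{3r}=(\p^{(2)})^{r}(\p^{(2)})^{2r}$ and bounding the two factors separately, using $\p^{(2)}\subseteq\m^4$ for the first (giving $\m^{4r}=\m^{2n}$) and $\p^{(2)}\subseteq\p$ for the second (giving $\p^{2r}=\p^n$); the odd case is then reduced to the even one via $\p^{(3n)}=\p^{(6(r-1))}\p^{(3)}$. Your single containment $\p^{(2)}\subseteq\m^2\p$ does both jobs at once and yields $\p^{(3n)}\subseteq\m^{3n}\p^{\lceil 3n/2\rceil}$, which is more than is asked; it is essentially the mechanism behind the paper's subsequent Corollary~\ref{corollary containment 2}, which is proved there from the analogous containment $\p^{(2)}\subseteq\m\p$. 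You were also right to abandon your first attempt through Lemma~\ref{lemma on f_2}(\ref{lemma on f_2-3}): its restriction $n\le q+m+1$ makes it unusable for large exponents of $f$, and the detour through $\p^{(2)}$ is the correct fix.

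One intermediate justification you give is false, although the claim it supports is true. You write that $f\in\m^2\p$ ``because $f\in\m^4\subseteq\m^2\p$''; but $\m^4\not\subseteq\m^2\p$, since for example $x_1^4\in\m^4$ while $x_1^4\notin\p\supseteq\m^2\p$ ($\p$ is prime and $x_1\notin\p$). The correct verification is the computation you defer to at the end: the defining expression \eqref{definition of f2} exhibits $f=x_3^{q+m}g_1-x_1^{q}x_4^{m}g_2+x_1^{q-1}x_3^{q+m-1}x_2x_4^{m}g_3$, and since $q,m\ge 1$ each of the three coefficients lies in $\m^{q+m}\subseteq\m^2$, so $f\in\m^2\p$ directly. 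With that repair, $\p^{(2)}=\p^2+(f)\subseteq\m^2\p$ holds and your even/odd bookkeeping goes through.
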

\begin{proof} Let $r \geq 1$. If $n=2r$, then   by repeatedly applying Theorem~\ref{symbolic power} we get $\p^{  (3(2r))} = (\p^{(2)})^{3r}  = \p^{(2r)} \p^{(4r)} \subseteq \m^{4r} \p^{2r} = \m^{2n} \p^n$ as $\p^{(2)} \subseteq \m^4$ and $\p^{(4r)} = (\p^{(2)})^{2r} \subseteq \p^{2r}$.  

If $n=2r-1$, then 
$
\p^{(3n)} = \p^{(6(r-1))}   \p^{(3)}$. Using the even case  argument, $\p^{(6(r-1))} \subseteq \m^{2 (2r-2)} \p^{2r-2}$. Hence 
$
\p^{(3n)} = \p^{(6(r-1))}   \p^{(3)} \subseteq \m^{2 (2r-2)} \p^{2r-2} \p^{(2)} \p \subseteq \m^{2 (2r-1)} \p^{2r-1}=  \m^{2n} \p^n$.
\end{proof}

We have an improved version  of Corollary~\ref{corollary containment 1}.

\begin{corollary}
\label{corollary containment 2}
For all $n \geq 1$,
  $\p^{(2n)} \subseteq \m^{n} \p^n$ and $\p^{(2n+1)} \subseteq \m^{n} \p^n$ . 
\end{corollary}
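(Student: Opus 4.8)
The plan is to reduce both containments to the single key fact that $\p^{(2)} \subseteq \m\p$, after which Theorem~\ref{symbolic power} supplies all the remaining bookkeeping. First I would record that, applying Theorem~\ref{symbolic power}(\ref{symbolic power one}) with $n=2$, one has $\p^{(2)} = \p^2 + (f)$, since the only solutions of $n_1 + 2n_2 = 2$ are $(n_1,n_2) = (2,0)$ and $(0,1)$. So the whole estimate hinges on understanding the two generating pieces $\p^2$ and $f$ of $\p^{(2)}$.

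Next I would establish the claim $\p^{(2)} \subseteq \m\p$ by treating its two pieces separately. The inclusion $\p^2 \subseteq \m\p$ is immediate: since $\p \subseteq \m$, every product $g_i g_j$ of generators lies in $\m\p$ (place one factor in $\m$ and the other in $\p$). For the generator $f$ I would invoke the defining identity (\ref{definition of f2}),
\[
f = x_3^{q+m} g_1 - x_1^{q} x_4^m g_2 + x_1^{q-1} x_3^{q+m-1} x_2 x_4^m g_3,
\]
and observe that each of the three coefficients $x_3^{q+m}$, $x_1^{q} x_4^m$ and $x_1^{q-1} x_3^{q+m-1} x_2 x_4^m$ has strictly positive degree, hence lies in $\m$. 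Thus $f$ is an $\m$-linear combination of $g_1, g_2, g_3 \in \p$, giving $f \in \m\p$ and therefore $\p^{(2)} \subseteq \m\p$.

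Finally I would feed this into the structural description of the even and odd symbolic powers. For the even case, Theorem~\ref{symbolic power}(\ref{symbolic power two}) gives $\p^{(2n)} = (\p^{(2)})^n$, so $\p^{(2n)} \subseteq (\m\p)^n = \m^n\p^n$. For the odd case, the same theorem gives $\p^{(2n+1)} = \p(\p^{(2)})^n \subseteq \p\,\m^n\p^n = \m^n\p^{n+1} \subseteq \m^n\p^n$, which is in fact slightly stronger than required and recovers the claimed improvement over Corollary~\ref{corollary containment 1}.

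There is essentially no deep obstacle here; the entire content is the elementary but crucial observation that the explicit generator $f$ of $\p^{(2)}$ produced in (\ref{definition of f2}) has all of its coefficients in $\m$, so that $\p^{(2)}$ already sits inside $\m\p$ rather than merely inside $\p$. The only point demanding a moment of care is checking that none of those three coefficients is a unit, i.e.\ that each has positive degree; this follows from the standing hypotheses $q \geq 1$ and $m \geq 1$, and it is precisely what propagates through Theorem~\ref{symbolic power} to yield both inclusions.
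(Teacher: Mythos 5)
Your proof is correct and follows essentially the same route as the paper: both reduce the statement to $\p^{(2)}=\p^2+(f)\subseteq\m\p$ via the expression of $f$ in (\ref{definition of f2}) with coefficients of positive degree, and then apply Theorem~\ref{symbolic power}(\ref{symbolic power two}) to the even and odd cases. Your write-up merely spells out the details that the paper leaves implicit.
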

\begin{proof} Let $r \geq 1$. Then   by applying Theorem~\ref{symbolic power}(\ref{symbolic power two}) and  (\ref{definition of f2})  we get 
\begin{align*}
                   \p^{  (2n)}
&  =            (\p^{(2)})^{n}  = (\p^2 + (f))^n
 \subseteq  (\m \p)^n = \m^{n} \p^{n} 
 &  \\
                     \p^{(2n + 1) }  
& =               \p \p^{(2n)} 
\subseteq     \m^n \p^{n+1} 
\subseteq \m^n \p^n. & 
\end{align*}
\end{proof}

As a consequence of Lemma~\ref{lemma containment-1} and Lemma~\ref{lemma containment-2} we give the exact value for the resurgence $\rho(\p)$.

\begin{theorem}
\label{theorem-resurgence} For all $q,m \geq 1$, 
 ${\displaystyle \rho(\p)
 = \f{e(R/ \p) -1}{e(R/ \p) -2}.}$
\end{theorem}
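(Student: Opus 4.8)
The plan is to compute $\rho(\p)$ directly from its characterization via the quantities $\rho_n(\p) = \min\{r : \p^{(n)} \not\subseteq \p^r\}$, using the sharp containment/non-containment results already established in Lemma~\ref{lemma containment-1} and Lemma~\ref{lemma containment-2}. Recall that $e(R/\p) = d + 2m = 2(q+m)+1$ by Lemma~\ref{cohen-macaulay}(\ref{corollary multiplicity}), so the target value is $\f{2(q+m)}{2(q+m)-1} = \f{2q+2m}{2q+2m-1}$. The strategy is to read off, for every $n$, exactly which power $r$ is the largest one with $\p^{(n)} \subseteq \p^r$, hence determine $\rho_n(\p)$, and then take the supremum of $n/\rho_n(\p)$.

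First I would convert the two lemmas into a formula for $\rho_n(\p)$. Writing $n = k(2q+2m) + j$, Lemma~\ref{lemma containment-1} says that for $j \in \{0,1\}$ we have $\p^{(n)} \subseteq \p^{k(2q+2m-1)+j}$ but $\p^{(n)} \not\subseteq \p^{k(2q+2m-1)+j+1}$, so $\rho_n(\p) = k(2q+2m-1)+j+1$. Lemma~\ref{lemma containment-2} says that for $j \in \{2,\ldots,2q+2m-1\}$ we have $\p^{(n)} \subseteq \p^{k(2q+2m-1)+j-1}$ but not $\p^{(n)} \subseteq \p^{k(2q+2m-1)+j}$, so $\rho_n(\p) = k(2q+2m-1)+j$. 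Thus every ratio $n/\rho_n(\p)$ falls into one of these two explicit forms, and I would compute each.

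Next I would evaluate the supremum. For the ranges covered by the two lemmas one checks that the largest ratios occur at the boundary cases. The cleanest candidate is $j=0$, $n = k(2q+2m)$, giving
\[
\f{n}{\rho_n(\p)} = \f{k(2q+2m)}{k(2q+2m-1)+1}.
\]
As $k \to \infty$ this increases to the limit $\f{2q+2m}{2q+2m-1}$, which equals $\f{e(R/\p)-1}{e(R/\p)-2}$. One then verifies that no ratio in either family exceeds this limiting value: for $j \geq 2$ the extra $+j$ in the denominator (relative to the $+1$ in the $j=0$ case) only depresses the ratio, and the finite cases $k$ small are dominated by the limit. I would assemble these comparisons into the inequality $\rho(\p) \leq \f{2q+2m}{2q+2m-1}$ together with the matching lower bound coming from the sequence $n=k(2q+2m)$.

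\emph{The main obstacle} I anticipate is the supremum step rather than the containment step: one must rule out that some intermediate $j$ produces a ratio larger than the $j=0$ limit, and must confirm the supremum is attained in the limit (not a genuine maximum). This amounts to verifying the monotonicity inequality
\[
\f{k(2q+2m)+j}{k(2q+2m-1)+(j \text{ or } j+1)} \;\leq\; \f{2q+2m}{2q+2m-1}
\]
for all admissible $k \geq 0$ and $j$, which reduces after cross-multiplication to a linear inequality in $k$ and $j$ that holds because $2q+2m-1 \geq 1$. Once this routine cross-multiplication is checked, combining it with the lower bound from the $j=0$ sequence yields equality and completes the proof.
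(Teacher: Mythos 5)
Your proposal is correct and follows essentially the same route as the paper: both reduce $\rho(\p)$ to $\sup_n n/\rho_n(\p)$, read off $\rho_n(\p)$ from Lemma~\ref{lemma containment-1} and Lemma~\ref{lemma containment-2} according to the residue $j$ of $n$ modulo $2q+2m$, and identify the supremum as the limit $\f{2q+2m}{2q+2m-1}$ of the $j=0$ sequence. Your explicit cross-multiplication check that no intermediate $j$ exceeds this limit is the same computation the paper performs implicitly when evaluating each $\sup_k$.
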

\begin{proof}
Let $j=0,1$, $k \geq 1$ and $n_{k,j} = k(2q+2m) +  j$.  Then  by Lemma~\ref{lemma containment-1}, for all $k \geq 1$, $\rho_{k(2q+2m) +  j} (\p) = k(2q+2m-1) +  j +1$. Hence
\beq
\label{eq res 1}
\sup_k\left\{ \f{n_{k,j} } {\rho_{n_{k,j}}(\p)} \right\} =  \sup\left\{\frac{k(2q+2m) +  j}{ k(2q+2m-1) +  j +1}\right\}
=\frac{2q + 2m}{2q + 2m-1}.
\eeq
Let $j=2, \ldots, 2q + 2m-1$,  $k \geq 0$ and $n_{k,j} = k(2q+2m) +  j$. 
Then by Lemma~\ref{lemma containment-2}, for all $k \geq 0$, $\rho_{k(2q+2m) +  j}(\p) = k(2q+2m-1) +  j$. Hence
\beq
\label{eq res 2}
 \sup_k \left\{ \f{n_{k,j} } {\rho_{n_{k,j}}(\p)} \right\}
=  \sup_k\left\{\frac{k(2q+2m) +  j}{ k(2q+2m-1) +  j}\right\}
 =\frac{2q + 2m}{2q + 2m-1} .
\eeq
From (\ref{eq res 1}) and (\ref{eq res 2}) we get
\beqn 
\rho(\p)
=\sup_{k} \left\{ \f{n_{k,j} } {\rho_{n_{k,j}}(\p)}: j=0, \ldots, q+m-1 \right\}
= \frac{2q+2m}{2q+2m-1}.
\eeqn
As $e(R/ \p) = 2(q+m)+1$, the result follows.
\end{proof}

\subsection{Waldschmidt Constant} \hfill\\
For a homogeneous ideal $I\subset R$, let $\alpha(I)$ denote the least generating degree of $I$.  The Waldschmidt constant of $I$ is defined as 
$$
                                           {\gamma}(I)
=\limns  \frac{\alpha(I^{(s)})}{s}.
$$
Here we will compute the Waldschmidt constant for $\p$.

\begin{theorem}
\label{theorem-waldschmidt}
${\gamma}(\p)= \alpha(\p) =2$.
\end{theorem}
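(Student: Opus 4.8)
The plan is to compute $\gamma(\p)$ directly from its definition $\gamma(\p) = \lim_{s \to \infty} \alpha(\p^{(s)})/s$ by pinning down the least generating degree $\alpha(\p^{(s)})$ for every $s$, using the explicit description of the symbolic powers already established in Theorem~\ref{symbolic power}. The key input is that the structure of $\p^{(s)}$ is fully determined by the generators $g_1, g_2, g_3$ of $\p$ and the single element $f$ from \eqref{definition of f2}: we have $\p^{(2n)} = (\p^{(2)})^n$ and $\p^{(2n+1)} = \p(\p^{(2)})^n$, where $\p^{(2)} = \p^2 + (f)$. So the least-degree generators of each $\p^{(s)}$ are built from products of the $g_i$ (each of degree $2$ or $q+m+1$) and copies of $f$ (of degree $2(q+m)+1$).

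First I would record the degrees: $\deg g_3 = 2$, $\deg g_1 = \deg g_2 = q+m+1$, and $\deg f = 2(q+m)+1$, so in particular $\alpha(\p) = 2$ (witnessed by $g_3 = x_1x_3 - x_2^2$). Next I would bound $\alpha(\p^{(s)})$ from above by exhibiting, for each $s$, a low-degree element. The natural candidate is a suitable power of $g_3$: since $g_3 \in \p$ has degree $2$, the product $g_3^{\,s}$ lies in $\p^s \subseteq \p^{(s)}$ and has degree $2s$, giving $\alpha(\p^{(s)}) \le 2s$ immediately. For the lower bound I would argue that no generator of $\p^{(s)}$ can have degree below $2s$; this follows because every generator of $\p^{(s)}$, being a product of the $g_i$'s and $f$'s with total ``symbolic weight'' $s$, has degree at least $2$ per unit weight — concretely, $f$ contributes weight $2$ and degree $2(q+m)+1 \ge 2 \cdot 2$, while each $g_i$ contributes weight $1$ and degree $\ge 2$, so the degree always dominates twice the weight. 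Combining these, $\alpha(\p^{(s)}) = 2s$ for all $s \ge 1$.

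With $\alpha(\p^{(s)}) = 2s$ established, the Waldschmidt constant follows at once:
\[
\gamma(\p) = \lim_{s \to \infty} \frac{\alpha(\p^{(s)})}{s} = \lim_{s \to \infty} \frac{2s}{s} = 2 = \alpha(\p).
\]
I expect the main obstacle to be the lower bound $\alpha(\p^{(s)}) \ge 2s$: one must verify carefully that no clever combination of the generators and of $f$ produces an element of $\p^{(s)}$ of degree strictly less than $2s$. The cleanest route is probably to exploit the weight/degree inequality uniformly — observing that $\deg f = 2(q+m)+1 > 2\cdot 2$ precisely because $q+m \ge 2$ means using $f$ in place of two factors of $g_3$ can only \emph{increase} the degree — together with the grading of the symbolic Rees algebra, so that the minimal generating degree in symbolic weight $s$ is realized purely by $g_3^{\,s}$. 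The Gorenstein/Cohen-Macaulay structure of $\R_s(\p)$ from Theorem~\ref{cm-rees} could alternatively be invoked to control generating degrees, but the elementary degree count should suffice.
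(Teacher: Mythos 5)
Your proposal is correct and follows essentially the same route as the paper: both use Theorem~\ref{symbolic power} to reduce to the decomposition $\p^{(s)}=\sum_{n_1+2n_2=s}f^{n_2}\p^{n_1}$ and then observe that $\deg f=2(q+m)+1\geq 4=2\cdot\alpha(\p)$, so replacing two factors of $g_3$ by $f$ never lowers the degree and $\alpha(\p^{(s)})=2s$. The paper phrases this as $\alpha(\p^{(2n)})=4n$ and $\alpha(\p^{(2n+1)})=4n+2$ rather than via your uniform weight/degree inequality, but the content is identical.
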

\begin{proof}
As $\deg(g_1) = \deg(g_2) \geq \deg(g_3) = 2$, $\alpha(\p)=2$. By Theorem~\ref{symbolic power}(\ref{symbolic power two}), $\p^{(2n)}=(\p^{(2)})^n$ and $\p^{(2n+1)}=\p(\p^{(2)})^n$. Since $\p^{(2)}=(\p^2 +f)$ and $\deg (f)=2(q+m)+1\geq \alpha(\p^2)= 2  \cdot\alpha(\p) = 4$. Thus  ${\displaystyle \f{\alpha(\p^{(2n)}) }{2n}= \f{4n}{2n}=2}$ and 
${\displaystyle \f{\p^{(2n+1)}}{2n+1}=\f{4n+2}{2n+1}=2}$. 
Hence $ \gamma(\p)=2$.
\end{proof}

\subsection{Regularity}\hfill\\
In this subsection we compute the regularity of the symbolic  powers of $\p$. Let $\p T$, $fT$ and $I_n$ as in (\ref{definition of Ji}). 
 We first prove a preliminary result  (Lemma~\ref{reg comparision}) which indicates that it is enough to compute the regularity of 
 $T/I_n$. 
 
 \begin{lemma}
 \label{reg comparision}
 $\reg( R/ \p^{(n)})  = \reg(T/ I_n)$. 
  \end{lemma}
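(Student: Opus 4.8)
The plan is to use the standard fact that Castelnuovo--Mumford regularity is unchanged upon passing to the quotient by a linear nonzerodivisor: if $\ell$ is a degree-one $M$-regular element on a finitely generated graded module $M$, then the short exact sequence $0 \to M(-1) \xrightarrow{\ell} M \to M/\ell M \to 0$, together with the induced long exact sequence in local cohomology $H^i_{\m}(-)$, gives $\reg(M/\ell M) = \reg(M)$. I would apply this twice, to the linear regular sequence $x_1, x_4$.

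By Corollary~\ref{cor-cohen-macaulay}, $R/\p^{(n)}$ is Cohen--Macaulay of dimension $2$, and by Lemma~\ref{cohen-macaulay} together with Proposition~\ref{main theorem 1} (which shows $\p^{(n)} + (x_1,x_4)$ is $\m$-primary) the forms $x_1, x_4$ constitute a homogeneous system of parameters, hence a regular sequence, on $R/\p^{(n)}$. Applying the reduction fact first to $x_1$ (a nonzerodivisor on $R/\p^{(n)}$) and then to the image of $x_4$ (a nonzerodivisor on $R/(\p^{(n)}+(x_1))$), I obtain
\[
\reg\left(\frac{R}{\p^{(n)}}\right) = \reg\left(\frac{R}{\p^{(n)} + (x_1,x_4)}\right).
\]

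Next I would identify the right-hand quotient with $T/I_n$. By Theorem~\ref{symbolic power}, $\p^{(n)} = \I_n = \sum_{n_1+2n_2 = n} f^{n_2}\p^{n_1}$. Reducing modulo $(x_1,x_4)$ and using $T = R/(x_1,x_4) \cong \kk[x_2,x_3]$, the definition \eqref{definition of f2} of $f$ shows that $f$ maps to $x_3^{2(q+m)+1}$ and $\p$ maps to $\p T = (x_2^2, x_2 x_3^{q+m}, x_3^{q+m+1})$, so $\I_n$ maps precisely onto the ideal $I_n$ of \eqref{definition of Ji}. Hence $R/(\p^{(n)}+(x_1,x_4)) \cong T/I_n$ as graded rings.

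Finally, since the Castelnuovo--Mumford regularity of a finitely generated graded module is intrinsic and does not depend on the ambient polynomial ring in which the module is presented, $\reg_R(T/I_n) = \reg_T(T/I_n)$. Combining the three identifications yields $\reg(R/\p^{(n)}) = \reg(T/I_n)$. The only delicate points are the verification that $x_1, x_4$ is genuinely a regular sequence on $R/\p^{(n)}$ (which rests on the Cohen--Macaulayness of Corollary~\ref{cor-cohen-macaulay}) and the invariance of regularity under change of the ambient ring; both are standard, so I expect the content to be bookkeeping rather than a genuine obstacle.
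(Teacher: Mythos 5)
Your proposal is correct and follows essentially the same route as the paper: the paper also reduces modulo the linear regular sequence $x_1,x_4$ (citing \cite[Remark~4.1]{chardin} for the invariance of regularity under a linear nonzerodivisor) and then identifies $R/(\p^{(n)}+(x_1,x_4))$ with $T/I_n$. Your write-up is in fact slightly more careful, since you justify that $x_1,x_4$ is regular on $R/\p^{(n)}$ itself via Corollary~\ref{cor-cohen-macaulay}, whereas the paper's proof only mentions regularity on $R/\p$.
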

  \begin{proof}
  As $x_1, x_4$ is a regular sequence in $R/\p$, by \mbox{\cite[Remark~4.1]{chardin}}, 
  $$
  \reg \left(  \f{R}{\p^{(n)}} \right)
  =   \reg \left(  \f{R}{\p^{(n) }   + (x_1)}  \right)
  =   \reg \left(  \f{R}{\p^{(n) }   + (x_1, x_4)} \right)
  = \reg \left(  \f{T}{I_{n}  } \right).
  $$
  \end{proof}
Let $G(\F) := \displaystyle{\bigoplus_{n \geq 0} I_n / I_{n+1}}$ be  the associated graded ring  corresponding to the filtration  
${\F}:= \{ I_n\}_{n \geq 0}$.
We show $G(\F)$ is Cohen-Macaulay and this result  is very useful in computing the regularity. 
We first prove a preliminary lemma. 

\begin{lemma}
\label{colon of J_1}
For all $n \geq 1$,
\been
\item
\label{colon of J_1 one}
 $\p^{n} T :(x_2^2)\subseteq \p^{n-1} T$. \newline
 \item
 \label{colon of J_1 two}
 $ ( \p^{n}T : x_3^{2(q+m)+1}) \subseteq \p^{n-2}T$. 
 \eeen
 \end{lemma}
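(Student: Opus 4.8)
The plan is to reduce both statements to explicit monomial computations in $T \cong \kk[x_2,x_3]$. Write $a := q+m$, so that $\p T = (x_2^2,\, x_2 x_3^{a},\, x_3^{a+1})$ is a monomial ideal and, for every $k$, $\p^k T$ is generated by the monomials $x_2^{2b_1+b_2}\, x_3^{ab_2+(a+1)b_3}$ with $b_1+b_2+b_3=k$ and $b_i \geq 0$; a monomial $x_2^{i}x_3^{j}$ lies in $\p^k T$ exactly when it is divisible by one such generator. I would compute the two colon ideals by Proposition~\ref{prop-herzog} and then, generator by generator, exhibit a generator of the target ideal $\p^{n-1}T$ (resp.\ $\p^{n-2}T$) that divides it. The heart of each case is a \emph{repackaging} of the exponent vector $(b_1,b_2,b_3)$ into a shorter vector summing to $n-1$ or $n-2$.

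For (\ref{colon of J_1 one}), Proposition~\ref{prop-herzog} shows $\p^n T : x_2^2$ is generated by the monomials $x_2^{2b_1+b_2-\min(2b_1+b_2,2)}\,x_3^{ab_2+(a+1)b_3}$. When $b_1 \geq 1$ the quotient is literally the generator of $\p^{n-1}T$ attached to $(b_1-1,b_2,b_3)$, so the containment is immediate. The only real work is $b_1=0$: here I would trade two factors $x_2 x_3^{a}$ for one factor $x_3^{a+1}$, i.e.\ pass to the vector $(0,b_2-2,b_3+1)$ summing to $n-1$; this raises the $x_3$-degree by only $a-1 \geq 0$, so the new generator still divides the quotient. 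The leftover pure powers of $x_3$ (the subcases $b_2 \in \{0,1\}$) have $x_3$-degree at least $(a+1)(n-1)$ and hence already lie in $\p^{n-1}T$.

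For (\ref{colon of J_1 two}), Proposition~\ref{prop-herzog} shows $\p^n T : x_3^{2a+1}$ is generated by $x_2^{2b_1+b_2}\,x_3^{ab_2+(a+1)b_3-\min(ab_2+(a+1)b_3,\,2a+1)}$. I would split according to whether $ab_2+(a+1)b_3 \geq 2a+1$ or not. In the first regime one subtracts off two generators' worth of $x_3$-degree and repackages $(b_1,b_2,b_3)$ into a vector summing to $n-2$: the boundary subcases are $b_3 \geq 2$ (drop two $x_3^{a+1}$), $b_3=1$ (which forces $b_2 \geq 1$; drop one $x_2 x_3^a$ and the $x_3^{a+1}$), and $b_3=0$ (which forces $b_2 \geq 3$; trade three $x_2 x_3^a$ for one $x_2^2$), the required inequalities being trivial, an equality, or reducing to $a \geq 1$. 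In the second regime the quotient is a pure power $x_2^{2b_1+b_2}$, and only finitely many exponent vectors occur (one checks $b_3 \leq 1$ and $b_2 \leq 2$), for each of which the $x_2$-exponent is at least $2n-2 \geq 2(n-2)$, so the monomial lies in $\p^{n-2}T$; for $n\leq 2$ one reads $\p^{n-2}T=T$ and both inclusions are trivial.

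The bulk of the argument is the verification of elementary exponent inequalities against Proposition~\ref{prop-herzog}, which is routine. The only genuinely delicate point, and the main obstacle, is the repackaging of exponent vectors in the cases with no $x_2^2$-factor (part (\ref{colon of J_1 one}), $b_1=0$) and with the odd target degree $2a+1$ failing to match the parity of the generators (part (\ref{colon of J_1 two}), $b_3 \in \{0,1\}$). These are precisely the places where the containment could conceivably fail, and they go through exactly because the gap $a-1=(q+m)-1$ between the $x_3$-exponents of $x_3^{a+1}$ and $x_3^a$ is nonnegative, i.e.\ because $q+m \geq 1$ (in fact $\geq 2$).
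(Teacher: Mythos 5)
Your proposal is correct and takes essentially the same route as the paper: both reduce to the monomial ideal $\p T=(x_2^2,\,x_2x_3^{q+m},\,x_3^{q+m+1})$ in $T$, apply Proposition~\ref{prop-herzog}, and verify the containments by exponent arithmetic, the paper merely organizing the generators through the decomposition $\p^nT=\sum_{i} x_2^{2(n-i)}x_3^{(q+m)i}(x_2,x_3)^i$ instead of by individual exponent vectors $(b_1,b_2,b_3)$. One wording slip worth fixing: in the $b_1=0$ case of part (\ref{colon of J_1 one}), passing to $(0,b_2-2,b_3+1)$ \emph{lowers} the $x_3$-degree by $a-1\ge 0$ rather than raising it, and that is precisely why the new generator divides the quotient.
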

\begin{proof} (\ref{colon of J_1 one}) By Proposition~\ref{prop-herzog},
\beq  
\label{J_1 colon x_2^2}      \nno
                         (\p^{n} T:x_2^2)
&=& \sum_{i=0}^{n-1} \left(x_2^{2(n-i)} x_3^{(q+m)i} (x_2, x_3)^i  : x_2^2\right)
+                                  (x_3^{(q+m)n} (x_2, x_3)^{n} : x_2^2)  \\ \nno
&=& \sum_{i=0}^{n-1} \left(x_2^{2(n-i-1)} x_3^{(q+m)i} (x_2, x_3)^i \right)
+                                  (x_3^{(q+m)n} (x_2, x_3)^{n-2} )   \\
&\subseteq & \p^{n-1}T,\\
\label{colon of j1 with x3} \nno
 &&                  (\p^{n} T : x_3^{2(q+m)+1})  \\ \nno
&=&               \sum_{i=0}^{2} (  x_2^{2(n-i)}  x_3^{(q+m)i} (x_2, x_3)^{i} : x_3^{2(q+m)+1})
+                    \sum_{i=3}^{n} (  x_2^{2(n-i)}  x_3^{(q+m)i} (x_2, x_3)^{i} : x_3^{2(q+m)+1})  
  \\ \nno
&=&                   x_2^{2n}(x_2, x_3) 
+                    \sum_{i=3}^{n+2} (  x_2^{2(n+2-i)}  x_3^{(q+m)(i-3)} (x_2,  x_3)^{i-3})
                        (  x_2^3  x_3^{q+m-1},x_2^2   x_3^{q+m} ,  x_2x_3^{q+m+1},  x_3^{q+m+2}      ) \\ \nno
&\subseteq&   (x_2^{2n})
 +                 \sum_{i=3}^{n} (  x_2^{2(n+2-i)}  x_3^{(q+m)(i-3)} (x_2,  x_2)^{i-3})
                        ( x_2^2,  x_2 x_3^{q+m},  x_3^{q+m+1} )\\
 &\subseteq& \p^{n-2}T.
\eeq
\end{proof}

For any element $r  \in T$, let $r^{\star}$ denote the image in $G(\F)$. 
\begin{theorem}
\label{cohen macaulayness of G}
$G(\F)$ is Cohen-Macaulay.
\end{theorem}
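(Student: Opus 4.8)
The plan is to exhibit a homogeneous regular sequence of length two inside the irrelevant maximal ideal $G_+$ of $G(\F)$, which forces $\depth G(\F) \geq 2 = \dim G(\F)$ and hence Cohen--Macaulayness. Since $I_n = \p^{(n)}T$ has colength $(2(q+m)+1)\binom{n+1}{2}$ (Proposition~\ref{main theorem}), a quadratic in $n$, the Rees algebra $\bigoplus_{n\geq 0} I_n$ has dimension $3$ and so $\dim G(\F) = 2$. The candidate regular sequence consists of the initial forms $(x_2^2)^{\star} \in I_1/I_2$, of degree $1$, and $f^{\star} \in I_2/I_3$, of degree $2$, both lying in $G_+$. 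Here I use that $fT = (x_3^{2(q+m)+1})$ in $T$ by \eqref{definition of Ji}, so that $f$ has order exactly $2$ with respect to $\F$, while $x_2^2 \in \p T = I_1$ has order exactly $1$.

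The heart of the argument is the Valabrega--Valla criterion: the initial forms $(x_2^2)^{\star}, f^{\star}$ form a regular sequence in $G(\F)$ if and only if
$$(x_2^2,\, f) \cap I_n = x_2^2 I_{n-1} + f I_{n-2} \qquad \text{for all } n.$$
The inclusion $\supseteq$ is immediate from $I_1 I_{n-1} \subseteq I_n$ and $I_2 I_{n-2} \subseteq I_n$. For $\subseteq$, I would exploit that every ideal in sight is monomial in $T = \kk[x_2,x_3]$: a monomial generator $u$ of $(x_2^2, f) \cap I_n$ is divisible by $x_2^2$ or by $f = \pm x_3^{2(q+m)+1}$, and it suffices to place $u/x_2^2$ in $I_{n-1}$ in the first case and $u/f$ in $I_{n-2}$ in the second. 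Thus the whole identity reduces to the two colon containments
$$I_n : x_2^2 \subseteq I_{n-1}, \qquad I_n : f \subseteq I_{n-2}.$$

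To establish these I would use the explicit decomposition $I_n = \sum_{n_1+2n_2=n} f^{n_2}(\p T)^{n_1}$ together with Proposition~\ref{prop-herzog}, noting that colon by a principal monomial ideal distributes over sums of monomial ideals. Since $f$ is a power of $x_3$, hence coprime to $x_2$, each summand gives $f^{n_2}(\p T)^{n_1} : x_2^2 = f^{n_2}\bigl((\p T)^{n_1} : x_2^2\bigr) \subseteq f^{n_2}(\p T)^{n_1-1} \subseteq I_{n-1}$ by Lemma~\ref{colon of J_1}(\ref{colon of J_1 one}); the degenerate cases $n_1 \in \{0,1\}$ are absorbed because $f^{n_2} \in I_{2n_2}$ and $I_{n-1}$ is an ideal. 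For the second containment, the terms with $n_2 \geq 1$ yield $f^{n_2}(\p T)^{n_1} : f = f^{n_2-1}(\p T)^{n_1} \subseteq I_{n-2}$, while the single term $n_2 = 0$, namely $(\p T)^n : f = (\p T)^n : x_3^{2(q+m)+1}$, is handled directly by Lemma~\ref{colon of J_1}(\ref{colon of J_1 two}), giving $(\p T)^{n-2} \subseteq I_{n-2}$.

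I expect the routine-but-delicate bookkeeping of these summand-wise colons to be the main obstacle: tracking the degenerate small-$n_1$ terms, and confirming that the colon of the sum equals the sum of the colons for these particular monomial ideals. Everything else is formal. Once the displayed identity is in hand, $(x_2^2)^{\star}, f^{\star}$ is a regular sequence of length $2$ contained in $G_+$, so $\depth G(\F) \geq 2 = \dim G(\F)$ and $G(\F)$ is Cohen--Macaulay.
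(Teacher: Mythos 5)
Your proof is correct and follows essentially the same route as the paper: the paper also proves Cohen--Macaulayness by exhibiting the length-two regular sequence $(x_2^2)^{\star}$, $(x_3^{2(q+m)+1})^{\star}=f^{\star}$ in $G(\F)$, and both of your colon containments rest on exactly the same summand-wise application of Lemma~\ref{colon of J_1}. The only difference is organizational: where you invoke Valabrega--Valla together with the monomial decomposition of $(x_2^2,f)\cap I_n$ to reduce everything to the two pure colons $(I_n:x_2^2)\subseteq I_{n-1}$ and $(I_n:f)\subseteq I_{n-2}$, the paper instead checks directly that $x_3^{2(q+m)+1}$ is regular modulo $(x_2^2)^{\star}$ by computing the mixed colon $((I_{n+2}+x_2^2I_n):x_3^{2(q+m)+1})=I_n+x_2^2I_{n-2}$ --- the same content in a different wrapper.
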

\begin{proof} We first show that $(x_2^2)^{\star}$ is a regular element in $G(\F)$.  
We claim that  $(I_n : x_2^2)  = I_{n-1}$ for all $n \geq 1$. 
Clearly $x_2^2 I_{n-1} \subseteq (\p T) I_{n-1} \subseteq I_n$. For the other inclusion, from Lemma~\ref{colon of J_1}(\ref{colon of J_1 one}) we get 
\beq
\label{colon with x_2^2} \nno                                                        
                       (I_n : x_2^2) 
&=&               \sum_{a_1 + 2a_2 = n} (f^{a_2} T)\left(  \p^{a_1} T  :x_2^2 \right) 
\subseteq
 \sum_{a_1 + 2a_2 = n} (f^{a_2} T)(  \p^{a_1-1} T) 
\subseteq 
I_{n-1}.
\eeq
Let $\overline{\hphantom{xx}}$ denote the image in $T/ (x_2^2)$. Then
\beqn
\f{G(\F) }{{(x_2^2)}^{\star}  }
&\cong& \bigoplus_{n \geq 0} \f{I_n}
                                                 { I_{n+1} +{x_2^2} I_{n-1}} = G( \olin{\F}).
                                                 \eeqn
 To  show that $\olin {x_3^{2(q+m)+1}}$  is a regular element in $G(\olin{\F})$, we  need to verify that  
 \beq
 \label{x3 is regular}    ((I_{n+2}  + {x_2^{2} I_{n}) : (x_3^{2(q+m)+1})) } 
 = I_{n} + x_2^2  I_{n-2}.
 \eeq  
 One can verify that 
$               x_3^{2(q+m)+1}  ( I_{n+2}  + x_2^{2} I_{n}) 
\subseteq fT  (I_{n} + x_2^2  I_{n-2}) 
\subseteq I_{n+2}.$ 
For the other inclusion,  for all $n \geq 0$
\beq
\label{x3 is regular computation}\nno
&&                    { \displaystyle                 ((I_{n+2}  + {x_2^2} I_{n}) : (x_3^{2(q+m)+1})) } \\ \nno
 &=&               \sum_{a_1+ 2a_2 = n+2} ( f^{a_2} \p^{a_1}T  : x_3^{2(q+m)+1})
 +                    \sum_{a_1+ 2a_2 = n}  ( {x_2^{2}  f^{a_2} \p^{a_1}T : (x_3^{2(q+m)+1}))}   \\ \nno
 &=&            \sum_{a_1+ 2a_2 = n+2; a_2 \not =0} (  f^{a_2-1} \p^{a_1}T   )
  +                       (  \p^{n+2} T : x_3^{2(q+m)+1}) \\ \nno
&& +                    \sum_{a_1+ 2a_2 = n; a_2 \not = 0}   (x_2^2  f^{a_2-1} \p^{a_1}T  )
 +                       (x_2^2 \p^{n} T : x_3^{2(q+m)+1})   \\ \nno
 &\subseteq & \sum_{a_1+ 2a_2 = n+2; a_2 \not =0} ( f^{a_2-1}  \p^{a_1}T )
                                                                                   + \p^nT 
 +                   \sum_{a_1+ 2a_2 = n; a_2 \not = 0}  (  x_2^2 f^{a_2-1} \p^{a_1} T )
+                x_2^2  \p^{n-2}  T
    \hspace{1.2in} \mbox{[by \eqref{colon of j1 with x3}]} \\ 
    &\subseteq &  I_{n} + x_2^2  I_{n-2}.
\eeq
\end{proof}

As $x_2^{\star} \in [G(\F)]_1$ is a regular element, we can use it to determine $\reg(T/  (I_{n} + x_2^2) )$.

\begin{lemma}
\label{regularity modulo x_2^2}
Let  $n \geq 1$.  Then 
\beqn
\reg\left( \f{T}{I_{n} + ( x_2^2) } \right) 
= 
\begin{cases}
 2r  \left(  q+m + \f{1}{2} \right)  & \mbox{ if } n = 2r,\\
 (2r - 1)  \left(  q+m + \f{1}{2} \right)  - \f{1}{2}& \mbox{ if }  n = 2r - 1    
\end{cases}.
\eeqn
\end{lemma}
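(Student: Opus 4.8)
The goal is to compute $\reg(T/(I_n + (x_2^2)))$ where $T = \kk[x_2,x_3]$ is a two-variable polynomial ring. Since we are working in a polynomial ring in two variables, every quotient $T/J$ by a homogeneous ideal $J$ is governed by its minimal free resolution, which has length at most $2$. The key structural fact available is that $x_2^\star$ is a regular element on $G(\F)$ (Theorem~\ref{cohen macaulayness of G}), and more precisely that $(I_n : x_2^2) = I_{n-1}$, as was extracted inside that proof. This is what lets us pass from $I_n$ to the quotient $I_n + (x_2^2)$ and control everything explicitly.

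**The plan.** The strategy is to make the ideal $I_n + (x_2^2)$ completely explicit modulo $x_2^2$ and then read off its minimal generators and the regularity directly. First I would reduce to understanding the image of $I_n$ in the ring $T/(x_2^2) \cong \kk[x_2,x_3]/(x_2^2)$, using the definition $I_n = \sum_{n_1 + 2n_2 = n} (f^{n_2}T)(\p T)^{n_1}$ from \eqref{definition of Ji} together with $\p T = (x_2^2, x_2 x_3^{q+m}, x_3^{q+m+1})$ and $(f)T = (x_3^{2(q+m)+1})$. Working modulo $x_2^2$, every power $(\p T)^{n_1}$ collapses dramatically: only the terms involving at most one factor of $x_2$ survive, so each summand reduces to a monomial of the shape $x_3^{(2(q+m)+1)a_2} \cdot x_3^{(q+m+1)a_1}$ (pure power of $x_3$) plus $x_2$ times a pure power of $x_3$. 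I would enumerate, over the partitions $n_1 + 2n_2 = n$, the minimal such monomials in the two graded strands (the degree-$0$-in-$x_2$ strand and the degree-$1$-in-$x_2$ strand). The smallest pure $x_3$-power and the smallest $x_2 \cdot x_3^{\,?}$ term give the two generators of $I_n + (x_2^2)$ modulo $x_2^2$, and these together with $x_2^2$ furnish an explicit (complete intersection–like) generating set for $I_n + (x_2^2)$.

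**Computing the regularity.** Once the ideal $T/(I_n + (x_2^2))$ is pinned down explicitly, its resolution over the two-variable ring $T$ can be written by hand; over $\kk[x_2,x_3]$ an ideal of this monomial shape has a short Taylor/Hilbert–Burch type resolution whose twists are immediate. I would compute $\reg$ either as the top nonzero degree of $\operatorname{Tor}$ shifted appropriately, or, since $x_2^\star$ is a regular element and everything is essentially supported on a one-dimensional quotient, by reducing modulo the regular element and reading the Castelnuovo--Mumford regularity as the largest degree in which the Hilbert function fails to be eventually correct — equivalently the socle degree of the (Artinian, after one more generic hyperplane) quotient. The split into the cases $n = 2r$ and $n = 2r-1$ arises precisely because the parity of $n$ changes which partition $n_1 + 2n_2 = n$ minimizes the $x_3$-degree in each strand: for even $n$ the extremal contribution comes from the pure $f$-power $f^{r}$ giving exponent $r(2(q+m)+1)$, whereas for odd $n$ one factor of $\p$ must be spent, shifting the count by the $-\frac12$ correction seen in the formula.

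**Main obstacle.** The genuinely delicate part is the bookkeeping of minimal generators modulo $x_2^2$: I must verify that the monomials I single out are actually minimal (not redundant consequences of $x_2^2$ together with a smaller $x_3$-power) and that no cross-term from a mixed partition produces a smaller generator in either parity strand. In other words, the hard step is proving the exact extremal exponents in the two graded strands, since an off-by-one error there propagates directly into the regularity formula. I expect the cleanest route is to observe that, because $x_2^\star$ is regular on $G(\F)$, the regularity is additive/predictable along the filtration, so $\reg(T/(I_n+(x_2^2)))$ can be computed inductively, comparing $I_n$ to $I_{n-1}$ via the colon relation $(I_n : x_2^2) = I_{n-1}$ and tracking how the top degree increments by $q+m+1$ roughly every time $n$ increases by $2$; matching this increment against the closed form then confirms both parity cases simultaneously.
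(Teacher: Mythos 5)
Your proposal is correct and follows essentially the same route as the paper: pin down $I_n+(x_2^2)$ as an explicit monomial ideal in $\kk[x_2,x_3]$ (a complete intersection $(x_2^2,x_3^{r(2(q+m)+1)})$ when $n=2r$, and a three-generated ideal $(x_2^2,\,x_2x_3^{n(q+m)+r-1},\,x_3^{n(q+m)+r})$ when $n=2r-1$), then read the regularity off the explicit Koszul, respectively Hilbert--Burch, resolution. The only cosmetic difference is that you extract the extremal generators by minimizing the $x_3$-exponent over the partitions $n_1+2n_2=n$ in the two $x_2$-strands, whereas the paper gets them at once from the factorizations $I_{2r}=I_2^{\,r}$ and $I_{2r-1}=I_1 I_{2(r-1)}$ supplied by Theorem~\ref{symbolic power}; your strand bookkeeping does land on exactly the same generators.
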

\begin{proof}
By Theorem~\ref{symbolic power},
\beqn
          I_{2r} + (x_2^2)  
&=& (I_2  +   (x_2^2))^r + (x_2^2)
=      (x_2^2, x_3^{2(q+m) + 1})^r + (x_2^2) 
=      ( x_2^2, x_3^{r(2(q+m) + 1)}).
\eeqn

the minimal free resolution of $T/ (I_{2r} +  (x_2^2))$ is 
{\small
\begin{gather}
\xymatrix@C=10pt{
0 \ar[r] 
&  T[-( r(2(q+m) + 1)+2)]   \ar[rrrrr]^(0.5){
\left( 
\begin{array}{c}
x_3^{ r(2(q+m) + 1) }\\
-x_2^2\\
\end{array} 
\right)
  }
   &&&& & {\begin{array}{c}T[-2]   \\\oplus  \\T[-( r(2(q+m) + 1))]  \end{array}} \ar[r] 
  & T\ar[r]
  & {\displaystyle \f{T}{I_{2r} +  (x_2^2)}} \ar[r]
  & 0
  }
\end{gather}}

\normalsize
This implies that 
\beqn
\reg\left( \f{T}{I_{2r} +  (x_2^2)} \right) = r(2(q+m) + 1)= 2r  \left(  q+m + \f{1}{2} \right).
\eeqn 
Let $n= 2r-1$. Then by Theorem~\ref{symbolic power},
\beqn
        I_{2r-1} + ( x_2^2)
&=& (I_1, x_2^2) (I_{2(r-1)}, x_2^2) + (x_2^2)
=     (x_2^2, x_2 x_3^{ (q+m) } , x_3^{ q+m + 1}) ( x_2^2, x_3^{ (r-1)(2(q+m) + 1)}) + (x_2^2)\\
&=& ( x_2^2, x_2 x_3^{ (2r-1)(q+m) + r-1}, x_3^{ (2r-1)(q+m) + r})\\
&=& ( x_2^2, x_2 x_3^{ n(q+m) + r-1}, x_3^{ n(q+m) + r}).
\eeqn
 By Hilbert-Burch Theorem, the minimal free resolution of $T/ I_{n}$ is 
{\small
\begin{gather}
\xymatrix@C=11pt{
0     \ar[r]  
&   T[ - n(q+m)  - r-1 ]^2 \ar[rrrrrrr]^(0.5){  \left( 
\begin{array}{cc}
0 & x_3^{n(q+m)  + (r-1)}\\
-x_3           & -x_2\\
 x_2 & 0
\end{array} \right)}
&&&&&&& { \begin{array}{c} T[-2]  \\ \oplus \\ T[ - n(q+m)  - r ]^2  \end{array}} 
\ar[r] & T
\ar[r] &  {\displaystyle \f{T}{I_{n} + (x_2^2)}}
\ar[r] & 0.
}
\end{gather}}
\normalsize
Hence $\reg(T/ I_{n} + (x_2^2)) = n(q+m+1) + r-1 = n (q+m+1) - \f{1}{2}$.
\end{proof}

\begin{lemma}
\label{lemma mod x_3}
For all $n \geq 1$, $\reg( T/ I_n + (x_3^{2(q+m) + 1} ) )= 2n + 2(q+m)-3$.
\end{lemma}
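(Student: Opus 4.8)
The plan is to compute $\reg(T/(I_n + (x_3^{2(q+m)+1})))$ by finding the ideal $I_n + (x_3^{2(q+m)+1})$ explicitly as a monomial ideal and then reading off its regularity from a free resolution. The crucial observation is that the generator $f$ of the symbolic power contributes, modulo $x_3^{2(q+m)+1}$, nothing new: since $fT = (x_3^{2(q+m)+1})$ by \eqref{definition of Ji}, we have $f \in (x_3^{2(q+m)+1})T$, so every term $f^{n_2}(\p T)^{n_1}$ with $n_2 \geq 1$ already lies in $(x_3^{2(q+m)+1})$. Therefore, working modulo $x_3^{2(q+m)+1}$, only the pure power term $(\p T)^n$ survives, and I expect
$$
I_n + (x_3^{2(q+m)+1}) = (\p T)^n + (x_3^{2(q+m)+1}).
$$

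First I would compute $(\p T)^n$ explicitly. Since $\p T = (x_2^2, x_2 x_3^{q+m}, x_3^{q+m+1})$, the $n$-th power is generated by the monomials $x_2^{2(n-i)} x_3^{(q+m)i}(x_2,x_3)^i$ for $i = 0, \ldots, n$ (this is exactly the description already used in Lemma~\ref{colon of J_1}). Next I would reduce modulo $x_3^{2(q+m)+1}$: any generator whose $x_3$-exponent is at least $2(q+m)+1$ becomes redundant. A short exponent bookkeeping should show that $I_n + (x_3^{2(q+m)+1})$ has a clean minimal monomial generating set. I then would write down a minimal free resolution (the quotient is, after adding the single extra generator $x_3^{2(q+m)+1}$, an ideal in the two-variable ring $T = \kk[x_2,x_3]$, so the resolution is short — length at most two by Hilbert–Burch, as in Lemma~\ref{regularity modulo x_2^2}). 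Reading off the top graded shifts then yields the regularity.

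The arithmetic to confirm is that the largest shift minus its homological degree equals $2n + 2(q+m) - 3$. I would match this against the degrees of the surviving generators: the generator $x_3^{2(q+m)+1}$ has degree $2(q+m)+1$, while the highest-degree surviving generator of $(\p T)^n$ should be of the form $x_2 x_3^{\,2(q+m)+(n-1)(q+m)+\cdots}$, and the syzygies push the top shift up by the appropriate amount. The main obstacle will be determining precisely which generators of $(\p T)^n$ survive the reduction modulo $x_3^{2(q+m)+1}$ and organizing them so that the resolution can be written down cleanly; in particular I must be careful about which generators $x_2^{2(n-i)}x_3^{(q+m)i+j}$ (with $0 \le j \le i$) have $x_3$-exponent below the threshold $2(q+m)+1$, since those are the ones that remain minimal generators, and about how the extra relation $x_3^{2(q+m)+1}$ interacts with them to produce the top syzygy. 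Once the generating set is pinned down, extracting $\reg = 2n + 2(q+m) - 3$ is a direct computation from the resolution.
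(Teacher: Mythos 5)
Your plan is correct and is essentially the paper's own proof: since $fT=(x_3^{2(q+m)+1})$, one gets $I_n+(x_3^{2(q+m)+1})=(\p T)^n+(x_3^{2(q+m)+1})$, which reduces to the monomial ideal $(x_2^{2n},\,x_2^{2n-1}x_3^{q+m},\,x_2^{2n-2}x_3^{q+m+1},\,x_3^{2(q+m)+1})$, and a Hilbert--Burch resolution then gives $\reg=2n+2(q+m)-3$. Just note that your guess for the top surviving generator is off --- every generator of $(\p T)^n$ with $x_3$-exponent $\geq q+m+2$ is already divisible by $x_2^{2n-2}x_3^{q+m+1}$ or by $x_3^{2(q+m)+1}$, so only the three low-$x_3$ generators remain --- but this is exactly the bookkeeping you flagged as the remaining work.
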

\begin{proof}
As $(fT) = (x_3^{2(q+m) + 1} ) $, from Theorem~\ref{symbolic power}, for all $n \geq 1$,
\beqn
I_n + (x_3^{2(q+m) + 1} )
&=& (\p T)^n + (x_3^{2(q+m) + 1})
=
 (x_2^{2n}, x_2^{2n-1} x_3^{q+m}, x_2^{2n-2} x_3^{q+m+1}, x_3^{2(q+m) + 1}).  
\eeqn
 By Hilbert-Burch Theorem the minimal free resolution of $ I_n + (x_3^{2(q+m) + 1} )$ is 
{\small
$$
\xymatrix@C=11pt{
0     \ar[r]  
&  {\begin{array}{c}  T[ - (2n + q+m) ]^2 \\     \oplus  \\T[ -( 2n -1+ 2(q+m) )]     \end{array}}
\ar[rrrrr]^(0.5){  \left( 
\begin{array}{cccc}
0 & x_3^{q+m} & 0\\
-x_3               & -x_2 &  0\\
x_2  &      0      & -x_3^{q+m}\\
0 & 0 & x_2^{2n-2}
\end{array} \right)}
&&&&& { \begin{array}{c} T[-(2n)]  \\ \oplus \\ T[ - (2n-1 + q+m) ]^2  \\  \oplus \\ T[- (   2(q+m) + 1)]\end{array}} 
\ar[r] & T
\ar[r] &  {\displaystyle \f{T}{I_{n} + (x_3^{2(q+m) + 1 })}}
\ar[r] & 0.
}
$$}
Hence $\reg( T/ I_n + (x_3^{2(q+m) + 1} ) )= 2n + 2(q+m)-3$. 
\end{proof}

We now use the fact that $({x_3^{2(q+m) + 1}})^{\star} \in [G(\F)]_2 $ is a regular
  element   to compute $\reg(T/ I_n)$. 

\begin{proposition}
\label{prop regularity}
Let  $n \geq 1$. Then
\beqn
  \reg \left( \f{T}{I_{n} }\right) 
= \begin{cases} 2r \left(  q+m  + \f{1}{2} \right) & \mbox{ if } n = 2r,\\
(2r - 1)  \left(  q+m + \f{1}{2} \right)  - \f{1}{2}& \mbox{ if }  n = 2r-1    . 
\end{cases}
\eeqn
\end{proposition}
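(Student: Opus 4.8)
The plan is to exploit the graded ring $G(\F) = \bigoplus_{n \geq 0} I_n/I_{n+1}$ together with the two regular elements we have already identified. From Theorem~\ref{cohen macaulayness of G} we know $G(\F)$ is Cohen-Macaulay, and in its proof we verified that $(x_2^2)^{\star} \in [G(\F)]_1$ is a regular element and that $(x_3^{2(q+m)+1})^{\star} \in [G(\F)]_2$ is regular on $G(\olin{\F})$. These two facts are precisely what one needs to reduce the computation of $\reg(T/I_n)$ to the two ``modulo a regular element'' computations carried out in Lemma~\ref{regularity modulo x_2^2} and Lemma~\ref{lemma mod x_3}. So the guiding idea is: \emph{filtration-regularity propagates back to regularity of the individual quotients $T/I_n$}.

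First I would set up the comparison between $\reg(T/I_n)$ and the associated graded data. The standard mechanism is a short exact sequence. Since $x_2^2$ multiplies $I_{n-1}$ into $I_n$ and, by the colon computation $(I_n : x_2^2) = I_{n-1}$ established inside the proof of Theorem~\ref{cohen macaulayness of G}, we get the short exact sequence
\begin{equation}
\label{prop-reg-ses-one}
0 \longrightarrow \f{T}{I_{n-1}}\bigl[-1\bigr] \xrightarrow{\; \cdot x_2^2 \;} \f{T}{I_n} \longrightarrow \f{T}{I_n + (x_2^2)} \longrightarrow 0,
\end{equation}
where the degree shift records that $x_2^2$ has degree $2$ (so the appropriate shift is $[-2]$; I would pin down the exact shift when writing the details). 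Applying the standard fact that $\reg$ of the middle term is bounded by the max of the regularities of the two outer terms (with shift), together with the value of $\reg(T/(I_n + (x_2^2)))$ supplied by Lemma~\ref{regularity modulo x_2^2}, sets up an inductive inequality. The reverse inequality, needed to force equality, is exactly where Cohen-Macaulayness of $G(\F)$ pays off: regularity of a filtered module and of its associated graded agree when the relevant initial forms are regular, so the bound from \eqref{prop-reg-ses-one} is sharp.

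The cleanest route is induction on $n$. I would treat the even and odd cases in parallel, carrying the claimed formula as the induction hypothesis. For $n = 2r$, the hypothesis at $n-1 = 2r-1$ gives $\reg(T/I_{n-1})$, and combining its shift with $\reg(T/(I_n+(x_2^2))) = 2r(q+m+\tfrac12)$ from Lemma~\ref{regularity modulo x_2^2} should reproduce $2r(q+m+\tfrac12)$; the arithmetic has to confirm that the $x_2^2$-quotient term dominates. For $n = 2r-1$ the same sequence with the odd value of Lemma~\ref{regularity modulo x_2^2} yields the $-\tfrac12$ correction. The base case $n=1$ can be read directly from Lemma~\ref{regularity modulo x_2^2} (or from $\reg(T/\p T)$ computed via the Hilbert--Burch resolution in Lemma~\ref{cohen-macaulay}), and I would verify it matches the formula at $r=1$.

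The main obstacle I anticipate is making the regularity comparison an \emph{equality} rather than merely an inequality. The short exact sequence \eqref{prop-reg-ses-one} only gives $\reg(T/I_n) \leq \max\{\reg(T/I_{n-1})+2,\ \reg(T/(I_n+(x_2^2)))\}$, and in general regularity can drop under such sequences. The way around this is to invoke the Cohen--Macaulayness of $G(\F)$: because $(x_2^2)^{\star}$ is a nonzerodivisor on $G(\F)$, no spurious cancellation occurs in the long exact sequence of local cohomology, which pins the top nonvanishing cohomological degree exactly. Concretely I would argue that the connecting maps in local cohomology vanish in the critical degree, forcing the inequality to be tight, so that $\reg(T/I_n)$ equals the value predicted by Lemma~\ref{regularity modulo x_2^2}. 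An alternative, perhaps more transparent, presentation is to pass to $\reg G(\F)$ directly using the two regular elements $x_2^2$ and $x_3^{2(q+m)+1}$ to reduce $G(\F)$ modulo a regular sequence and then read off $\reg(T/I_n)$ degree-by-degree from the resulting Artinian quotient. I expect both routes to work, and I would choose whichever keeps the even/odd bookkeeping least error-prone.
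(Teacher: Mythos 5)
Your proposal is correct and follows the same basic architecture as the paper --- a short exact sequence coming from a filtration-regular initial form, an auxiliary regularity computation modulo that element, and induction --- but you run the entire induction through the single element $x_2^2$, whereas the paper splits by parity: for $n=2r-1$ it uses exactly your sequence $0 \to (T/I_{2r-2})[-2] \to T/I_{2r-1} \to T/(I_{2r-1}+(x_2^2)) \to 0$ together with Lemma~\ref{regularity modulo x_2^2}, while for $n=2r$ it instead colons by $x_3^{2(q+m)+1}$ (which is regular on $G(\olin{\F})$, with degree shift $2(q+m)+1$) and feeds in Lemma~\ref{lemma mod x_3}, stepping from $I_{2r-2}$ to $I_{2r}$. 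Your uniform route does work: the colon identity $(I_n:x_2^2)=I_{n-1}$ holds for all $n\ge 1$, and the arithmetic you defer checks out --- for $n=2r$ you need $(2r-1)\left(q+m+\f{1}{2}\right)-\f{1}{2}+2 \le 2r\left(q+m+\f{1}{2}\right)$, i.e.\ $q+m\ge 1$, and for $n=2r-1$ you need $(2r-2)\left(q+m+\f{1}{2}\right)+2\le (2r-1)\left(q+m+\f{1}{2}\right)-\f{1}{2}$, i.e.\ $q+m\ge 2$, both of which hold since $q,m\ge 1$. The one place to tighten is your justification that $\reg(T/I_n)\le\max\{\reg(T/I_{n-1})+2,\ \reg(T/(I_n+(x_2^2)))\}$ is an equality: the clean reason is not a connecting-map argument via the Cohen--Macaulayness of $G(\F)$ (that property is what gives you exactness of the sequence, i.e.\ the colon identity), but simply that every module in the sequence has finite length --- each $I_n$ contains the $(x_2,x_3)$-primary ideal $\p^nT$ in $T=\kk[x_2,x_3]$ --- so regularity is just the top nonvanishing degree and the max formula is exact; this is precisely the remark the paper makes (``as all the modules are Artinian''), and it is also where your alternative ``read it off degree-by-degree from the Artinian quotient'' route lands. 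With that substitution your argument is complete.
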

\begin{proof} Let $n=2r$. We prove the Proposition by induction on $r$. If $r=1$, then the result follows from Lemma~\ref{lemma mod x_3}.
Let $r>1$. 
By Theorem~\ref{cohen macaulayness of G},
  $({x_3^{2(q+m) + 1}})^{\star}$ is a regular
  element   in $G(\F)$.
  Hence,   we have the exact sequence,
\begin{gather}
\begin{gathered}
\label{ses and reg 0}
\xymatrix@C=14pt{
     0 \ar[r]  
     &      {\displaystyle \f{T}{I_{2r-2}}[-2(q+m) -1]} \ar[rrr]^(0.7){.x_3^{2(q+m) + 1}}  
     & & &{ \displaystyle \f{T}{ I_{2r} } }\ar[r]  
     & {\displaystyle \f{T}{ I_{2r} + (x_3^{2(q+m) + 1} ) } }\ar[r]
     .& 0
}.
\end{gathered}
\end{gather}
Then from 
the exact sequence (\ref{ses and reg 0}) 
we get
\beqn
\reg\left(\f{T}{I_{2r}}\right)
&=& \max \left\{    \reg \left(\f{T}{I_{2r-2}} \right) +2(q+m)+1  ,    \reg\left(\f{T}{I_{2r}  + (x_3^{2(q+m) + 1})}\right)     \right\}\\
&=& \max \left\{ (2r-2) \left(q+m+ \f{1}{2}\right) + 2(q+m + \f{1}{2}), 4r + 2(q+m) -3 \right\}
      \hspace{.5in} \mbox{[Lemma~\ref{lemma mod x_3}]}\\
&=& \max \left\{ 2r \left(q+m+ \f{1}{2}\right) , 4r + 2(q+m) -3 \right\}\\
&=& 2r \left(q+m+ \f{1}{2}\right).
\eeqn

Let $n=2r- 1$ and $r \geq 1$. If $r=1$, then the result follows from Corollary~\ref{regularity modulo x_2^2}. Let $r>1$. 
 As $({x_2^2})^{\star}$ is a nonzerodivisor in $G(\F)$,  we have the exact sequence
\beq
\label{ses and reg}
       0
 \lrar \f{T}{I_{2r-2}}[-2]
 \sta{.x_2^2}{\lrar}  \f{T}{I_{2r - 1}}
\lrar  \f{T}{I_{2r - 1} + (x_2^2)}\lrar 0.
\eeq
As all the modules in  \eqref{ses and reg} are Artinian,  
\beqn
\reg(T/I_{2r  - 1}) 
&=& \max \{ \reg(    T/ (I_{2r-2})[-2] ) , \reg( T/ ( I_{2r - 1} + (x_2^2) ))  \}\\
&=& \max \{ (2r-2) \left(q+m + \f{1}{2} \right) +2, (2r - 1) \left(q+m + \f{1}{2}\right)  - \f{1}{2}\}\\
&=& (2r - 1) \left(q+m + \f{1}{2} \right) - \f{1}{2}. 
\eeqn
\end{proof}

\begin{theorem}
\label{theorem-regularity}
Let  $n \geq 1$. Then  ${\displaystyle \reg\left( \f{R}{ \p^{(n) } } \right) = n( e(R/ \p) /2 ) + \theta}$ where 
\beqn
\theta = \begin{cases}
0 & \mbox {if } n \mbox { is even}\\
-1/2  & \mbox {if } n \mbox { is odd}. 
\end{cases}
\eeqn
\end{theorem}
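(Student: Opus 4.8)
The plan is to recognize that all the substantive work has already been carried out in the preceding results, so that this theorem reduces to a clean restatement together with one arithmetic substitution. First I would invoke Lemma~\ref{reg comparision}, which (using that $x_1, x_4$ is a regular sequence on $R/\p$, via Chardin's result on regularity under quotients by regular sequences) identifies $\reg(R/\p^{(n)})$ with $\reg(T/I_n)$, where $T = \kk[x_2, x_3]$ and $I_n$ is the monomial ideal from \eqref{definition of Ji}. This moves the entire computation into the explicit two-variable monomial setting.

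Next I would appeal directly to Proposition~\ref{prop regularity}, which supplies the closed form for $\reg(T/I_n)$ in the two parities: it equals $2r\left(q+m+\f12\right)$ when $n = 2r$, and $(2r-1)\left(q+m+\f12\right) - \f12$ when $n = 2r-1$. Combining this with Lemma~\ref{reg comparision} immediately gives $\reg(R/\p^{(n)})$ in both cases.

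Finally I would translate the answer into the multiplicity language of the statement. By Lemma~\ref{cohen-macaulay}(\ref{corollary multiplicity}) we have $e(R/\p) = 2(q+m)+1$, so that $q+m+\f12 = e(R/\p)/2$. Substituting, the even case $n=2r$ becomes $n\cdot e(R/\p)/2$, i.e.\ $\theta = 0$, while the odd case $n=2r-1$ becomes $n\cdot e(R/\p)/2 - \f12$, i.e.\ $\theta = -\f12$, which is exactly the claimed formula.

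Since the genuine difficulty — proving the Cohen-Macaulayness of $G(\F)$ in Theorem~\ref{cohen macaulayness of G} and running the short exact sequence inductions against the regular elements $(x_2^2)^{\star}$ and $(x_3^{2(q+m)+1})^{\star}$ — has already been dispatched inside Proposition~\ref{prop regularity}, I do not expect any real obstacle at this stage. The only points requiring care are keeping the two parities consistent and verifying the identity $q+m+\f12 = e(R/\p)/2$; there is nothing deeper to establish, so this theorem is essentially a repackaging of Proposition~\ref{prop regularity}.
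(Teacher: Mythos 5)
Your proposal is correct and matches the paper's own proof exactly: both combine Lemma~\ref{reg comparision} with Proposition~\ref{prop regularity} and then substitute $e(R/\p) = 2(q+m)+1$ to obtain the stated formula in each parity. Nothing further is needed.
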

\begin{proof}
From Lemma~\ref{reg comparision} and Proposition~\ref{prop regularity} we get
\beqn
  \reg \left( \f{R}{ \p^{(n) } } \right) 
 = \begin{cases} 2r \left(  q+m  + \f{1}{2} \right) & \mbox{ if } n = 2r,\\
(2r - 1)  \left(  q+m + \f{1}{2} \right)  - \f{1}{2}& \mbox{ if }  n = 2r-1    . 
\end{cases}
\eeqn
Since $e(R/ \p) = 2q  + 1 + 2m$, the result follows.
\end{proof}

As an immediate corollary we have:
\begin{corollary}
\label{limit regularity}
 $\limnn{\displaystyle \f{\reg\left( \f{R}{ \p^{(n) } }\right)}{n} = \f{e(R/ \p) }{2}}$.
\end{corollary}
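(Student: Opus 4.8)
The plan is to read the result off directly from the explicit formula established in Theorem~\ref{theorem-regularity}, so the only genuine content is an elementary limit computation; since all the structural work is done upstream, I expect no real obstacle at this stage.

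First I would invoke Theorem~\ref{theorem-regularity}, which asserts that for all $n \geq 1$ one has $\reg(R/\p^{(n)}) = n\bigl(e(R/\p)/2\bigr) + \theta$, where $\theta = 0$ when $n$ is even and $\theta = -1/2$ when $n$ is odd. The crucial observation is that this expression cleanly separates a term linear in $n$ with slope $e(R/\p)/2$ from a correction term $\theta$ whose absolute value is bounded by $1/2$, uniformly in $n$. Dividing by $n$ then gives
$$
\f{\reg(R/\p^{(n)})}{n} = \f{e(R/\p)}{2} + \f{\theta}{n}.
$$
Because $|\theta| \leq 1/2$ is independent of $n$, the remainder $\theta/n$ tends to $0$ as $n \to \infty$, and taking the limit on both sides yields $\limnn \reg(R/\p^{(n)})/n = e(R/\p)/2$, as claimed.

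The substantive difficulty in this circle of ideas lies entirely before this point: in establishing the Cohen-Macaulayness of the associated graded ring $G(\F)$ (Theorem~\ref{cohen macaulayness of G}), which supplies the regular elements $(x_2^2)^{\star}$ and $(x_3^{2(q+m)+1})^{\star}$, and in the short exact sequence arguments of Proposition~\ref{prop regularity} that convert the mod-$x_2^2$ and mod-$x_3^{2(q+m)+1}$ computations into the exact regularity formula. Once Theorem~\ref{theorem-regularity} is available, the corollary is immediate, and the parity-dependent constant $\theta$ disappears asymptotically precisely because it is bounded; no finer control of the error term is needed for the limit statement.
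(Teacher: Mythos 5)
Your proposal is correct and matches the paper's intent exactly: the paper states the corollary as an immediate consequence of Theorem~\ref{theorem-regularity}, and your computation (dividing the formula $n(e(R/\p)/2)+\theta$ by $n$ and using $|\theta|\leq 1/2$) is precisely the argument being left implicit.
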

 
 \subsection{Comparing invariants}\hfill\\
In this subsection  we compare the various invariants. We verify that Theorem~1.2.1(b) does not always hold true if the scheme defined by ideal $I$ is not zero-dimensional.

\begin{lemma}
${\displaystyle \rho(\p) \geq \f{\alpha(\p)}{ {\gamma}(\p) }}$
\end{lemma}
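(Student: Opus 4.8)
The plan is to establish the inequality $\rho(\p) \geq \alpha(\p)/\gamma(\p)$ by simply substituting the explicit values of each of the three invariants that have already been computed in the preceding results, and verifying that the resulting numerical inequality holds for all admissible $q,m \geq 1$. This is the cleanest route since every quantity appearing in the statement has been pinned down earlier in Section~6.

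First I would recall the values. By Theorem~\ref{theorem-waldschmidt} we have $\alpha(\p) = 2$ and $\gamma(\p) = 2$, so that $\alpha(\p)/\gamma(\p) = 1$. By Theorem~\ref{theorem-resurgence}, $\rho(\p) = \frac{e(R/\p)-1}{e(R/\p)-2}$, where $e(R/\p) = 2(q+m)+1 = d + 2m \geq 5$ since $q,m \geq 1$. The inequality to prove therefore reduces to
\[
\frac{e(R/\p)-1}{e(R/\p)-2} \geq 1.
\]
Writing $N := e(R/\p) - 2 \geq 3 > 0$, this is the statement $\frac{N+1}{N} \geq 1$, i.e. $N + 1 \geq N$, which is immediate. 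Equivalently, since $e(R/\p) - 1 > e(R/\p) - 2 > 0$, the quotient exceeds $1$ strictly, so in particular $\rho(\p) \geq 1 = \alpha(\p)/\gamma(\p)$.

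There is essentially no obstacle here; the only point requiring a word of care is to confirm the denominator $e(R/\p) - 2$ is positive (so that dividing preserves the inequality direction), which follows from $e(R/\p) = 2(q+m)+1 \geq 5$ for $q,m \geq 1$. I would present the proof as a short computation invoking Theorem~\ref{theorem-resurgence} and Theorem~\ref{theorem-waldschmidt}. Note that this lemma records that the general lower bound $\alpha(I)/\gamma(I) \leq \rho(I)$ from \cite[Theorem~1.2.1]{BH} does hold for $\p$; the interest of the surrounding discussion is rather that the upper bound in part~(b) of that result, valid for zero-dimensional schemes, can fail in the present (one-dimensional) setting, which is what the next result addresses.
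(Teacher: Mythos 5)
Your proof is correct and follows essentially the same route as the paper: both reduce to noting $\alpha(\p)/\gamma(\p)=2/2=1$ via Theorem~\ref{theorem-waldschmidt} and that $\rho(\p)\geq 1$ (the paper cites this general fact directly, while you derive it from the explicit formula in Theorem~\ref{theorem-resurgence}, which gives the slightly stronger strict inequality).
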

\begin{proof} As ${\alpha(\p)}/{  \gamma(\p)} = 1$ and $\rho(\p) \geq 1$ the result follows. 
\end{proof}

\begin{lemma}
\label{negative result}
If $q=m=1$, then $\rho(\p) \geq \reg(\p)/ {\gamma}(\p) $.
If either $q >1$ or $m>1$, then  $\rho(\p) < \reg(\p)/ {\gamma}(\p) $.
\end{lemma}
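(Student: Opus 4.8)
The plan is to collapse the statement into one elementary inequality in $s:=q+m$ by feeding in the closed forms already computed for the three invariants. First I would record the values. By Theorem~\ref{theorem-resurgence}, together with $e(R/\p)=2(q+m)+1$, one has $\rho(\p)=\f{2(q+m)}{2(q+m)-1}$. By Theorem~\ref{theorem-waldschmidt}, $\gamma(\p)=2$. Finally, Theorem~\ref{theorem-regularity} at $n=1$ supplies the regularity value $\reg(\p)=q+m$. Consequently $\reg(\p)/\gamma(\p)=\f{q+m}{2}$.

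With these substitutions the comparison becomes purely numerical. Writing $s=q+m$, I would compare $\rho(\p)=\f{2s}{2s-1}$ against $\reg(\p)/\gamma(\p)=\f{s}{2}$. Since $s\geq 2$ forces $2s-1>0$, multiplying the target inequality $\rho(\p)<\reg(\p)/\gamma(\p)$ through by the positive quantity $2(2s-1)$ turns it into $4s<s(2s-1)$, and dividing by $s>0$ leaves simply $4<2s-1$, i.e. $2s>5$. Thus the strict inequality $\rho(\p)<\reg(\p)/\gamma(\p)$ holds exactly when $s\geq 3$, while for $s\leq 2$ the opposite (non-strict) inequality $\rho(\p)\geq\reg(\p)/\gamma(\p)$ holds.

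It then remains to translate the threshold $2s=5$ back into the hypotheses on $q,m$. Because $q,m\geq 1$, the value $s=q+m=2$ occurs precisely when $q=m=1$; there $2s=4<5$, so $\rho(\p)\geq\reg(\p)/\gamma(\p)$ (in fact $\f{4}{3}>1$ strictly). If instead $q>1$ or $m>1$, then $s\geq 3$ and $2s\geq 6>5$, so $\rho(\p)<\reg(\p)/\gamma(\p)$. These two cases exhaust all admissible pairs $(q,m)$ and give both assertions.

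I do not anticipate a genuine obstacle, as every input is already established; the only point needing care is pinning down the correct regularity value to use and verifying that the crossover of the two rational functions of $s$ falls strictly between $s=2$ and $s=3$, so that the resulting dichotomy matches the stated split into the cases $q=m=1$ and ``$q>1$ or $m>1$''.
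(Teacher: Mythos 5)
Your proposal is correct and follows essentially the same route as the paper: substitute the closed forms $\rho(\p)=\tfrac{2(q+m)}{2(q+m)-1}$, $\gamma(\p)=2$ and the regularity value $q+m$, then reduce to the sign of $5-2(q+m)$. The only cosmetic difference is that you extract the regularity from Theorem~\ref{theorem-regularity} at $n=1$ while the paper reads it off the minimal free resolution \eqref{minimal free resolution}; note that both you and the paper are implicitly using $q+m=\reg(R/\p)$ as the value of ``$\reg(\p)$'' in the statement, a convention worth making explicit since using $\reg(\p)=q+m+1$ would change the $q=m=1$ case.
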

\begin{proof} From (\ref{minimal free resolution}) it follows that $\reg(R/ \p) = q+m$. Hence
\beqn
\rho(\p) - \f{\reg(\p)}{ {\gamma}(\p) }
= \f{2q + 2m }{2q+ 2m -1} - \f{q+m}{2}
= \f{(q+m)(5- 2q-2m)}{2(2q+2m-1)}.
\eeqn
If $q=m=1$, then $5-2q-2m=1$. If either $q>1$ or $m>1$, then $5- 2q-2m<0$.
\end{proof}


\begin{thebibliography} {BB}

\bibitem
{BDHKKASS}
T.~Bauer,  S.~ Di Rocco, B.~ Harbourne, M.~ Kapustka, A.~ Knutsen, W.~ Syzdek and T.~Szemberg, 
{\em A primer on Seshadri constants.} Interactions of classical and numerical algebraic geometry, 33-70, 
Contemp. Math., {\bf 496}, Amer. Math. Soc., Providence, RI, 2009. 

\bibitem
{BH0}
C.~Bocci and  B.~Harbourne, {\em  The resurgence of ideals of points and the containment problem.}  Proc. Amer. Math. Soc. {\bf 138} (2010), no. 4, 1175-1190. 

 \bibitem
 {BH} 
C. Bocci and B.~Harbourne, {\em Comparing powers and symbolic powers of ideals}, J. Algebraic Geom. {\bf 19} (2010), no. 3, 399-417.

 \bibitem
 {bocco-waldschmidt}
 C.~Bocci,  S.~Cooper,  E.~Guardo, B.~Harbourne, M.~Janssen,  U.~Nagel, A.~Seceleanu, A. ~Van Tuyl and Thanh Vu, {\em   The Waldschmidt constant for squarefree monomial ideals.} J. Algebraic Combin. {\bf 44} (2016), no. 4, 875-904.

\bibitem 
{chardin}
M.~Chardin, 
{\em Some results and questions on Castelnuovo-Mumford regularity.} Syzygies and Hilbert functions,  1-40, 
Lect. Notes Pure Appl. Math., {\bf 254}, Chapman \& Hall/CRC, Boca Raton, FL, 2007. 

\bibitem
{cutkosky1}
 D.~Cutkosky, J.~ Herzog and N.~V.~ Trung, {\em Asymptotic behaviour of the Castelnuovo-Mumford regularity.}  Compositio Math. {\bf 118} (1999), no. 3, 243-261.
 
\bibitem
{cutkosky2}
D. Cutkosky,
{\em
Irrational asymptotic behaviour of Castelnuovo-Mumford regularity.} 
J. Reine Angew. Math. {\bf 522} (2000), 93-103. 


\bibitem
{duminicki}
M.~Dumnicki, B.~ Harbourne, U.~ Nagel, A.~ Seceleanu, T.~ Szemberg and  H.~Tutaj-Gasi$\acute{n}$ska, {\em  Resurgences for ideals of special point configurations in $\P^{N}$ coming from hyperplane arrangements.} J. Algebra {\bf 443} (2015), 383-394.



 \bibitem
 {ene-herzog} 
 V. Ene and J. Herzog, {\em Gr\"{o}bner bases in commutative algebra.} Graduate Studies in Mathematics, {\bf 130} American Mathematical Society, (2012). 
 
 \bibitem
 {ein-laz-smith}
L.~Ein,  R.~Lazarsfeld, and K.~E.~ Smith,  {\em Uniform bounds and symbolic powers on smooth varieties}. Invent. Math. {\bf 144} (2001), no. 2, 241-252.
 
  \bibitem
  {fatabbi}
 G.~ Fatabbi, Giuliana, B.~Harbourne and 
A.~Lorenzini, {\em Inductively computable unions of fat linear subspaces.} 
J. Pure Appl. Algebra {\bf 219} (2015), no. 12, 5413-5425.

\bibitem
{guardo}
E.~Guardo, B.~Harbourne and A.~ Van Tuyl, {\em Asymptotic resurgences for ideals of positive dimensional subschemes of projective space.} Adv. Math. 246 (2013), 114-127.

\bibitem
{harb-huneke} 
B.~Harbourne and C.~Huneke,  {\em Are symbolic powers highly evolved?}
J. Ramanujan Math. Soc. {\bf 28A} (2013), 247-266. 
 
 \bibitem
 {hochster-huneke}
 M.~Hochster  and  C.~Huneke,
{\em Comparison of symbolic and ordinary powers of ideals}. 
Invent. Math. {\bf 147} (2002), no. 2, 349-369. 

 \bibitem
{hio} 
M.~ Herrmann,  S. Ikeda and U.  Orbanz, {\em  Equimultiplicity and blowing up. An algebraic study. With an appendix by B. Moonen. }(1988 ) Springer-Verlag, Berlin. 
 
\bibitem
  {huneke-symm-rees-alg}
C.~Huneke, {\em  On the Symmetric and Rees Algebra of an Ideal Generated by a $d$-sequence.}
  J. Algebra {\bf 62} (1980), no. 2, 268-275.

\bibitem
{huneke-d}
C.~Huneke,
{\em The theory of d-sequences and powers of ideals.}
Adv. in Math. {\bf 46} (1982), no. 3, 249-279.


\bibitem
{morales}
M.~Morales,
 {\em  Syzygies of monomial curves and a linear diophantine problem of Frobenius}. Max-Planck-Institut f$\ddot{u}$r Mathematik, 
 1987.
 
 \bibitem
 {morales-simis}
 M.~Morales and A.~Simis,
 Arithmetically Cohen-Macaulay monomial curves in $\P^3$. Comm. Algebra {\bf 21} (1993), no. 3, 951-961. 
 
 
 \bibitem
 {schenzel}
 P.~Schenzel, 
 Examples of Gorenstein domains and symbolic powers of monomial space curves. J. Pure Appl. Algebra 71 (1991), no. 2-3, 297-311.
 
 \bibitem
 {waldschmidt}
M.~  Waldschmidt, {\em  Propri$\acute{e}$t$\acute{e}$s arithm$\acute{e}$tiques de fonctions de plusieurs variables. II.} (French) S$\acute{e}$minaire Pierre Lelong (Analyse) ann$\acute{e}$e 1975/76, pp. 108?135. Lecture Notes in Math., Vol. 578, Springer, Berlin, 1977
 
   \end{thebibliography}
\end{document}